\newcommand{\hd}{H^\text{sol}}
\newtheorem{theorem}{Theorem}[]
\newtheorem{lemma}{Lemma}[]
\newtheorem{proposition}[theorem]{Proposition}
\newtheorem{remark}{Remark}[]
\def \Cm {\mathbb{C}}
\def \Dm {M}
\def \Rm {\mathbb{R}}
\def \Sm {\mathbb{S}}
\def \Zm {\mathbb{Z}}
\newcommand      {\abar}        {{ \overline a}}
\def\B{\mathcal{B}}
\def\C{\mathcal{C}}
\def\G{\mathcal{G}}
\def\I{\mathcal{I}}
\def\O{\mathcal{O}}
\def\SS{\mathcal{S}}
\def\V{\mathcal{V}}
\def\Bh{\overset{\rightharpoonup}{\vphantom{a}\smash{\mathcal{B}}}}
\def\Ba{\overset{\leftharpoonup}{\vphantom{a}\smash{\mathcal{B}}}}
\def\Dh{\overset{\rightharpoonup}{\vphantom{a}\smash{\mathcal{D}}}}
\def\Da{\overset{\leftharpoonup}{\vphantom{a}\smash{\mathcal{D}}}}
\def\uh{\overset{\rightharpoonup}{\vphantom{a}\smash{u}}}
\def\ua{\overset{\leftharpoonup}{\vphantom{a}\smash{u}}}
\def\vh{\overset{\rightharpoonup}{\vphantom{a}\smash{v}}}
\newcommand{\cout}[1]{}
\newcommand{\x}{\mathrm{x}}
\newcommand{\sgn}[1]{\,{\rm sign}(#1)}
\newcommand{\dprod}[2]{\left\langle{#1},{#2}\right\rangle}
\newcommand{\dbar}{\overline{\partial}}
\newcommand{\zbar}{\overline{z}}
\title{Efficient tensor tomography in fan-beam coordinates. \\ II: Attenuated transforms}
\author{Fran\c{c}ois Monard \thanks{Department of Mathematics, University of California Santa Cruz, 1156 High Street, Santa Cruz, CA 95064. Email: fmonard@ucsc.edu} \thanks{Partial support from NSF grant DMS-1712790 is gratefully acknowledged. The author thanks the anonymous referees for valuable comments.}}
\begin{document}
\maketitle

\begin{abstract}
    This article extends the author's past work \cite{Monard2015a} to attenuated X-ray transforms, where the attenuation is complex-valued and only depends on position. We give a positive and constructive answer to the attenuated tensor tomography problem on the Euclidean unit disc in fan-beam coordinates. For a tensor of arbitrary order, we propose an equivalent tensor of the same order which can be uniquely and stably reconstructed from its attenuated transform, as well as an explicit and efficient procedure to do so.  
\end{abstract}

\section{Introduction}

We present a sequel to \cite{Monard2015a}, concerned with the reconstruction of tensor fields from their X-ray transform, to the case of transforms with attenuation. Let $M = \{ \x=(x,y) \in \Rm^2,\ x^2+y^2\le 1 \}$ the Euclidean unit disc, $a\in C^0(M,\Cm)$ and $SM = M\times \Sm^1$ the unit circle bundle of $M$. The variable in $\Sm^1$ will be referred to as ``angular''. For $f\in L^2(SM)$, we define the {\em attenuated X-ray transform} 
of $f$ by
\begin{align}
  I_a f(\x,v) = \int_0^{\tau(\x,v)} f(\varphi_t(\x,v)) \exp\left( \int_0^{t} a(\gamma_{\x,v}(s))\ ds \right)\ dt, \quad (\x,v) \in \partial_+ SM,
  \label{eq:attraytrans}
\end{align}    
where we denote $\varphi_t(\x,v) = (\x+tv, v) = (\gamma_{\x,v}(t), v)$ the Euclidean geodesic flow and 
\begin{align*}
    \partial_+ SM = \{ (\x,v)\in SM, \quad \x\in \partial M, \quad v\cdot \nu_\x \ge 0 \}
\end{align*}    
denotes the ingoing boundary (data space where the transform is defined). Considering an integer $m$, we denote by $L^2_{(m)}(SM)$ the subspace of $L^2(SM)$ consisting of elements with harmonic content in the angular variable contained in $\{-m,\dots,m\}$.

Upon restricting it to certain subspaces $L^2_{(m)}(SM)$, the transform \eqref{eq:attraytrans} encompasses several problems: when $f(\x,v) = f(\x)$ ($m=0$), this is the mathematical formulation of SPECT \cite{Arbuzov1998,BomanStroemberg2004,Finch2004,Natterer2001a,Novikov2002}; when $f$ is a vector field (i.e., linear in $v$ in the above form, or $m=1$), this is the mathematical formulation of Doppler Tomography \cite{Sparr1995,Holman2009,Kazantsev2007,Sadiq2014}; for general $m$, this corresponds to symmetric $m$-tensors and the tensor tomography problem \cite{Sharafudtinov1994,Sadiq2015,Paternain2011a}; finally, for integrands with non-polynomial dependence (or infinite harmonic content in the angular variable), this problem has applications in the study of the Boltzmann transport equation, with applications to Bioluminescence Imaging \cite{Bal2007,Stefanov2008a}, as will be illustrated in forthcoming work. While the first two cases have now been studied for a few decades, the present setting allows for a comprehensive understanding of the problem for integrands with general angular dependence; moreover, the fan-beam viewpoint adopted (traditionally rebinned into parallel geometry in the Euclidean case, see \cite{Natterer2001}) allows to elucidate certain questions related to X-ray transforms on manifolds, which continue to receive serious attention \cite{Assylbekov2017,Monard2015,Paternain2012,Paternain2011a,Paternain2015}.  


The attenuated tensor tomography problem we consider may be formulated as follows: given $m$ and $f\in L^2_{(m)}(SM)$, what is reconstructible of $f$ from $I_a f$, and how to reconstruct it ?  


Except for isolated cases, the operator $I_a$ restricted to $L^2_{(m)}(SM)$ has a non-trivial kernel given by the following: for any $h\in L^2_{(m-1)}(SM)$ with spatial components in $H^1_0(M)$, $I_a [(X+a)h] = 0$ (with $X = \cos\theta \partial_x + \sin\theta \partial_y$ the geodesic vector field). It is natural to ask whether these integrands are the only elements in the kernel, and we provide a positive answer to this question. Further, the size of the kernel of $I_a$ restricted to such tensors increases, and for reconstruction purposes, we present a candidate to be reconstructed modulo this kernel. As can be expected heuristically from \cite{Monard2015a}, when considering tensors of order $m\ge 1$, the form of the reconstructed candidate only differs from the case $m=1$ by residual terms. By ``residual'' here we mean that such terms are harmonic in position, and therefore represent very little relevant information compared to the main ``bulk'' made up of two full functions in $L^2(M)$ and $H^1_0(M)$. In particular, these residual terms contain no singularity inside the domain. The reconstruction procedure then consists in reconstructing the residual terms first, then the bulk. 

The reconstruction of the residual terms requires the explicit construction of {\em invariant distributions}, that is, distributional solutions of $Xw = 0$ on $SM$, with conditions on their moments (e.g., fiberwise holomorphic with prescribed fiberwise average). The quest for such invariant distributions has been rather active \cite{Paternain2012a,Paternain2015}, as injectivity statements of X-ray transforms have been proven to be equivalent to the existence of certain invariant distributions \cite{Paternain2016}, which can also be formulated as surjectivity results for backprojection operators (e.g., $I_0^*$). A salient feature here is the explicit construction of such distributions, whose existence is usually based on ellipticity arguments \cite{Pestov2005}, or series of iterated Beurling transforms in higher dimensions \cite{Paternain2015}. Such invariant distributions, via appropriate integrations by parts on $SM$, allow to obtain reconstruction formulas for the residual terms mentioned above.  

We then carry out the reconstruction of the main bulk ($g_0$ and $g_s$ below). In a recent work with Assylbekov and Uhlmann \cite{Assylbekov2017}, the author provided range characterizations and reconstruction formulas for the attenuated ray transform on surfaces, restricted to the case $m=1$ (i.e., sums ``function + vector field'') in smooth topologies, in particular generalizing the approach in \cite{Kazantsev2007} to complex-valued attenuations and non-Euclidean geometries following \cite{Salo2011,Monard2015}. We revisit these results here, and the Euclidean case allows for more precise statements. The main tools involved are a {\em holomorphization operator} (as introduced in \cite{Assylbekov2017}) and {\em holomorphic integrating factors} first introduced in \cite{Salo2011} which are so crucial in two-dimensional tomography problems \cite{Assylbekov2017,Paternain2011a,Paternain2012}. 

It should be noted that this reconstructed representative is also valid for vector fields alone (Doppler transform), though unlike the reconstruction formulas provided in \cite{Kazantsev2007,Monard2015}, this formula provides a partial reconstruction of a vector field (namely, its solenoidal part) even where the attenuation vanishes. This is similar in spirit to \cite{Tamasan2007}, see also Remark \ref{rem:solenoidal}.


We restrict this article to the case of attenuations which depends on position only. Other types are considered in other settings, that is, linearly-dependent in angle \cite{Paternain2012,Monard2016}. Such ``attenuations'' have a different physical meaning (that of a connection, see \cite{Paternain2012b,Paternain2012,Zhou2017}), and their inversion can sometimes be tackled without the use of holomorphic integrating factors, see the recent work \cite{Monard2016}. 


Other approaches for tackling Euclidean attenuated transforms have been $A$-analytic function theory {\it \`a la} Bukhgeim \cite{Arbuzov1998,Kazantsev2007,Tamasan2007}, leading in particular to recent range characterizations of the attenuated transform over functions, one-forms and second-order tensors \cite{Sadiq2015,Sadiq2013,Sadiq2014}, and Riemann-Hilbert problems, leading in particular to an efficient reconstruction of functions from their attenuated transform \cite{BomanStroemberg2004,Natterer2001a,Novikov2002} and studies of more general integrands and partial data problems in \cite{Bal2004a}.



We finally point out a few differences with the previous work \cite{Monard2015a} on unattenuated tensor tomography:
\begin{itemize}
  \item The decompositions modulo kernel presented no longer split according to the parity of the tensor order $m$, as attenuated transport equations now mix all even and odd angular modes. 
  \item In order to recover the residual elements, the approach in \cite{Monard2015a} was to compute their forward transform, whose frequency content in data space could easily be described, and an inversion formula easily derived. In the attenuated case, this is no longer the case, hence the necessity of constructing special invariant distributions, combined with integrations by parts on $SM$. 
  \item Unlike the attenuated case, residual elements must be reconstructed in a specific order, that is, from higher to lower.
\end{itemize}
We now state the main resuts. 

\section{Main results}

\paragraph{Spaces and notation.} Denote by $C_P$ the {\em Poincar\'e constant}\footnote{Via Rayleigh quotient, $C_p = \lambda_1^{-1}$, where $\lambda_1>0$ is the smallest eigenvalue of the Dirichlet Laplacian on $M$, see \cite[Ch. 11]{Strauss2007}} of the unit disc, that is,
\begin{align}
    C_P := \max \{C >0 :\ \|u\|_{L^2(M)}^2 \le C \|\nabla u\|_{L^2(M)}^2, \quad \forall u\in H^1_0(M)\}.     
    \label{eq:CP}
\end{align}    
The spaces $L^2(SM)$ and $L^2(M)$ are endowed with their usual inner products denoted $\dprod{\cdot}{\cdot}_{SM}$ and $\dprod{\cdot}{\cdot}_{M}$ and corresponding norms 
\begin{align*}
    \|u\|^2 := \int_{SM} |u(\x,\theta)|^2\ d\x\ d\theta, \quad u\in L^2(SM), \qquad \|f\|_M^2 := \int_M |f(\x)|^2 \ d\x, \quad f\in L^2(M).
\end{align*}
We decompose $L^2(SM)$ into circular harmonics
\begin{align}
    L^2(SM) = \bigoplus_{k=0}^\infty H_k, \qquad H_k := \ker(\partial_\theta^2 + k^2 Id),
    \label{eq:harmonics}
\end{align}
where $H_0$ is isometric to $L^2(M)$ and for $k\ge 1$, $f\in H_k$ if and only if $f(x,\theta) = f_{k,+}(\x) e^{ik\theta} + f_{k,-}(\x) e^{-ik\theta}$ for some functions $f_{k,\pm}\in L^2(M)$. An element $u\in L^2(SM)$ decomposes accordingly
\begin{align*}
    u(\x,\theta) &= \sum_{k=0}^\infty u_k(\x,\theta) = u_0(\x) + \sum_{k=1}^\infty (u_{k,+}(\x)e^{ik\theta} + u_{k,-}(\x) e^{-ik\theta}), \\
    \|u\|^2 &= 2\pi \left( \|u_0\|_M^2 + \sum_{k=1}^\infty (\|u_{k,+}\|_M^2 + \|u_{k,-}\|_M^2) \right). \qquad (\text{Parseval})
\end{align*}
We also denote $L^2_{(m)}(SM) := \bigoplus_{k=0}^m H_k$, such a space can be viewed as the restriction to $SM$ of sums of symmetric tensor fields of order up to $m$. Denote $L^2(\ker\overline{\partial})$ and $L^2(\ker\partial)$ the subspaces of $L^2(M)$ made of complex- analytic and antianalytic functions (where $\partial = \frac{1}{2} (\partial_x - i\partial_y)$ and $\dbar = \frac{1}{2} (\partial_x + i\partial_y)$). Such spaces are closed in $L^2(M)$ (see e.g. \cite[Ex. 6 p254]{SteinShakarchi2005}), and for $k\ge 1$, we then define
\begin{align}
    \begin{split}
    \hd_k &:= \{ f = e^{ik\theta} f_{k,+}(\x) + e^{-ik\theta} f_{k,-}(\x),\ f_{k,-} \in L^2(\ker\partial), \ f_{k,+} \in L^2(\ker \overline{\partial}) \} \\
    &= \hd_{k,+} \oplus \hd_{k,-}.
    \end{split}
    \label{eq:Hdelta}
\end{align}
By the observations above, $\hd_k$ is a closed subspace of $H_k$ (hence Hilbert).

\begin{remark} The space $\hd_k$ corresponds to restrictions to $SM$ of trace-free (if $k\ge 2$), divergence-free symmetric $k$-tensors. To see this, decompose a $k$-tensor into $f = \sum_{p=0}^k f_p\ \sigma (dz^p\otimes dz^{k-p})$, whose trace is computed as 
    \begin{align*}
	\text{tr } f(v_1,\dots,v_{k-2}) = f(v_1,\dots,v_{k-2}, \partial_x, \partial_x) + f(v_1,\dots,v_{k-2}, \partial_y, \partial_y) &= 4 f(v_1,\dots,v_{k-2}, \partial, \dbar).
    \end{align*}
    Then $\text{tr }\sigma (dz^p\otimes dz^{k-p})$ is nonzero proportional to $\sigma(dz^{p-1} \otimes d\zbar^{k-1-p})$ if $0<p<k$, and zero otherwise. In particular, a trace-free $k$-tensor takes the form $f_0 dz^k + f_k d\zbar^k$, and its divergence is then given by $\text{div } f = (\dbar f_0) dz^{k-1} + (\partial f_k) d\zbar^{k-1}$, see, e.g., \cite[Appendix B]{Paternain2015}. Hence the claim. 
\end{remark}

\begin{remark} For the sake of brevity in Theorem \ref{thm:rep} and its proof, we are changing notation slightly from \cite{Monard2015a}, keeping circular harmonics of same magnitude $\pm k$ in the same subspace $H_k$. In this correspondence and recalling the definitions $\eta_+ = e^{i\theta} \partial$ and $\eta_- = e^{-i\theta} \overline{\partial}$, $\hd_{k,+}$ corresponds to $L^2(\ker^k \eta_-)$ in \cite{Monard2015a}, and $\hd_{k,-}$ corresponds to $L^2(\ker^{-k} \eta_+)$.
\end{remark}

The inward boundary $\partial_+ SM$ introduced in Sec. \ref{sec:SM} is parameterized with {\em fan-beam coordinates} $(\beta,\alpha) \in \Sm^1 \times (-\pi/2,\pi/2)$  (see Fig. \ref{fig:fanbeam}) together with the inner product 
\begin{align*}
    \left\langle f,g\right\rangle_{\partial_+ SM} = \int_{\partial_+ SM} f(\beta,\alpha) \overline{g(\beta,\alpha)} \ d\beta\ d\alpha.
\end{align*}

For $a\in C^0(M,\Cm)$ and $f\in C^0(SM)$, then $I_a f \in C^0(\partial_+ SM)$, moreover the following mapping property is immediate. 
\begin{lemma}\label{lem:continuity} For $a\in C^0(M,\Cm)$, the attenuated ray transform extends into a bounded operator $I_a:L^2(SM)\to L^2(\partial_+ SM)$, with norm no greater than $\sqrt{2} e^{2a_\infty}$, where $a_\infty := \sup_{\x\in M} |a(\x)|$.    
\end{lemma}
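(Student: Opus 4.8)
The plan is to prove the $L^2$-boundedness of $I_a$ by reducing to the unattenuated case: the attenuation factor is uniformly bounded, so up to the constant $e^{2a_\infty}$ the estimate follows from the boundedness of $I_0$, which in turn is most transparently obtained in fan-beam coordinates. First I would record the pointwise bound on the integrating factor: for $(\x,v)\in\partial_+SM$ and $0\le t\le\tau(\x,v)$, since the chord length $\tau$ through the unit disc never exceeds $2$, one has $\left|\exp\bigl(\int_0^t a(\gamma_{\x,v}(s))\,ds\bigr)\right|\le e^{\tau(\x,v)a_\infty}\le e^{2a_\infty}$. Hence $|I_af(\x,v)|\le e^{2a_\infty}\int_0^{\tau(\x,v)}|f(\varphi_t(\x,v))|\,dt = e^{2a_\infty}\,I_0(|f|)(\x,v)$ pointwise on $\partial_+SM$, and it suffices to show $\|I_0\|_{L^2(SM)\to L^2(\partial_+SM)}\le\sqrt2$.

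For the unattenuated bound I would work directly with fan-beam coordinates $(\beta,\alpha)\in\Sm^1\times(-\pi/2,\pi/2)$, in which a line is parameterized by its boundary entry point $\beta$ and the signed angle $\alpha$ it makes with the inward normal. Along each such line parameterize the interior by arclength $t\in(0,\tau)$; the map $(\beta,\alpha,t)\mapsto(\x,\theta)\in SM$ is a change of variables whose Jacobian I would compute to be $\cos\alpha$ (this is the standard fan-beam volume form identity; it is exactly the Santaló-type formula underlying the setup of Section \ref{sec:SM}). Then by Cauchy--Schwarz in $t$,
\begin{align*}
|I_0g(\beta,\alpha)|^2 \le \tau(\beta,\alpha)\int_0^{\tau(\beta,\alpha)} |g(\varphi_t)|^2\,dt \le 2\int_0^{\tau(\beta,\alpha)} |g(\varphi_t)|^2\,dt,
\end{align*}
using $\tau\le 2$ again. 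Integrating over $\partial_+SM$ and applying the change of variables,
\begin{align*}
\|I_0g\|_{L^2(\partial_+SM)}^2 \le 2\int_{\Sm^1}\int_{-\pi/2}^{\pi/2}\int_0^{\tau}|g(\varphi_t(\beta,\alpha))|^2\,dt\,d\alpha\,d\beta = 2\int_{SM}|g(\x,\theta)|^2\,\frac{d\x\,d\theta}{\cos\alpha}\cdot\cos\alpha,
\end{align*}
wait — more carefully, the factor $\cos\alpha$ from the Jacobian combines with the absence of any extra weight on the left, giving $\|I_0g\|_{L^2(\partial_+SM)}^2\le 2\int_{SM}|g|^2\,d\x\,d\theta\cdot(\text{sup of }1/\cos\alpha\text{ weighting})$; so to keep the clean constant $\sqrt2$ one instead notes that the $\cos\alpha$ Jacobian appears with the $d\alpha\,d\beta$ measure and that $\cos\alpha\le 1$, so the measure $\cos\alpha\,dt\,d\alpha\,d\beta$ pushes forward to $d\x\,d\theta$ exactly. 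Carrying the $\cos\alpha$ through on the data side via $d\beta\,d\alpha\le \frac{1}{\cos\alpha}(\cos\alpha\,d\beta\,d\alpha)$ is where one must be slightly careful; the clean route is to absorb it so that the final inequality reads $\|I_0g\|_{L^2(\partial_+SM)}^2\le 2\|g\|^2$, hence $\|I_0\|\le\sqrt2$.

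Combining, $\|I_af\|_{L^2(\partial_+SM)}\le e^{2a_\infty}\|I_0(|f|)\|_{L^2(\partial_+SM)}\le\sqrt2\,e^{2a_\infty}\|f\|$, which is the claim. The main obstacle is bookkeeping the fan-beam Jacobian and the $\cos\alpha$ weight correctly so that the constant comes out as exactly $\sqrt 2$ rather than something larger; everything else (the pointwise bound on the integrating factor, Cauchy--Schwarz, $\tau\le 2$) is routine. I would also remark that the density of $C^0(SM)$ in $L^2(SM)$, together with the stated fact that $I_af\in C^0(\partial_+SM)$ for continuous $f$, justifies the extension by continuity to all of $L^2(SM)$.
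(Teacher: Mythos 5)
Your reduction to the unattenuated transform is exactly the paper's: the pointwise bound $|I_af|\le e^{2a_\infty}\,I|f|$ via $\tau\le 2$, after which the paper simply invokes $\|I\|_{L^2(SM)\to L^2(\partial_+SM)}\le\sqrt2$ from \cite[Lemma 4.1]{Monard2015a}. You go further and try to reprove that cited bound, and that is where your argument has a genuine gap, which you flag (``wait --- more carefully'') but never resolve. The problem is the order in which you use $\tau\le2$: applying it \emph{before} integrating gives $|I_0g(\beta,\alpha)|^2\le 2\int_0^{\tau}|g(\varphi_t)|^2\,dt$, and you then need
\begin{align*}
\int_{\partial_+SM}\int_0^{\tau}|g(\varphi_t(\beta,\alpha))|^2\,dt\,d\beta\,d\alpha\;\le\;\|g\|^2.
\end{align*}
By Santal\'o's formula \eqref{eq:santalo} the left-hand side equals $\int_{SM}|g|^2\,(\cos\alpha)^{-1}\,d\x\,d\theta$, where $\alpha$ is the fan-beam angle of the line through $(\x,\theta)$; since $(\cos\alpha)^{-1}\ge 1$ and blows up on directions nearly tangent to $\partial M$, the inequality goes the wrong way and cannot simply be ``absorbed'' as you propose. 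As written, your middle step does not close.

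The fix is one line and removes all the bookkeeping anxiety: on the unit disc $\tau(\beta,\alpha)=2\cos\alpha$ exactly, so Cauchy--Schwarz gives
\begin{align*}
|I_0g(\beta,\alpha)|^2\;\le\;2\cos\alpha\int_0^{2\cos\alpha}|g(\varphi_t(\beta,\alpha))|^2\,dt.
\end{align*}
Keep the factor $\cos\alpha$ intact: integrating against $d\beta\,d\alpha$ reproduces precisely the Santal\'o measure, and \eqref{eq:santalo} yields $\|I_0g\|^2_{L^2(\partial_+SM)}\le 2\|g\|^2$ with no loss. In other words, the crude bound $\tau\le2$ should be reserved for the attenuation factor (where the paper uses it, with $2$ the diameter of $M$), while in the Cauchy--Schwarz step the exact value $\tau=2\cos\alpha$ must be retained so that the Jacobian cancels. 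With that correction your argument is complete and gives the stated constant $\sqrt2\,e^{2a_\infty}$; the density argument for the extension to $L^2(SM)$ is fine as you state it.
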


\begin{proof} This is a direct consequence of \cite[Lemma 4.1]{Monard2015a} and the obvious pointwise estimate $|I_a f(\beta,\alpha)| \le e^{2a_\infty} I |f| (\beta,\alpha)$, where the factor $2$ is the diameter of $M$ and $I$ denotes the unattenuated transform.     
\end{proof}

\paragraph{Gauge representatives of the attenuated transform. }

We denote $X = \cos\theta \partial_x + \sin\theta \partial_y$ the geodesic vector field and $X_\perp = \sin\theta \partial_x - \cos\theta \partial_y$ the ``transverse derivative''. 

\begin{theorem}\label{thm:rep}
    Let $a\in C^0(M,\Cm)$ with supremum $a_\infty$ and let $m$ a natural integer. For any $f\in L^2_{(m)}(SM)$, there exists $g\in L^2_{(m)}(SM)$, linear in $f$, satisfying $I_a f = I_a g$, of the form 
  \begin{align}
      g = g_0 + X_\perp g_s + \sum_{k=1}^{m} g_k,\quad (g_0,g_s) \in L^2(M)\times H_0^1(M),\quad g_k\in \hd_k, \quad 1\le k\le m.
    \label{eq:g}
  \end{align}	
  Moreover, with $C:= 4+8a_\infty^2 C_P$, we have the following stability estimates
  \begin{align}
      \begin{split}
	  \|g\|^2 &\le 2 \|f_0\|^2 + (1+4a_\infty^2 C_P)\|f_1\|^2, \qquad (m=1), \\
	  \|g\|^2 &\le 4 \sum_{p=0}^{m-2} C^p \|f_p\|^2 + 2 C^{m-1} \|f_{m-1}\|^2 + C^m \|f_m\|^2, \qquad (m\ge 2).        \end{split}
      \label{eq:repest}
  \end{align}      
\end{theorem}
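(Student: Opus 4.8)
The plan is to build $g$ mode by mode, starting from the top harmonic $f_m$ and descending, at each stage peeling off a kernel element $(X+a)h$ with $h\in L^2_{(m-1)}(SM)$, $h$ having spatial components in $H^1_0(M)$, so as to cancel the ``non-holomorphic'' part of the current top mode. Concretely, write $f = f_0 + \cdots + f_m$ with $f_k = f_{k,+}e^{ik\theta} + f_{k,-}e^{-ik\theta}$. The key algebraic fact is that the operator $X+a$ acts on circular harmonics by raising/lowering: $X = \eta_+ + \eta_-$ with $\eta_\pm = e^{\pm i\theta}(\partial$ or $\dbar)$ shifting the harmonic degree by $\pm 1$, while multiplication by $a$ preserves degree. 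So if I choose $h = h_{m-1}\in H_{m-1}$ suitably, then $(X+a)h$ has a component in $H_m$ equal to $\eta_+ h_{m-1,+}' + \eta_- h_{m-1,-}'$ (the degree-$m$ raising parts), and lower-order components $a h_{m-1}$ plus the degree $m-2$ lowering parts. The degree-$m$ part of $f - (X+a)h$ can be forced to lie in $\hd_m$ (i.e. $f_{m,+}-$ correction $\in L^2(\ker\dbar)$ and $f_{m,-}-$ correction $\in L^2(\ker\partial)$) by solving two $\dbar$- and $\partial$-problems; this is exactly the Hodge-type / solenoidal decomposition of a symmetric tensor, and the relevant potentials exist in $H^1_0(M)$ with the Poincaré-constant bound $\|h\|\lesssim a_\infty\sqrt{C_P}\|f_m\|$-type control because the only inhomogeneous source at this top step is the raising of $h_{m-1}$, whose size is governed by the $L^2$ norm of the non-holomorphic part of $f_m$ and of $a$. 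Then replace $f$ by $f - (X+a)h$, which has the same $I_a$-transform by the stated kernel description, has top mode now in $\hd_m$, and has modified lower modes $f_{m-1},\dots,f_0$ that have absorbed $-a h_{m-1}$ and the lowering of $h_{m-1}$. Iterate downward: at step $k$ ($m\ge k\ge 2$) kill the non-holomorphic part of the degree-$k$ mode by subtracting $(X+a)h_{k-1}$ with $h_{k-1}\in H_{k-1}$ having $H^1_0$ spatial components.

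For the bottom two modes $k=1$ and $k=0$ the target is different: we want the degree-$1$ part to be written as $X_\perp g_s$ with $g_s\in H^1_0(M)$ plus nothing else, and the degree-$0$ part to be a free function $g_0\in L^2(M)$. Here I would use that $X_\perp$ also maps $H_0\to H_1$ and that, combined with one more kernel subtraction $(X+a)h_0$ with $h_0\in H_0\cap H^1_0(M)$ (scalar potential), the degree-$1$ mode can be normalized into the pure $X_\perp$-gauge while the leftover is dumped into $g_0$; this is precisely the $m=1$ normalization, which should reduce to the known reconstruction/representative statement (cf.\ \cite{Assylbekov2017}, \cite{Kazantsev2007}), with the sharp constants $\|g\|^2\le 2\|f_0\|^2+(1+4a_\infty^2C_P)\|f_1\|^2$ coming from: $g_s$ is obtained by solving a Poisson-type equation whose source is controlled by $f_1$ (factor $4a_\infty^2C_P$ from one application of the Poincaré inequality to handle the $a$-coupling), and $g_0$ absorbs $f_0$ plus a term from $\dbar g_s$ or similar, giving the factor $2$.

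The stability estimate \eqref{eq:repest} for general $m$ is then a bookkeeping exercise: at each descending step the potential $h_{k-1}\in H_{k-1}$ satisfies, by elliptic estimates for $\dbar/\partial$ with zero boundary data, $\|h_{k-1}\|^2 \lesssim C_P$ times the squared $L^2$ norm of the source, and the source at step $k$ is (non-holomorphic part of current $f_k$) $+$ $a\cdot h_k$ (the contribution pushed down from the previous step). Unwinding the recursion, the coefficient in front of the original $\|f_p\|^2$ picks up a factor $C = 4 + 8a_\infty^2 C_P$ for each level it is propagated down — the $4$ from splitting a mode into its $\pm$ holomorphic/antiholomorphic pieces and the triangle inequality $\|a+b\|^2\le 2\|a\|^2+2\|b\|^2$ used twice, and the $8a_\infty^2C_P$ from the $a$-coupling through the Poincaré inequality. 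The top mode $f_m$ is only ever processed once so it gets a bare $C^m$ (it is propagated conceptually $m$ times through the downward chain), $f_{m-1}$ gets $2C^{m-1}$ (it is also partly handled by the $m=1$ step which contributes the factor $2$ rather than $4$), and the generic $f_p$ for $p\le m-2$ gets $4C^p$ — matching \eqref{eq:repest} exactly once the indices are tracked carefully.

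The main obstacle, and the step I would spend the most care on, is making the descending construction simultaneously (i) produce a top mode landing in the \emph{closed} subspace $\hd_k$ at every level — which requires that the correction I subtract off is precisely the ``potential'' part in the $L^2(\ker\partial)\oplus L^2(\ker\dbar)$-orthogonal-complement sense, so one must show the relevant $\dbar$/$\partial$ boundary value problems are solvable in $H^1_0(M)$ with the right estimates (this is where the disc geometry and the closedness of $L^2(\ker\partial)$, already cited from \cite{SteinShakarchi2005}, are used) — and (ii) keep a clean enough accounting of how the $a$-coupling $ah_k$ gets pushed into lower modes so that the constant $C$ really is $4+8a_\infty^2C_P$ and not something larger. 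Getting the two special bottom steps $k=1,0$ to interface with the generic step without double-counting, and producing the asymmetric constants ($C^m$, $2C^{m-1}$, $4C^p$), is the delicate part; everything else is the standard raising/lowering calculus for $X$, $X_\perp$, $\eta_\pm$ on $SM$.
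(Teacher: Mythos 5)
Your proposal is correct and follows essentially the same route as the paper: a descending induction in which the top harmonic $f_m$ is split via $\dbar$/$\partial$ elliptic decompositions into a piece of $\hd_m$ plus $(X+a)v_{m-1}$ with $v_{m-1,\pm}\in H^1_0(M)$ (the paper's Lemma \ref{lem:decomp}), the terms $-a v_{m-1}$ and the lowering part of $Xv_{m-1}$ being pushed into modes $m-1$ and $m-2$ and handled by the induction hypothesis, with the $m=1$ base case producing $g_0 = f_0 - a g_p$ and $X_\perp g_s$ exactly as you describe. Your constant accounting (the $4$ from triangle inequalities, the $8a_\infty^2 C_P$ from the Poincar\'e inequality, and the asymmetric coefficients $C^m$, $2C^{m-1}$, $4C^p$) also matches the paper's bookkeeping.
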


Theorem \ref{thm:rep} was established in the unattenuated case \cite{Monard2015a}, though the dependence of the continuity estimate on $m$ is now made explicit. Note that such an estimate for general $f\in L^2(SM)$ could not be possible, unless we assume some decay on the angular moments of $f$. To this end, let us define, for $\kappa \ge 1$
\begin{align*}
    L^{2,\kappa}(SM) &:= \{f\in L^2(SM),\  \|f\|_{\kappa} <\infty \}, \\
    \text{where}\qquad \|f\|^2_{\kappa} &:= 2\pi (\|f_0\|_M^2 + \sum_{k\ge 1} \kappa^k (\|f_{k,+}\|_M^2 + \|f_{k,-}\|_M^2)).
\end{align*}

\begin{theorem}\label{thm:repinf} Suppose $f\in L^{2,C} (SM)$ where $C$ is the constant in Theorem \ref{thm:rep}. Then there exists $g\in L^2(SM)$, linear in $f$, satisfying $I_a f = I_a g$, of the form 
    \begin{align}
	g = g_0 + X_\perp g_s + \sum_{k=1}^{\infty} g_k,\quad g_0 \in L^2(M), \quad g_s \in H_0^1(M),\ \quad g_k\in \hd_k, \quad k\ge 1.
	\label{eq:ginf}
    \end{align}
    Moreover, we have the estimate
    \begin{align}
	\|g\|^2 \le 4 \|f\|^2_{C}.
	\label{eq:repestinf}
    \end{align}
\end{theorem}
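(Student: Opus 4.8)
The plan is to obtain $g$ as a weak limit of the finite-order representatives supplied by Theorem \ref{thm:rep}. For each integer $m\ge 1$, let $f^{(m)}:=\sum_{k=0}^m f_k\in L^2_{(m)}(SM)$ be the truncation of $f$ to its first $m+1$ circular harmonics. Since $C=4+8a_\infty^2C_P\ge 4>1$ we have $\sum_{k\ge 0}\|f_k\|^2\le\sum_{k\ge 0}C^k\|f_k\|^2=\|f\|_C^2<\infty$, so $f^{(m)}\to f$ in $L^2(SM)$, and hence $I_af^{(m)}\to I_af$ in $L^2(\partial_+SM)$ by Lemma \ref{lem:continuity}. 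Applying Theorem \ref{thm:rep} to each $f^{(m)}$ produces $g^{(m)}\in L^2_{(m)}(SM)$, linear in $f$, of the form \eqref{eq:g}, with $I_ag^{(m)}=I_af^{(m)}$. Because truncation leaves the lower modes untouched (so the $p$-th harmonic of $f^{(m)}$ is $f_p$ for $p\le m$) and $C\ge 1$, each term appearing on the right of \eqref{eq:repest} is $\le 4C^p\|f_p\|^2$ in both cases $m=1$ and $m\ge 2$; hence \eqref{eq:repest} gives the bound $\|g^{(m)}\|^2\le 4\sum_{p=0}^m C^p\|f_p\|^2\le 4\|f\|_C^2$, uniformly in $m$.

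I would then extract, from the bounded sequence $(g^{(m)})$ in the Hilbert space $L^2(SM)$, a subsequence with $g^{(m_j)}\rightharpoonup g$; weak lower semicontinuity of the norm gives \eqref{eq:repestinf} at once. Since $I_a$ is bounded (Lemma \ref{lem:continuity}) it is weakly continuous, so $I_ag^{(m_j)}\rightharpoonup I_ag$; combined with $I_ag^{(m_j)}=I_af^{(m_j)}\to I_af$ and uniqueness of limits this forces $I_ag=I_af$.

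The structural claim \eqref{eq:ginf} is checked modewise. Write $P_j$ for the (bounded, hence weakly continuous) orthogonal projection of $L^2(SM)$ onto $H_j$, so that $P_jg^{(m)}\rightharpoonup P_jg$. Reading off \eqref{eq:g} gives $P_0g^{(m)}\in H_0$, $P_1g^{(m)}\in V:=X_\perp(H^1_0(M))\oplus\hd_1$, and $P_jg^{(m)}\in\hd_j$ for $2\le j\le m$ (with $P_jg^{(m)}=0\in\hd_j$ for $j>m$). Each of these is a closed subspace of $H_j$: $\hd_j$ is closed by the discussion around \eqref{eq:Hdelta}; $X_\perp$ is bounded below on $H^1_0(M)$ because $\|X_\perp u\|^2=\pi\|\nabla u\|_M^2$ together with the Poincar\'e inequality \eqref{eq:CP}, so $X_\perp(H^1_0(M))$ is closed, and a one-line integration by parts using the vanishing boundary values of $H^1_0(M)$ and the (anti)holomorphy defining $\hd_1$ shows $X_\perp(H^1_0(M))\perp\hd_1$ inside $H_1$, so $V$ is a closed \emph{orthogonal} direct sum. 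Closed subspaces being weakly closed, $P_0g\in H_0$, $P_1g\in V$, and $P_jg\in\hd_j$ for $j\ge 2$. Setting $g_0:=P_0g$, decomposing $P_1g=X_\perp g_s+g_1$ with $g_s\in H^1_0(M)$, $g_1\in\hd_1$ (uniquely, since $X_\perp(H^1_0(M))\cap\hd_1=\{0\}$), and $g_k:=P_kg$ for $k\ge 2$, the $L^2(SM)$-convergent expansion $g=\sum_{j\ge 0}P_jg$ is precisely \eqref{eq:ginf}; the summability of $\|g_k\|^2$ and the membership $g_s\in H^1_0(M)$ come for free from $\|g\|<\infty$ and the coercivity of $X_\perp$.

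Given how much of the work is carried by Theorem \ref{thm:rep}, the argument is essentially soft, and the two points deserving care are (i) the closedness of the $H_1$-component space $V$ — a sum of two closed subspaces of a Hilbert space need not be closed, and here what rescues it is the orthogonality of $X_\perp(H^1_0(M))$ and $\hd_1$, which must be verified — and (ii) promoting the subsequential weak limit to a genuinely \emph{linear} dependence $f\mapsto g$. For (ii) the clean route is to invoke uniqueness of the gauge representative, which forces the whole sequence $g^{(m)}$ to converge and exhibits $f\mapsto g$ as a pointwise weak limit of linear maps, hence linear; alternatively one may replace the subsequential limit by a weak ultrafilter limit along a fixed free ultrafilter on $\mathbb{N}$, which is additive and homogeneous by construction and still lies in each of the weakly closed sets used above. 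I expect (ii), the bookkeeping securing linearity, to be the only genuinely fussy step.
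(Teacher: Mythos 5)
Your proposal is correct in its skeleton and shares the paper's setup exactly: truncate $f$ to $f^{(m)}$, apply Theorem \ref{thm:rep}, and observe the uniform bound $\|g^{(m)}\|^2\le 4\|f\|_C^2$. Where you diverge is in how the limit is taken, and this is where the paper's argument is both simpler and stronger. The paper exploits the linearity of the finite-order construction one step further: since the order-$(m+n)$ construction applied to $f^{(m)}$ reproduces $g^{(m)}$, the difference $g^{(m+n)}-g^{(m)}$ is itself the representative of the tail $\sum_{p=m+1}^{m+n}f_p$, so \eqref{eq:repest} gives $\|g^{(m+n)}-g^{(m)}\|^2\le 4\sum_{p=m+1}^{\infty}C^p\|f_p\|^2\to 0$. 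The sequence is therefore Cauchy in norm, converges strongly, and linearity of $f\mapsto g$ is immediate as a pointwise (strong) limit of linear maps — no weak compactness, no subsequences, no lower semicontinuity needed (one even gets equality-type control rather than a $\liminf$). Your weak-compactness route does establish existence, the estimate \eqref{eq:repestinf}, $I_ag=I_af$, and the structural form \eqref{eq:ginf} correctly; in fact your verification that $X_\perp(H^1_0(M))$ is closed and orthogonal to $\hd_1$ is more careful than the paper's one-line appeal to closedness. The one genuine soft spot is your preferred fix for linearity: you invoke ``uniqueness of the gauge representative'' to upgrade subsequential weak limits to convergence of the whole sequence, but Theorem \ref{thm:reconstruction} establishes uniqueness only for representatives of \emph{finite} order $m$, not for the infinite-order form \eqref{eq:ginf}, so that route is not available as stated. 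Your ultrafilter backup does work, but it is heavy machinery for a problem that the Cauchy argument dissolves entirely; I would recommend replacing the weak-limit extraction with the telescoping estimate above.
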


\paragraph{Uniqueness and reconstruction.}
We now explain the reconstruction procedure, which requires introducing additional kernels and concepts. We define the kernel constructed in \eqref{eq:Green} 
\begin{align*}
    \G (x, y; \beta,\alpha) := \frac{1}{4\pi^2} \left( \frac{e^{-i\alpha}}{(1+ze^{-i(\beta+2\alpha)})^2} + \frac{e^{i\alpha}}{(1-z e^{-i\beta})^2} \right). \qquad z = x+iy.
\end{align*}
Also introduce the so-called {\em holomorphization operator} (see \eqref{eq:B}) for $h\in L^2(SM)$
\begin{align*}
    \Bh h := \frac{1}{2} \left[ (Id - iH) h + i (Id + iH) (A_+ P^\dagger A_-^\star (Id-iH)h) \right]|_{\partial_+ SM}, \qquad\text{and}\qquad \Ba h:= \overline {\Bh \overline{h}},
\end{align*}
with $A_\pm, A_\pm^*$ defined in Sec. \ref{sec:SM} and $P^\dagger$ in Lemma \ref{lem:Pdagger}. We finally define a {\em holomorphic integrating factor} for $a$ (see Prop. \ref{prop:HIF}) by 
\begin{align*}
    w_a := \frac{-i}{4} (Id + iH) (P_-^* I_0 a)_\psi, \qquad \rho_a := w_a|_{\partial_+ SM} = \frac{1}{2}I_0 a - \frac{i}{4} P_-^* I_0 a,
\end{align*}
with $P_-^*$ defined in \eqref{eq:Pstardef}. The reconstruction procedure then goes as follows.

\begin{theorem}\label{thm:reconstruction}
    Considering $(a,f,g,m)$ as in Theorem \ref{thm:rep}, the representative $g$ is unique and is reconstructed as follows. For $k=m$ down to $1$, reconstruct $g_k = e^{ik\theta} g_{k,+} + e^{-ik\theta} g_{k,-} \in \hd_k$ via
    \begin{align*}
	g_{k,+} (\x) &= (-1)^k \int_{\partial_+ SM} e^{-\overline{\rho_{\abar}}}\ \I_k (\beta,\alpha)\  e^{-ik (\beta+\alpha)}\ \G (x,y; \beta,\alpha) \ d\beta\ d\alpha. \\
	g_{k,-}(\x) &= (-1)^k \int_{\partial_+ SM} e^{-\rho_{a}}\ \I_k (\beta,\alpha)\  e^{ik (\beta+\alpha)}\ \overline{\G (x,y; \beta,\alpha)} \ d\beta\ d\alpha.
    \end{align*}
    where $\I_k := I_a (g-\sum_{\ell = k+1}^m g_\ell)$. Then defining $\I := I_a (g-\sum_{\ell = 1}^m g_\ell) = I_a(g_0 + X_\perp g_s)$, the two functions $g_0,g_s$ are reconstructed via
    \begin{align*}
	g_0 &= -\eta_+ \Dh_{-1} - \eta_- \Da_1 - \frac{a}{2} \left( \Dh_0 + \Da_0 + i(g_+ - g_-) \right), \\
	g_s &= \frac{1}{2} (g_+ + g_-) - \frac{i}{2} (\Dh_0 - \Da_0),
    \end{align*}
    where we have defined $\Dh := e^{w_a} (\Bh(\I e^{-\rho_a}))_\psi$, $\Da := e^{\overline{w_{\overline{a}}}} (\Ba(\I e^{-\overline{\rho_{\overline{a}}}}))_\psi$, and where $\partial g_+ = \dbar g_- = 0$, so that $g_\pm$ are expressed as Cauchy formulas in terms of their known boundary conditions
    \begin{align*}
	g_+|_{\partial M} = -i (\I - \Dh|_{\partial SM})_0, \qquad g_-|_{\partial M} = i (\I - \Da|_{\partial SM})_0.
    \end{align*}
\end{theorem}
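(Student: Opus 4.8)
The plan is to exploit that every quantity in the statement is written \emph{explicitly} in terms of the data $I_a f = I_a g$ (via the partial data $\I_k$, $\I$ and the fixed kernels $\G$, $\rho_a$, $w_a$, $w_{\overline{a}}$); hence it suffices to verify that, fed $I_a g$ for a $g$ of the form \eqref{eq:g}, the procedure returns the components $g_0, g_s, g_1,\dots,g_m$ of that very $g$. Uniqueness of the representative then follows at once: two representatives with the same transform are fed the same input and must produce the same output at each step. I would organize the verification as a finite downward induction on $k$: assuming $g_{k+1},\dots,g_m$ have already been correctly recovered, so that $\I_k = I_a(g_0 + X_\perp g_s + \sum_{\ell=1}^k g_\ell)$ by linearity, recover $g_k\in\hd_k$; once $k=1$ is processed, recover $(g_0,g_s)$ from $\I = I_a(g_0 + X_\perp g_s)$.

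For the step recovering $g_k$, the mechanism is a duality pairing against an invariant distribution. For each fixed $\x\in M$ I would show that the kernel $e^{-\overline{\rho_{\abar}}}(\beta,\alpha)\,e^{-ik(\beta+\alpha)}\,\G(x,y;\beta,\alpha)$ is the restriction to $\partial_+ SM$ of a distribution $w$ on $SM$ which is fiberwise antiholomorphic of the appropriate degree and solves $(X+a)w = 0$: the factor $\G$ supplies the Green/Cauchy kernel built in \eqref{eq:Green}, $e^{-ik(\beta+\alpha)}$ is the fan-beam avatar of selecting the $k$-th circular harmonic, and $e^{-\overline{\rho_{\abar}}}$ threads in the holomorphic integrating factor so that $(X+a)w=0$ holds rather than $Xw=0$. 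Coupling the standard relation between $\I_k$ and the transport solution of $(X+a)u = -(g - \sum_{\ell>k}g_\ell)$ with $(X+a)w=0$ and an integration by parts on $SM$ (using the Santal\'o-type passage to fan-beam coordinates from \cite{Monard2015a}) rewrites the right-hand side of the formula as $\langle g_0 + X_\perp g_s + \sum_{\ell\le k}g_\ell, \overline{w}\rangle_{SM}$. Expanding in circular harmonics and using that $g_\ell\in\hd_\ell$ has complex-analytic $e^{i\ell\theta}$-component and anti-analytic $e^{-i\ell\theta}$-component, the antiholomorphic degree of $w$ is tuned exactly so that the contributions of all $\ell<k$ and of the harmonic-$\{-1,0,1\}$ block $g_0+X_\perp g_s$ vanish, leaving only the $g_k$ term, which the Green-kernel identity collapses to $g_{k,+}(\x)$; the $(-1)^k$ and the two summands of $\G$ bookkeep the integrating-factor normalization and the two endpoints of the chord. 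The formula for $g_{k,-}$ follows by the same argument after conjugation. This also explains the top-to-bottom ordering: $w$ does \emph{not} kill the harmonics $\ell>k$, so those must already have been subtracted in passing from $I_a g$ to $\I_k$.

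For the last step I would specialize the $m=1$ analysis of \cite{Assylbekov2017} (cf.\ \cite{Kazantsev2007}) to the Euclidean disc. Here $g_0 + X_\perp g_s = i(\dbar g_s)e^{-i\theta} + g_0 - i(\partial g_s)e^{i\theta}$ carries only harmonics $-1,0,1$. With the holomorphic integrating factor of Proposition \ref{prop:HIF} ($(X+a)e^{w_a}=0$, $w_a|_{\partial_+ SM} = \rho_a$), one checks that $\tilde u := e^{-w_a}u$ solves the \emph{unattenuated} transport equation $X\tilde u = -e^{-w_a}(g_0+X_\perp g_s)$ with $\tilde u|_{\partial_+ SM}=0$, so its boundary data is $\I e^{-\rho_a}$; the holomorphization operator $\Bh$ extracts the boundary trace of the fiberwise-holomorphic part $\tilde u^+$, the operator $(\cdot)_\psi$ returns it on $SM$, and multiplication by the holomorphic $e^{w_a}$ yields $\Dh = e^{w_a}(\Bh(\I e^{-\rho_a}))_\psi$, a fiberwise-holomorphic solution of the transport equation for $g_0 + X_\perp g_s$ up to a holomorphic remainder that is fiberwise constant; $\Da$ is its antiholomorphic mirror. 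Matching the harmonics $-1,0,1$ of the equations satisfied by $\Dh$ and $\Da$, and identifying the remainders $g_+$ (holomorphic) and $g_-$ (antiholomorphic) via $\partial g_+ = \dbar g_- = 0$ with boundary values read off as the zeroth circular harmonic of $\I - \Dh|_{\partial SM}$ and $\I - \Da|_{\partial SM}$ — hence given by Cauchy's formula — yields precisely the stated expressions for $g_0$ and $g_s$.

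The main obstacle I expect is the $g_k$-step: verifying that the displayed kernels are honest boundary traces of invariant distributions solving $(X+a)w=0$ rather than merely convenient integral kernels, and then executing the circular-harmonic bookkeeping that makes exactly the top harmonic $g_{k,\pm}$ survive while the lower harmonics and the bulk $g_0+X_\perp g_s$ drop out. Unlike the unattenuated setting of \cite{Monard2015a}, where the forward transform of the residual pieces could be computed and inverted directly, here one must carefully control the boundary terms in the integration by parts on $SM$ and track how the holomorphic integrating factor is transported through the construction \eqref{eq:Green}--\eqref{eq:B}; that transport is precisely what the explicit kernels encode.
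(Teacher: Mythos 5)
Your proposal follows essentially the same route as the paper: a downward sweep in $k$ in which each $g_{k,\pm}$ is recovered by pairing the partially stripped data against the boundary trace of a fiberwise (anti)holomorphic invariant distribution built from the kernel $\G$ and twisted by the holomorphic integrating factor $e^{-\overline{w_{\overline{a}}}}$ (resp.\ $e^{-w_a}$), with the harmonic bookkeeping killing all modes below $k$ and the bulk $g_0+X_\perp g_s$; followed by the holomorphization-operator argument of \cite{Assylbekov2017} for $(g_0,g_s)$. The ``main obstacle'' you flag --- exhibiting the kernels as honest boundary traces of invariant distributions with prescribed fiberwise average and controlling the integration by parts --- is exactly the content the paper supplies in Lemma \ref{lem:Pdagger}, Propositions \ref{prop:FBP}, \ref{prop:special}, \ref{prop:HIF}, \ref{prop:Wk} and Theorem \ref{thm:I0star}, so your outline is consistent with, and not an alternative to, the paper's proof.
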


\begin{remark}[Connection with the Doppler transform] \label{rem:solenoidal} We briefly explain how Theorem \ref{thm:reconstruction} applies to the Doppler transform, i.e., the attenuated transform over a vector field: a vector field $V\in L^2$, restricted to $SM$, takes the form $V = v_1(\x) e^{i\theta} + v_{-1}(\x) e^{i\theta}$. Applying a Helmholtz decomposition to $V$, we may write $V = Xf + X_\perp g$ for $f\in H^1_0(M)$ and $g\in \dot{H}^1(M)$. Then a direct calculation shows that $I_a V = I_a (-af + X_\perp g)$, out of which Theorem \ref{thm:reconstruction} implies that $-af$ and $g$ can both be reconstructed uniquely and stably in all cases. This allows to recover the following facts:
    \begin{itemize}
	\item As previously established in \cite{Kazantsev2004}, if $a$ vanishes nowhere, then both $f$ and $g$, and thus $V$, can be reconstructed stably throughout the domain. In general, this also clarifies why $f$ (or the potential part of $V$) can only be recovered on the support of $a$.
	\item As previously established in \cite{Tamasan2007}, $\text{curl }V = \Delta g$ can be reconstructed everywhere regardless of whether the attenuation vanishes. 
    \end{itemize}    
\end{remark}

Theorems \ref{thm:rep} and \ref{thm:reconstruction} motivate the following result. Such a result was proved before in the case of smooth tensors for the magnetic ray transform in \cite{Ainsworth2013}.

\begin{theorem}\label{thm:injectivity}
    For $f\in L^2_{(m)}(SM)$, if $I_a f = 0$, then there exists $h\in L^2_{(m-1)}(SM)$ with components in $H^1_0(M)$, vanishing at $\partial SM$, such that $f = Xh + ah$. 
\end{theorem}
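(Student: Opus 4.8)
The plan is to run the machinery of Theorems \ref{thm:rep} and \ref{thm:reconstruction} on $f$ itself and show that the representative $g$ must vanish, then unwind the gauge relating $f$ and $g$. First I would invoke Theorem \ref{thm:rep} to produce the gauge representative $g \in L^2_{(m)}(SM)$ of the form \eqref{eq:g} with $I_a f = I_a g$; by hypothesis $I_a f = 0$, so $I_a g = 0$. Now feed $g$ (with data $\I_k \equiv 0$ and $\I \equiv 0$ at every stage) into the reconstruction formulas of Theorem \ref{thm:reconstruction}: descending from $k=m$ to $k=1$ we get $g_{k,\pm} = 0$, hence each $g_k = 0$; then $\Dh$ and $\Da$ are built from $\I \equiv 0$ so they vanish, the Cauchy data for $g_\pm$ vanish so $g_\pm \equiv 0$, and finally the formulas for $g_0$ and $g_s$ give $g_0 = 0$ and $g_s = 0$. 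Therefore $g \equiv 0$, and the uniqueness clause of Theorem \ref{thm:reconstruction} is consistent with this.

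It remains to track what ``$I_a f = I_a g$ with $g$ linear in $f$'' means constructively, because I need an explicit $h$ such that $f - g = (X+a)h$, and here $g = 0$. The construction of $g$ in the proof of Theorem \ref{thm:rep} proceeds (as in \cite{Monard2015a}) by successively killing the top angular modes of $f$: one solves, for the top mode, a transport-type equation to produce an $h_{m-1} \in H_m \cap \{\text{components in } H^1_0(M)\}$ wait---rather $h_{m-1}\in L^2_{(m-1)}(SM)$ with $H^1_0$ components vanishing on $\partial SM$, such that $f - (X+a)h_{m-1}$ has angular content in $\{-(m-1),\dots,m-1\}$, and iterates. Summing these gives a single $h \in L^2_{(m-1)}(SM)$ with components in $H^1_0(M)$, vanishing at $\partial SM$, with $f - (X+a)h = g$. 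Since we have shown $g = 0$, this yields exactly $f = Xh + ah$, which is the claim. So the proof is really: (i) apply Theorem \ref{thm:rep} and record that the implicit $h$ has the stated regularity and boundary behaviour; (ii) apply Theorem \ref{thm:reconstruction} with zero data to conclude $g=0$.

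The main obstacle I anticipate is purely bookkeeping rather than conceptual: one must confirm that the gauge element $h$ produced inside the proof of Theorem \ref{thm:rep} genuinely lies in $L^2_{(m-1)}(SM)$ with all spatial components in $H^1_0(M)$ and with the correct vanishing trace on $\partial SM$ --- i.e. that solving the relevant $\dbar$-/transport equations at each mode yields $H^1_0$ regularity (this is where the Poincar\'e constant $C_P$ entered the stability estimates \eqref{eq:repest}, so the a priori bound is already in hand) and that no boundary term is lost when one writes $f - g = (X+a)h$. Once that is checked, combining it with $g=0$ from the reconstruction is immediate. A secondary point is to make sure the argument does not secretly require $f$ smooth: since Theorems \ref{thm:rep} and \ref{thm:reconstruction} are stated for $f \in L^2_{(m)}(SM)$, the conclusion holds at the $L^2$ level, and the components of $h$ are then genuinely in $H^1_0(M)$ with $L^2$ trace zero, which is the sense in which ``vanishing at $\partial SM$'' should be read.
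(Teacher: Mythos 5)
Your proposal is correct and follows essentially the same route as the paper: invoke the constructive gauge of Theorem \ref{thm:rep} to write $f-(X+a)h=g$ with $h\in L^2_{(m-1)}(SM)$ having $H^1_0(M)$ components (the $v_{k-1,\pm}\in H^1_0(M)$ produced in Lemma \ref{lem:decomp}), then use Theorem \ref{thm:reconstruction} with identically zero data to conclude $g=0$. The paper phrases the last step via the transport solution $u$ (showing $u-v=0$ by uniqueness of the homogeneous transport problem), but this is only a cosmetic difference from your bookkeeping of the accumulated gauge.
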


{\bf Outline and additional results.} The rest of the article is organized as follows. In Section \ref{sec:prelim}, we recall preliminaries on the geometry of $SM$ and its boundary (\S \ref{sec:SM}), and on transport equations (\S \ref{sec:transport}). We prove Theorems \ref{thm:rep}, \ref{thm:repinf} and \ref{thm:injectivity} in Section \ref{sec:proofs124}. The reconstruction aspects are covered in Section \ref{sec:reconstruction}. We first describe important boundary operators in \S \ref{sec:boundary}, namely $P_\pm$ (defined in \eqref{eq:P}) and provide explicit pseudo inverses $P_\pm^\dagger$ for them in Lemma \ref{lem:Pdagger}. We then show in Proposition \ref{prop:FBP} that $P_\pm^\dagger$ are the ``filters'' in the (unattenuated) filtered-backprojection formulas for functions and solenoidal vector fields in fan-beam coordinates, allowing as a novelty for vector fields to be supported up to the boundary. This then allows us to construct holomorphic integrating factors (\S \ref{sec:HIF}) for functions and vector fields in Propositions \ref{prop:special} and \ref{prop:HIF}. Next in Section \ref{sec:invariant}, we cover in Theorem \ref{thm:I0star} the explicit construction of invariant distributions with one-sided harmonic content and prescribed fiberwise average. Using such distributions via integration by parts on $SM$, we then show in \S \ref{sec:gm} how to reconstruct the residual terms $g_m \in \hd_m$. Finally, we cover the reconstruction of the last two functions $(g_0,g_s)$ in \S \ref{sec:g0gs} in Theorem \ref{thm:inversionfh0}, defining and using along the way the holomorphization operator in Proposition \ref{prop:holomorphization}.


\section{Preliminaries} \label{sec:prelim}

\subsection{Geometry of $SM$ and its boundary} \label{sec:SM}

The generator of all lines in the unit disc is the geodesic vector field $X = \cos\theta \partial_x + \sin\theta \partial_y$. $X$ is completed into a global frame of $SM$ using $X_\perp := \sin\theta \partial_x - \cos\theta \partial_y$ and $V:= \partial_\theta$. In the harmonic decomposition \eqref{eq:harmonics}, we define the {\em fiberwise Hilbert transform} $H:L^2(SM)\to L^2(SM)$ by the formula
\begin{align*}
    H (f(\x) e^{ik\theta}) = -i\sgn{k} f(\x) e^{ik\theta}, \quad f\in L^2(M), \quad k\in \Zm \quad (\text{with the convention } \sgn{0} = 0).
\end{align*}    
We also denote $H:L^2(\partial SM)\to L^2(\partial SM)$ the operator defined by the same formula on the circles sitting above points at the boundary $\partial M$. The following commutator was derived in \cite{Pestov2005}:
\begin{align}
    [H,X] u = X_\perp u_0 + (X_\perp u)_0, \qquad u\in C^\infty(SM), 
    \label{eq:commutator}
\end{align}    
where we define {\em fiberwise average} $u_0(\x) := \frac{1}{2\pi} \int_0^{2\pi} u(\x,\theta)\ d\theta$. $H$ splits into $H= H_+ + H_-$ where $H_{+/-}$ denotes $H$ restricted to even/odd harmonics. A function $u\in L^2(SM)$ is called {\em fiberwise holomorphic} (resp {\em antiholomorphic}) if $(Id+iH)u = 0$ (resp. $(Id-iH)u =0$). Fiberwise holomorphic functions are preserved under product and exponentiation (when they are defined in appropriate topologies).  

\todo[inline]{Does $H$ maps $C^0$ into $C^0$ ?}

We parameterize the boundary $\partial SM$ with {\em fan-beam coordinates} $(\beta,\alpha)\in \Sm^1 \times \Sm^1$, where $\x(\beta) = \binom{\cos\beta}{\sin\beta}$ parameterizes the point at the boundary and $v = \binom{\cos(\beta+\pi+\alpha)}{\sin (\beta+\pi+ \alpha)}$ is a vector at the basepoint $\x(\beta)$. The inward boundary $\partial_+ SM$ is the subset of $\partial SM$ for which $\frac{-\pi}{2}\le \alpha \le \frac{\pi}{2}$ (making $v$ inward-pointing) and the outward boundary $\partial_- SM$ is the subset of $\partial SM$ for which $\frac{\pi}{2}\le \alpha \le \frac{3\pi}{2}$ (making $v$ outward-pointing). The {\em scattering relation} $\SS:\partial_\pm SM \to \partial_\mp SM$ maps an ingoing/outgoing point to the outgoing/ingoing other end of the unique geodesic passing through it, and the {\em antipodal scattering relation} $\SS_A:\partial_+ SM\to \partial_+ SM$ maps an ingoing point to that at the other of the unique geodesic passing through it. These maps are given explicitly by 
\begin{align}
    \SS(\beta,\alpha) = (\beta+\pi + 2\alpha, \pi-\alpha), \qquad \SS_A (\beta,\alpha) = (\beta+ \pi + 2\alpha, -\alpha).
    \label{eq:scatrel}
\end{align}    

\begin{figure}[htpb]
    \centering
    \includegraphics[height=0.26\textheight]{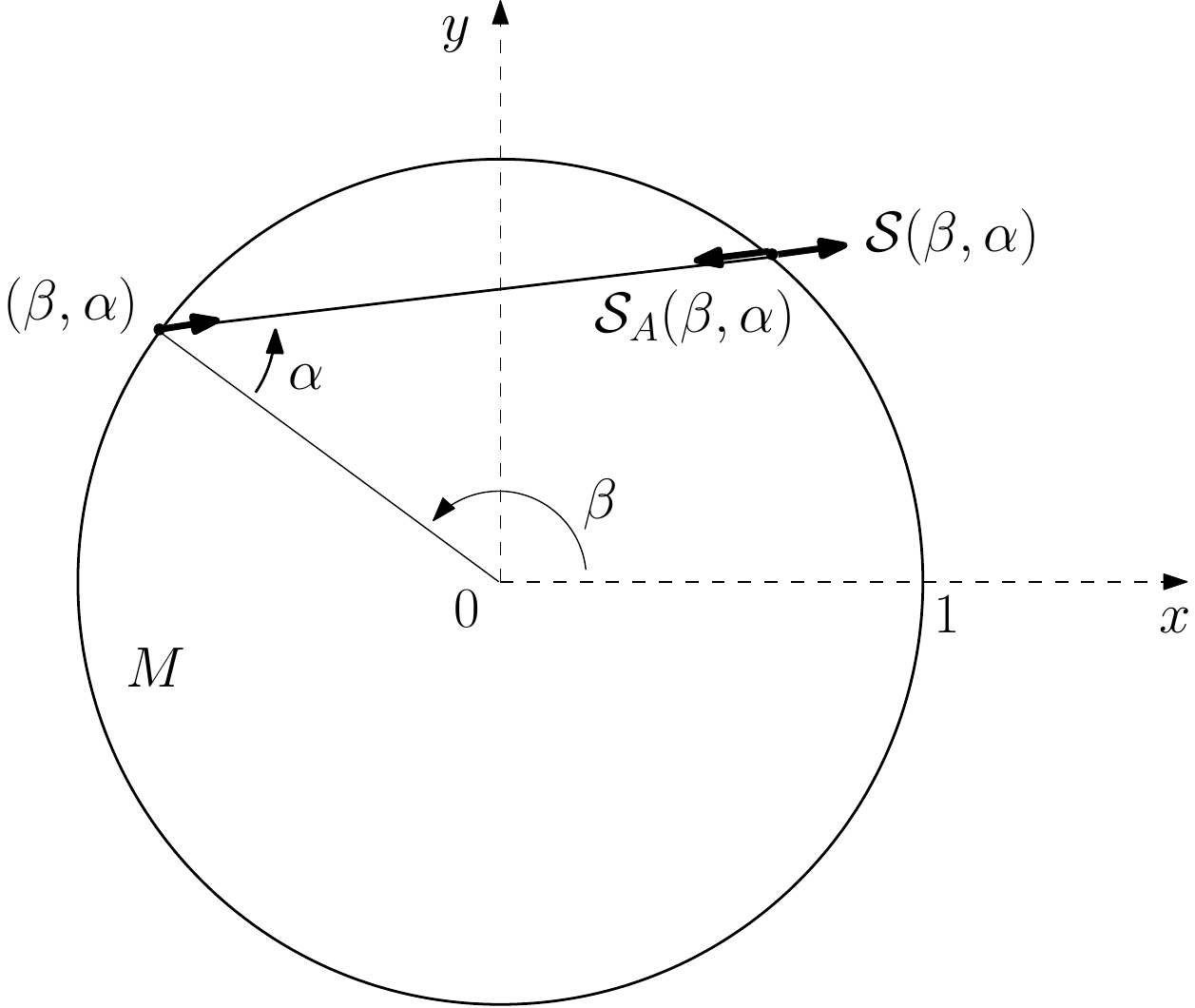}
    \caption{Fan-beam coordinates, scattering relation $\SS$ and antipodal scattering relation $\SS_A$.}
    \label{fig:fanbeam}
\end{figure}

We define $A_{\pm} : L^2(\partial_+ SM)\to L^2(\partial SM)$ the operators of even/odd extension via scattering relation, i.e. 
\begin{align*}
    A_\pm u(\beta,\alpha) = \left\{
    \begin{array}{ll}
	u(\beta,\alpha), & (\beta,\alpha)\in \partial_+SM, \\
	\pm u( \SS(\beta,\alpha)), & (\beta,\alpha)\in \partial_-SM,
    \end{array}
    \right. \qquad u\in L^2(\partial_+ SM). 
\end{align*}
Their adjoints are given by 
\begin{align*}
    A_\pm^* u (\beta,\alpha) = u(\beta,\alpha) \pm u(\SS(\beta,\alpha)), \qquad (\beta,\alpha)\in \partial_+ SM.
\end{align*}

\subsection{Transport equations and integration by parts on $SM$} \label{sec:transport}

The transform \eqref{eq:attraytrans} may be realized as the influx trace $u|_{\partial_+ SM}$ of the unique solution $u = u_a^f$ to the transport problem
\begin{align*}
  Xu + au = -f\quad (SM), \qquad u|_{\partial_- SM} =0. \qquad \qquad (\text{here, } X = \cos\theta \partial_x + \sin\theta \partial_y).
\end{align*}
Recall Santal\'o's formula
\begin{align}
    \int_{SM} f(\x,\theta)\ d\x\ d\theta = \int_{\partial_+ SM} \cos\alpha \int_0^{2\cos\alpha} f(\varphi_t(\beta,\alpha))\ dt\ d\beta\ d\alpha.
    \label{eq:santalo}
\end{align}
For $h\in C^0 (\partial_+ SM)$, we denote $h_\psi = v \in C^0(SM)$ the unique solution to 
\begin{align}
    Xv = 0 \quad (SM), \qquad v|_{\partial_+ SM} = h.    
    \label{eq:hpsi}
\end{align}
Using \eqref{eq:santalo}, we can derive the following integration by parts formula, true for any $u,v\in C^1(SM)$: 
\begin{align*}
    \dprod{Xu}{v}_{SM} + \dprod{u}{X v}_{SM} = \int_{SM} X(u\overline{v}) &= \int_{\partial_+ SM} \int_0^{2\cos\alpha} \frac{d}{dt} (u\overline{v})(\varphi_t(\beta,\alpha))\ dt \cos\alpha\ d\alpha\ d\beta \\
    &= - \int_{\partial_+ SM} A_-^* \left( u\overline{v}|_{\partial SM} \right)\ \cos\alpha\ d\alpha\ d\beta. 
\end{align*}
In particular, if $u$ solves a transport equation of the form
\begin{align*}
    Xu + au = -f \qquad (SM), \qquad u|_{\partial_- SM} = 0, \qquad (\text{so that } u|_{\partial_+ SM} = I_a f),
\end{align*}
and if $w$ is a solution of $X w = -a$ with $\rho := w|_{\partial_+ SM}$, then the function $u' = u e^{-w}$ satisfies
\begin{align*}
    X u' = - f e^{-w}\quad (SM), \quad u'|_{\partial_+ SM} = e^{-\rho} I_a f, \qquad u|_{\partial_- SM} = 0. 
\end{align*}
In addition, if $v = h_\psi$ is as in \eqref{eq:hpsi}, then the integration by parts above applied to $u'$ and $v$ yields: 
\begin{align}
    \dprod{e^{-w} f}{v}_{SM} = \dprod{e^{-\rho} I_a f}{h\ \cos\alpha}_{\partial_+ SM}.
    \label{eq:IBPatt} 
\end{align}


\section{Gauges and uniqueness. Proofs of Theorems \ref{thm:rep}, \ref{thm:repinf} and \ref{thm:injectivity}.} \label{sec:proofs124}

{\bf Preliminaries.} Recall the notation $\eta_+ = e^{i\theta} \partial$ and $\eta_- = e^{-i\theta}\overline{\partial}$, and note that $X = \eta_+ + \eta_-$ while $X_\perp = \frac{1}{i} (\eta_+ - \eta_-)$. By virtue of the fact that $[\eta_+, \eta_-] = 0$ and $\eta_+^* = -\eta_-$, we have the following property: 
\begin{align}
    \|\eta_- u\|^2 = \|\eta_+ u\|^2, \qquad \forall u\in \C_0^\infty(SM).
    \label{eq:property}
\end{align}
By density, we can extend this property to integrands of the form $u = e^{ik\theta} v(\x)$, with $v\in H^1_0(M)$. In addition, for any such integrand, we have
\begin{align*}
    \|\eta_+ (e^{ik\theta} v(\x))\|^2 = 2\pi \|\partial v\|^2_{M} &= \frac{\pi}{2} \int_M (v_x - iv_y) (\bar{v}_x + i \bar{v}_y)\ d\x \\
    &= \frac{\pi}{2} \left( \|\nabla v\|_{M}^2 + i\cancel{ \int_M (v_x \bar{v}_y - v_y \bar{v}_x)\ d\x} \right),
\end{align*}
where the last term vanishes via Green's formula. In particular, using the Poincar\'e constant $C_P$ defined in \eqref{eq:CP}, we have for any $u = e^{ik\theta} v(\x)$ with $v\in H^1_0(M)$ and $k$ integer, 
\begin{align*}
    \|u\|^2 = 2\pi \|v\|^2_{M} \le 2\pi C_P \|\nabla v\|^2_{M} = 8\pi C_P \|\partial v\|_M^2 = 4C_p \| \eta_+ u \|^2, 
\end{align*}
and similarly $\|u\|^2_{L^2(SM)} \le 4C_p \| \eta_- u \|^2_{L^2(SM)}$. These estimates will be useful below to control the growth of constants.  

\begin{lemma}\label{lem:decomp} Let $f_k\in H_{k}$ for $k\ge 2$. Then there exists $g_k\in \hd_k$, $v_{k-1}\in H_{k-1}$ with $v_{k-1,\pm}\in H^1_0(M)$, and $w_{k-2}\in H_{k-2}$, such that
    \begin{align*}
	f_k = X v_{k-1} + w_{k-2} + g_k. 
    \end{align*}
Moreover, we have the continuity estimate $\|v_{k-1}\|^2 \le 4C_P \|f_k\|^2$ and 
    \begin{align*}
	\|w_{k-2}\|^2 + \|g_k\|^2 = \|f_k\|^2 \quad (k>2), \qquad \|w_{k-2}\|^2 + \|g_k\|^2 &\le 2\|f_k\|^2 \quad (k=2).
    \end{align*}    
\end{lemma}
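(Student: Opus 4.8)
The plan is to split the problem into the positive and negative circular harmonics and treat each separately, since $f_k \in H_k$ means $f_k = f_{k,+}e^{ik\theta} + f_{k,-}e^{-ik\theta}$. Focus on the $+k$ mode $f_{k,+}e^{ik\theta}$; the $-k$ mode is handled by conjugating the constructions (interchanging $\partial \leftrightarrow \dbar$, $\eta_+\leftrightarrow\eta_-$). Recall $X = \eta_+ + \eta_-$ with $\eta_+ = e^{i\theta}\partial$ raising the harmonic degree and $\eta_- = e^{-i\theta}\dbar$ lowering it. I want to peel off from $f_{k,+}e^{ik\theta}$ a term $X v$ with $v \in H_{k-1}$, so that what remains in degree $+k$ is in $\ker\dbar$ (i.e. lands in $\hd_{k,+}$), while the term $X v$ also produces a contribution of degree $k-2$ that gets absorbed into $w_{k-2}$.

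The key computation: take $v_{k-1,+} \in H^1_0(M)$ and set $v = v_{k-1,+}e^{i(k-1)\theta}$; then $Xv = (\partial v_{k-1,+})e^{ik\theta} + (\dbar v_{k-1,+})e^{i(k-2)\theta}$. So $f_{k,+}e^{ik\theta} - Xv$ has $+k$-component $(f_{k,+} - \partial v_{k-1,+})e^{ik\theta}$. To kill the $\dbar$ of this, I need $\dbar f_{k,+} = \dbar\partial v_{k-1,+} = \tfrac14\Delta v_{k-1,+}$, i.e. $v_{k-1,+}$ solves the Dirichlet problem $\tfrac14\Delta v_{k-1,+} = \dbar f_{k,+}$ in $M$, $v_{k-1,+}|_{\partial M}=0$. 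This has a unique solution in $H^1_0(M)$ (since $\dbar f_{k,+}\in H^{-1}(M)$), and then $g_{k,+} := f_{k,+}-\partial v_{k-1,+} \in L^2(\ker\dbar)$, so $g_{k,+}e^{ik\theta}\in\hd_{k,+}$, while the leftover degree-$(k-2)$ piece $-(\dbar v_{k-1,+})e^{i(k-2)\theta}$ is put into $w_{k-2}$. Doing the same for the $-k$ mode yields $v_{k-1,-}\in H^1_0(M)$ solving $\tfrac14\Delta v_{k-1,-} = \partial f_{k,-}$, completing $v_{k-1}\in H_{k-1}$, $g_k\in\hd_k$, and $w_{k-2}\in H_{k-2}$.

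For the estimates: from $\tfrac14\Delta v_{k-1,+} = \dbar f_{k,+}$, multiply by $\bar v_{k-1,+}$ and integrate by parts to get $\|\nabla v_{k-1,+}\|_M^2 = -4\,\mathrm{Re}\langle\dbar f_{k,+}, v_{k-1,+}\rangle_M = 4\,\mathrm{Re}\langle f_{k,+}, \partial v_{k-1,+}\rangle_M$ (or better, use that $\|\partial v_{k-1,+}\|_M^2 = \|\dbar v_{k-1,+}\|_M^2$ via \eqref{eq:property}/Green and that $\dbar v_{k-1,+}$ is the projection-type quantity), giving $\|\partial v_{k-1,+}\|_M \le \|f_{k,+}\|_M$ hence by the Poincaré-type bound derived above, $\|v_{k-1,+}e^{i(k-1)\theta}\|^2 \le 4C_P\|\partial v_{k-1,+}e^{i(k-1)\theta}\|^2 \le 4C_P\|f_{k,+}e^{ik\theta}\|^2$; summing the $\pm$ pieces gives $\|v_{k-1}\|^2\le 4C_P\|f_k\|^2$. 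For the relation between $\|w_{k-2}\|$, $\|g_k\|$ and $\|f_k\|$: when $k>2$, $w_{k-2}$ lives in degrees $\pm(k-2)$ and $g_k$ in degrees $\pm k$, which are orthogonal to each other, and one checks $f_{k,+} = g_{k,+} + \partial v_{k-1,+}$ is an orthogonal decomposition in $L^2(M)$ (since $g_{k,+}\in\ker\dbar$ and $\partial v_{k-1,+}\perp\ker\dbar$ as $\partial v_{k-1,+}$ is in the range of $\partial$ on $H^1_0$, which is the orthocomplement of $\ker\dbar$ — this uses $\langle\partial v,\phi\rangle_M = -\langle v,\dbar\phi\rangle_M = 0$ for $\phi\in\ker\dbar$, $v\in H^1_0$), so $\|f_{k,+}\|_M^2 = \|g_{k,+}\|_M^2 + \|\partial v_{k-1,+}\|_M^2 = \|g_{k,+}\|_M^2 + \|\dbar v_{k-1,+}\|_M^2$ and summing over $\pm$ gives $\|f_k\|^2 = \|g_k\|^2 + \|w_{k-2}\|^2$ exactly. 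When $k=2$, the degrees $\pm(k-2) = 0$ coincide: $w_0$ has a single component $-\dbar v_{1,+} - \partial v_{1,-}$, and the cross term prevents an exact identity, but $\|w_0\|^2 \le 2(\|\dbar v_{1,+}\|^2 + \|\partial v_{1,-}\|^2)$-type reasoning together with the orthogonality within each $\pm k$ mode yields $\|w_0\|^2 + \|g_2\|^2 \le 2\|f_2\|^2$.

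The main obstacle is bookkeeping the orthogonality structure cleanly — specifically verifying that $L^2(M) = L^2(\ker\dbar)\oplus \overline{\partial(H^1_0(M))}$ is an orthogonal decomposition so that $f_{k,+} = g_{k,+} + \partial v_{k-1,+}$ is the corresponding split, which is what makes $\|w_{k-2}\|^2 + \|g_k\|^2 = \|f_k\|^2$ an equality rather than merely an inequality for $k>2$ — and then carefully handling the $k=2$ case where the two leftover harmonic degrees collapse onto $H_0$ and only an inequality survives.
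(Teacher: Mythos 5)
Your proposal is correct and follows essentially the same route as the paper: the paper invokes the elliptic decomposition $f_{k,\pm} = \partial v_{k-1,\pm} + g_{k,\pm}$ (resp. $\dbar$) with $v_{k-1,\pm}\in H^1_0(M)$ and $g_{k,\pm}$ anti-/holomorphic as a known fact, which is exactly what your Dirichlet problem $\tfrac14\Delta v_{k-1,+}=\dbar f_{k,+}$ constructs, and then uses the same orthogonality $\partial(H^1_0)\perp\ker\dbar$, the identity $\|\partial v\|_M=\|\dbar v\|_M$ on $H^1_0$, the Poincar\'e bound, and the $(a+b)^2\le 2(a^2+b^2)$ trick for the collapsed $k=2$ case. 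No gaps.
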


\begin{proof} Let $f_k\in H_k$ and write $f_k = f_{k,+} e^{ik\theta} + f_{k,-} e^{-ik\theta}$. Using the elliptic decompositions associated with $\partial, \overline{\partial}$ operators, write
    \begin{align*}
	f_{k,+} &= \partial v_{k-1,+} + g_{k,+}, \qquad g_{k,+} \in L^2(M),\qquad \overline{\partial} g_{k,+} = 0, \qquad v_{k-1,+} \in H^1_0(M), \\
	f_{k,-} &= \overline{\partial} v_{k-1,-} + g_{k,-}, \qquad g_{k,-}\in L^2(M), \qquad \partial g_{k,-} = 0, \qquad v_{k-1,-}\in H^1_0(M),
    \end{align*}
    together with estimates
    \[ \|f_{k,+}\|_M^2 = \|\partial v_{k-1,+}\|_M^2 + \|g_{k,+}\|_M^2, \qquad \|f_{k,-}\|_M^2 = \|\overline{\partial} v_{k-1,-}\|_M^2 + \|g_{k,-}\|_M^2. \]
    Setting $g_k = g_{k,+}e^{ik\theta} + g_{k,-}e^{-ik\theta}$ and $v_{k-1} = v_{k-1,+} e^{i(k-1)\theta} + v_{k-1,-} e^{-i(k-1)\theta}$, this exactly means 
    \begin{align*}
	f_k &= \eta_+ (v_{k-1,+}e^{i(k-1)\theta}) + \eta_- (v_{k-1,-}e^{-i(k-1)\theta}) + g_k \\
	&= X v_{k-1} + g_k - \eta_- (v_{k-1,+}e^{i(k-1)\theta}) - \eta_+ (v_{k-1,-}e^{-i(k-1)\theta}). \qquad (X = \eta_+ + \eta_-)
    \end{align*}
    Setting $w_{k-2} := - \eta_- (v_{k-1,+}e^{i(k-1)\theta}) - \eta_+ (v_{k-1,-}e^{-i(k-1)\theta}) \in H_{k-2}$ with 
    \begin{align*}
	w_{k-2,-} = - \partial v_{k-1,-} , \qquad w_{k-2,+} = - \overline{\partial} v_{k-1,+}.
    \end{align*}
    If $k>2$, the sum is orthogonal and we have
    \begin{align*}
	\|w_{k-2}\|^2 + \|g_{k}\|^2 &= 2\pi \left( \|\partial v_{k-1,-}\|_M^2 + \|\overline{\partial} v_{k-1,+}\|_M + \|g_{k,+}\|_M^2 + \|g_{k,-}\|_M^2 \right) \\
	&= 2\pi \left( \|\overline{\partial} v_{k-1,-}\|_M^2 + \|\partial v_{k-1,+}\|_M + \|g_{k,+}\|_M^2 + \|g_{k,-}\|_M^2 \right) = \|f_k\|^2.
    \end{align*}
    The case $k=2$ is a direct consequence of the inequality $(a+b)^2 \le 2 (a^2 + b^2)$. On to the stability estimate for $v_{k-1}$, we have 
    \begin{align*}
	\|v_{k-1}\|^2 = 2\pi (\|v_{k-1,+}\|_M^2 + \|v_{k-1,-}\|_M^2)&\le 2\pi C_P (\|\nabla v_{k-1,+}\|_M^2 + \|\nabla v_{k-1,-}\|_M^2) \\
	&\le 4C_P\ 2\pi (\|\partial v_{k-1,+}\|_M^2 + \|\overline{\partial} v_{k-1,-}\|_M^2) \\
	&\le 4C_P \|f_k\|^2,
    \end{align*}
    hence the proof.
\end{proof}

We now prove Theorem \ref{thm:rep}.

\begin{proof}[Proof of Theorem \ref{thm:rep}] The case $m=0$ is trivial, with $g_0=f_0$ and continuity constant $C=1$. For the case $m=1$, writing $f = f_0 + f_1$, we decompose
    \begin{align*}
	f_{1,+} = \partial v_{0,+} + g_{1,+}, \qquad f_{1,-} = \overline{\partial} v_{0,-} + g_{1,-}, 
    \end{align*}
    where $v_{0,\pm}\in H^1_0(M)$ and $g_{1,\pm}\in L^2(M)$ with $\partial g_{1,-} = \overline{\partial} g_{1,+} = 0$, with stability estimates
    \begin{align}
	\|\partial v_{0,+}\|_M^2 + \|g_{1,+}\|_M^2 = \|f_{1,+}\|_M^2, \qquad \|\overline{\partial} v_{0,-}\|_M^2 + \|g_{1,-}\|_M^2 = \|f_{1,-}\|_M^2.
	\label{eq:norms}
    \end{align}
    With the identity
    \begin{align*}
	\eta_+ v_{0,+} + \eta_- v_{0,-} = X \left( \frac{v_{0,+} + v_{0,-}}{2} \right) + X_\perp \left( i\frac{v_{0,+}-v_{0,-}}{2} \right),
    \end{align*}
    we define $g_p := \frac{v_{0,+} + v_{0,-}}{2}\in H^1_0(M)$ and $g_s := i\frac{v_{0,+}-v_{0,-}}{2} \in H^1_0(M)$, so that we may rewrite $f$ as 
    \begin{align*}
	f = f_0 + X g_p + X_\perp g_s + g_1.
    \end{align*}
    Then the transport equation $Xu + au = -f$ can be rewritten as 
    \begin{align*}
	X (u + g_p) + a(u + g_p) = - (f_0 - a g_p) - X_\perp g_s - g_1 \quad (SM), \qquad (u+g_p)|_{\partial_- SM} = 0.
    \end{align*}
    Upon defining $g_0 := f_0 - a g_p$, and since $g_p$ vanishes on $\partial SM$, we have 
    \begin{align*}
	I_a (g_0 + X_\perp g_s + g_1) = (u+ g_p)|_{\partial_+ SM} = u|_{\partial_+ SM} = I_af. 
    \end{align*}
    Now for the estimation, we have by orthogonality
    \begin{align*}
	\|g\|^2 = \|f_0 - a g_p\|^2 + \|X_\perp g_\perp\|^2 + \|g_1\|^2.
    \end{align*}
    By definition,
    \begin{align*}
	\|f_0 - a g_p\|^2 &\le 2 (\|f_0\|^2 + \frac{a_\infty^2}{4} \|v_{0,+} + v_{0,-}\|^2) \\
	&\le 2\|f_0\|^2 + a_\infty^2 (\|v_{0,+}\|^2 + \|v_{0,-}\|^2) \\
	&\le 2\|f_0\|^2 + 4 a_\infty^2 C_P(\|\eta_+ v_{0,+}\|^2 + \|\eta_- v_{0,-}\|^2) \\
	&\le 2\|f_0\|^2 + 4 a_\infty^2 C_P \|f_1\|^2.
    \end{align*}
    Regarding $X_\perp g_s$, we have, using \eqref{eq:property}
    \begin{align*}
	\|X_\perp g_s\|^2  &= 2\|\eta_+ g_s\|^2 \le \|\eta_+ v_{0,+}\|^2 + \|\eta_- v_{0,-}\|^2, 
    \end{align*}
    so that, using \eqref{eq:norms},
    \begin{align*}
	\|X_\perp g_s\|^2 + \|g_{1}\|^2 \le \|f_{1}\|^2. 
    \end{align*}
    Combining all estimates, we obtain
    \begin{align}
	\|g\|^2 \le 2 \|f_0\|^2 + (1+4a_\infty^2 C_P)\|f_1\|^2, 
	\label{eq:estm1}
    \end{align}
    as advertised in \eqref{eq:repest}. We now prove all cases $m\ge 2$ by induction. 


    \noindent For $m \ge 2$ and $f\in L_{(m)}^2(SM)$, write $f = f' + f_m$, with $f'\in L_{(m-1)}^2(SM)$. By Lemma \ref{lem:decomp}, we decompose $f_m$ into
    \begin{align*}
	f_m = X v_{m-1} + w_{m-2} + g_m, 
    \end{align*}
    where $g_m\in \hd_m$, $v_{m-1} \in H_{m-1}$ with $v_{m-1,\pm}\in H_0^1(M)$ and $w_{m-2}\in H_{m-2}$. Then the transport equation 
  \begin{align*}
      Xu + au = -f = - f' - w_{m-2} - g_{m} - X v_{m-1}, 
  \end{align*}
  can be rewritten in the form 
  \begin{align*}
      X(u+v_{m-1}) + a (u+v_{m-1}) = - h,
  \end{align*}
  where
  \begin{align*}
      h := f' + w_{m-2} - a v_{m-1} + g_m \in \bigoplus_{k=0}^{m-1} H_k \oplus \hd_m.
  \end{align*}
  Since $v_{m-1}$ vanishes at the boundary, $h$ satisfies
  \[ I_a h = (u + v_{m-1})|_{\partial_+ SM} = u|_{\partial_+ SM} = I_a f. \]
  We now build the estimate, separating the cases $m=2$ and $m>2$. For $m=2$, we have, by orthogonality of Fourier modes, 
  \begin{align*}
      \|h\|^2 &= \|f_0 + w_0\|^2 + \|f_{1} - a v_{1}\|^2 + \|g_2\|^2.
  \end{align*}
  Bounding each term separately and using the estimates from Lemma \ref{lem:decomp}:
  \begin{align*}
      \|h_0\|^2 + \|h_2\|^2 &= \|f_0 + w_0\|^2 + \|g_2\|^2 \le 2 (\|f_0\|^2 + \|w_0\|^2) + \|g_2\|^2 \le 2 \|f_0\|^2 + 4\|f_2\|^2, \\
      \|h_1\|^2 &= \|f_{1} - a v_{1}\|^2 \le 2(\|f_{1}\|^2 + a_\infty^2 \|v_1\|^2 )\le 2(\|f_{1}\|^2 + 4 a_\infty^2 C_P \|f_2\|^2).
  \end{align*}
  At this point, the component $h_0 + h_1$ does not have the desired form, and we therefore apply the case $m=1$ to it, to obtain the existence of $g' = g_0 + X_\perp g_\perp + g_1$, such that $I_a g' = I_a [h_0 + h_1]$ and, by \eqref{eq:estm1},
  \begin{align*}
      \|g'\|^2 \le 2\|h_0\|^2 + (1+4a_\infty^2 C_P)\|h_1\|^2. 
  \end{align*}
  Now defining $g := g' + g_2$, $g$ has the desired form and we clearly have $I_a g = I_a h = I_a f$. Moreover, combining the last two estimates displayed,
  \begin{align}
      \|g\|^2 &\le 2\|h_0\|^2 + (1+4a_\infty^2 C_P)\|h_1\|^2 + \|g_2\|^2 \\
      &\le 4\|f_0\|^2 + 2(1+4a_\infty^2 C_P) \|f_1\|^2 + 2 ((1+4a_\infty^2 C_P) 4a_\infty^2 C_P+4) \|f_2\|^2. 
  \end{align}
  This in particular satisfies the base case $m=2$ for estimate \eqref{eq:repest}. Suppose now that $m\ge 3$ and that we already decomposed $f_m$ and defined $h$ as in the case $m=2$. We have, by orthogonality
  \begin{align*}
      \|h\|^2 = \|g_m\|^2 + \|f_{m-1} - a v_{m-1}\|^2 + \|f_{m-2} + w_{m-2}\|^2 + \sum_{k=0}^{m-3} \|f_k\|^2,
  \end{align*}
  where, using Lemma \ref{lem:decomp}, the following estimates hold
  \begin{align*}
      \|f_{m-1} - a v_{m-1}\|^2 &\le 2 (\|f_{m-1}\|^2 + a_\infty^2 \|v_{m-1}\|^2) \le 2 (\|f_{m-1}\|^2 + 4a_\infty^2 C_P \|f_{m}\|^2),  \\
      \|f_{m-2} + w_{m-2}\|^2 + \|g_m\|^2 &\le 2 (\|f_{m-2}\|^2 + \|w_{m-2}\|^2) + \|g_m\|^2 \le 2 (\|f_{m-2}\|^2 + \|f_m\|^2),
  \end{align*}
  and for $k\le m-3$, $h_k = f_k$. At this point, the term $h'= \sum_{k=0}^{m-1} h_k$ does not have the desired form, thus we apply the induction step to it, so that there exists $g'\in L^2_{(m-1)}(SM)$ of the desired form, such that $I_a g' = I_a h'$ with estimate 
  \begin{align*}
      \|g'\|^2 \le 4 \sum_{k=0}^{m-3} C^k \|h_k\|^2 + 2 C^{m-2} \|h_{m-2}\|^2 + C^{m-1} \|h_{m-1}\|^2.
  \end{align*}
  Thus upon defining $g = g' + g_m$, we clearly have $I_a g = I_a h = I_a f$, with estimate
  \begin{align*}
      \|g\|^2 = \|g'\|^2 + \|g_m\|^2 &\le 4 \sum_{k=0}^{m-3} C^k \|h_k\|^2 + 2 C^{m-2} \|h_{m-2}\|^2 + C^{m-1} \|h_{m-1}\|^2 + \|g_m\|^2 \\
      &\le 4 \sum_{k=0}^{m-3} C^k \|f_k\|^2 + 2 C^{m-2}\ 2(\|f_{m-2}\|^2 + \|f_m\|^2) \dots\\
      &\qquad \qquad \qquad + C^{m-1} 2(\|f_{m-1}\|^2 +4a_\infty^2 C_P \|f_m\|^2) \\
      &\le 4\sum_{k=0}^{m-2}  C^k \|f_k\|^2 + 2 C^{m-1} \|f_{m-1}\|^2 + C^{m-1} (4/C + 8a_\infty^2 C_P ) \|f_m\|^2,
  \end{align*}
  so that, since $C = 4 + 8a_\infty^2 C_P$, the last term is bounded by $C^m \|f_m\|^2$, and the hypothesis is reconducted. The proof of Theorem \ref{thm:rep} is complete.
\end{proof}

The generalization of Theorem \ref{thm:rep} to integrands with infinite harmonic content with sufficient decay is then immediate. 

\begin{proof}[Proof of Theorem \ref{thm:repinf}] Let $f\in L^{2,C}(SM)$. For $m\ge 1$, let us denote $f^{(m)}:= \sum_{k=0}^m f_k\in L^2_{(m)}(SM)$. Let $g^{(m)}\in L_{(m)}^2(SM)$ as in Theorem \ref{thm:rep} such that $I_a g^{(m)} = I_a f^{(m)}$ satisfying
    \begin{align}
	\|g^{(m)}\|^2 \le 4 \sum_{p=0}^{m} C^p \|f^{(m)}_p\|^2.
	\label{eq:tmp5}
    \end{align}
    By linearity, we have 
    \begin{align*}
	\|g^{(m+n)} - g^{(m)}\|^2 \le 4 \sum_{p=0}^{m+n} C^p \|f^{(m+n)}_p - f^{(m)}_p\|^2 \le 4 \sum_{p=m+1}^{m+n} C^p \|f_p\|^2 \le 4 \sum_{p=m+1}^{\infty} C^p \|f_p\|^2.
    \end{align*}
    Since the right-hand-side converges to zero as $m\to \infty$ regardless of $n$, the sequence $g^{(m)}$ is Cauchy in $L^2(SM)$, thus converges to some $g\in L^2(SM)$. By continuity of $I_a:L^2(SM)\to L^2(\partial_+ SM)$, we have 
    \begin{align*}
	I_a g = \lim_{m\to \infty} I_a g^{(m)} = \lim_{m\to \infty} I_a f^{(m)} = I_a f.
    \end{align*}
    The form \eqref{eq:ginf} of $g$ is inherited from the form \eqref{eq:g} of each $g^{(m)}$ and the fact that each summand belongs to a closed subspace of $L^2(SM)$. Estimate \eqref{eq:repestinf} follows from sending $m\to \infty$ in \eqref{eq:tmp5}.     
\end{proof}

Assuming Theorem \ref{thm:reconstruction}, we now provide a brief proof of Theorem \ref{thm:injectivity}.

\begin{proof}[Proof of Theorem \ref{thm:injectivity}] Let $f\in L^2_{(m)}(SM)$ and define $u$ such that 
  \begin{align*}
    Xu + au = -f, \quad u|_{\partial_- SM} = 0, \qquad u|_{\partial_+ SM} = I_a f = 0.
  \end{align*}  
  Examining the proof of Theorem \ref{thm:rep} closer, we prove that there exists $v$ of degree $m-1$ with components in $H^1_0(M)$, vanishing at $\partial SM$ such that 
  \begin{align*}
    X(u-v) + a(u-v) = -g,
  \end{align*}
  with $g$ as in \eqref{eq:g} such that $I_a g = I_a f = 0$.  From Theorem \ref{thm:reconstruction}, this implies $g=0$. Then $X(u-v) + a(u-v) = 0$ with $u-v$ vanishing at $\partial SM$ implies $u-v = 0$. In particular, $u$ has degree $m-1$, vanishes at $\partial SM$, and satisfies $Xu + au = -f$.   
\end{proof}


\section{Reconstruction. Proof of Theorem \ref{thm:reconstruction}} \label{sec:reconstruction}

\subsection{Data space and boundary operators}\label{sec:boundary}

As introduced in \cite{Monard2015a}, the space $L^2(\partial_+ SM)$ can be given two Hilbert bases 
\begin{align*}
  \B = \{ \phi_{p,q},\ p,q\in \Zm \}, \quad \text{and } \quad \B' = \{ \phi'_{p,q}:= e^{i\alpha} \phi_{p,q},\ p,q\in \Zm \}.
\end{align*}
where we have defined $\phi_{p,q}(\beta,\alpha):= \frac{1}{\pi\sqrt{2}} e^{i(p\beta+2q\alpha)}$.
For further use, we define
\begin{align*}
  u_{p,q} &= \phi_{p,q} + (-1)^p \phi_{p,p-q} = (Id + \SS_A^*) \phi_{p,q}, \\
  v_{p,q} &= \phi_{p,q} - (-1)^p \phi_{p,p-q} = (Id - \SS_A^*) \phi_{p,q}, \\
  u'_{p,q} &= \phi'_{p,q} + (-1)^p \phi'_{p,p-q-1} = (Id + \SS_A^*) \phi'_{p,q}, \\
  v'_{p,q} &= \phi'_{p,q} - (-1)^p \phi'_{p,p-q-1} = (Id - \SS_A^*) \phi'_{p,q},
\end{align*} 
where $\SS_A^* \phi(\beta,\alpha) = \phi(\beta+\pi+2\alpha,-\alpha)$ is the pullback of $\phi$ by the antipodal scattering relation. In particular, we have the splitting $L^2(\partial_+ SM) = \V_+ \oplus \V_-$, where $\V_+ := \ker (Id-\SS_A^*)$ is spanned by either $\{u_{p,q}\}$ or $\{u'_{p,q}\}$, and $\V_- := \ker (Id + \SS_A^*)$ is spanned by either $\{v_{p,q}\}$ or $\{v'_{p,q}\}$. Recall that 
\begin{align}
    P:= A_-^* H A_+ :L^2(\partial_+ SM)\to L^2(\partial_+ SM),    
    \label{eq:P}
\end{align}
splits into $P = P_+ + P_-$ upon defining $P_\pm = A_-^* H_\pm A_+:\V_\pm\to \V_\mp$, and that from \cite{Pestov2004}, Range $I_0 = $ Range $P_-$ and Range $I_\perp =$ Range $P_+$ in smooth topologies. It was then proved in \cite{Monard2015a} that in the $L^2(\partial_+ SM)\to L^2(\partial_+ SM)$ setting, the singular value decompositions of $P_\pm$ makes them roughly $L^2\to L^2$ isometries onto their respective ranges. To be more specific, let us define: 
\begin{align*}
    \V_{+,0} &:= \left\langle u'_{p,q}, \quad 2q > -1, \quad 2p<2q+1 \right\rangle = \text{Range } P_- \\
    \V_{-,\perp} &:= \left\langle v_{p,q}, \quad q\ge 0, \quad p\le q \right\rangle = \text{Range } P_+.
\end{align*}
These subspaces of $L^2(\partial_+ SM)$ capture exactly the modes achieved by $I_0$ and $I_\perp$, and describing the range of these operators there would only require describing rates of decay in the Fourier coefficients. It is also immediate to find that, in the functional setting of \eqref{eq:P}, 
\begin{align}\label{eq:Pstardef}
    P^*_{\pm} = - A_+^* H_\pm A_- : \V_\mp\to \V_\pm.
\end{align}
We now write an explicit right-inverse for the operator $P$ on $\V_{+,0}\oplus \V_{-,\perp}$. In what follows, we recall the definition of the operator $C:= \frac{1}{2}A_-^* H A_-$, which upon splitting $H = H_+ + H_-$, splits accordingly into $C_+:\V_-\to \V_-$ and $C_-:\V_+\to \V_+$.

\begin{lemma}\label{lem:Pdagger} The operators $P_-^\dagger := \frac{1}{4} P_-^*$, $P_+^\dagger:= \frac{1}{4} P_+^* (Id - 12 C_+^2)$ and $P^\dagger:= P_+^\dagger + P_-^\dagger$ are such that $P_-P_-^\dagger$, $P_+P_+^\dagger$ and $PP^\dagger$ are the $L^2(\partial_+ SM)$-orthogonal projections onto $\V_{+,0}$, $\V_{-,\perp}$ and $\V_{+,0} \oplus \V_{-,\perp}$, respectively.
\end{lemma}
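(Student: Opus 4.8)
The plan is to read the statement off the explicit singular value decompositions of $P_\pm$ and the diagonalizations of $C_\pm$ established in \cite{Monard2015a}, working in the bases $\B,\B'$ of $L^2(\partial_+ SM)$. First I would record how $P_\pm$, $P_\pm^*$ and $C_\pm$ act on these bases: using the definition of $H$ on circular harmonics together with the relabelling action of $\SS^*$ and $\SS_A^*$ on $\{\phi_{p,q}\}$, $\{\phi'_{p,q}\}$ recalled above (and the explicit $A_\pm$, $A_\pm^*$), one finds that $P_-$ maps the family $\{v'_{p,q}\}$ spanning $\V_-$ to $\{u'_{p,q}\}$ spanning $\V_+$ ``diagonally'' up to the built-in index shift, the nonzero images being exactly those indexed by the set defining $\V_{+,0}$ (this is the assertion Range $P_-=\V_{+,0}$), with all nonzero singular values equal to $2$; likewise $P_+$ acts diagonally from $\{u_{p,q}\}$ to $\{v_{p,q}\}$ with nonzero images indexing $\V_{-,\perp}$, and $C_\pm$ are diagonal in the families $\{v_{p,q}\}$, $\{u_{p,q}\}$ respectively.

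From this the conclusions for $P_\pm^\dagger$ follow. For the $-$ case: since Range $P_-=\V_{+,0}$ one has $\ker P_-^*=\V_{+,0}^\perp$ (orthocomplement within $\V_+$), so $P_-P_-^*$ vanishes there, while on $\V_{+,0}$ it equals $4\,Id$ because the nonzero singular values of $P_-$ are all $2$; hence $P_-P_-^\dagger=\tfrac14 P_-P_-^*$ is precisely $\Pi_{\V_{+,0}}$, the $L^2(\partial_+SM)$-orthogonal projection onto $\V_{+,0}$. The $+$ case is where the correction factor is needed: $P_+P_+^*:\V_-\to\V_-$ is again diagonal in $\{v_{p,q}\}$, vanishing on $\V_{-,\perp}^\perp$ but with an index-dependent eigenvalue $\lambda_{p,q}$ on $\V_{-,\perp}$ (the sense in which $P_+$ is only ``roughly'' an isometry). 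Since $C_+$ is diagonal in the same family with eigenvalue $\mu_{p,q}$, so is $Id-12C_+^2$, which preserves both $\V_{-,\perp}$ and $\V_{-,\perp}^\perp$; thus $P_+P_+^\dagger=\tfrac14 P_+P_+^*(Id-12C_+^2)$ is diagonal in $\{v_{p,q}\}$, acts as $0$ on $\V_{-,\perp}^\perp$, and acts as multiplication by $\tfrac14\lambda_{p,q}(1-12\mu_{p,q}^2)$ on $v_{p,q}\in\V_{-,\perp}$. Hence $P_+P_+^\dagger=\Pi_{\V_{-,\perp}}$ exactly when the numerical identity $\lambda_{p,q}(1-12\mu_{p,q}^2)=4$ holds throughout the index set of $\V_{-,\perp}$, which one checks from the explicit eigenvalues.

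Finally, with respect to $L^2(\partial_+SM)=\V_+\oplus\V_-$ both $P$ and $P^\dagger$ are block-antidiagonal,
\[
P=\begin{pmatrix}0 & P_-\\ P_+ & 0\end{pmatrix},\qquad
P^\dagger=\begin{pmatrix}0 & P_+^\dagger\\ P_-^\dagger & 0\end{pmatrix},
\]
so $PP^\dagger$ acts as $P_-P_-^\dagger=\Pi_{\V_{+,0}}$ on $\V_+$ and as $P_+P_+^\dagger=\Pi_{\V_{-,\perp}}$ on $\V_-$, which together are the orthogonal projection onto $\V_{+,0}\oplus\V_{-,\perp}$ because $\V_{+,0}\subseteq\V_+$ and $\V_{-,\perp}\subseteq\V_-$ are orthogonal. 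I expect the one genuinely laborious step to be pinning down the eigenvalues $\lambda_{p,q}$ of $P_+P_+^*$ and $\mu_{p,q}$ of $C_+$ from the explicit action of $A_\pm$, $A_\pm^*$, $H$ and $\SS_A^*$ on the Fourier basis---bookkeeping inherited from \cite{Monard2015a}---and verifying that $Id-12C_+^2$ flattens $\lambda_{p,q}(1-12\mu_{p,q}^2)$ to the constant $4$; the $-$ case and the block assembly are then routine.
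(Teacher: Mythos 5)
Your proposal is correct and follows essentially the same route as the paper: both diagonalize $P_\pm$, $P_\pm^*$ and $C_+$ in the bases $\{u_{p,q}\},\{v_{p,q}\},\{u'_{p,q}\},\{v'_{p,q}\}$ (quoting the explicit spectral data from \cite{Monard2015a}), check that $\tfrac14 P_-P_-^*$ acts as the identity on $\V_{+,0}$ and zero on its complement, and that the factor $Id-12C_+^2$ compensates the half spectral values of $P_+$, before assembling the block-antidiagonal structure. The numerical identity you leave to verify does hold: where $P_+$ has singular value $1$ the eigenvalue of $C_+$ is $\pm i/2$, so $1-12(\pm i/2)^2=4$, while where it has singular value $2$ one has $C_+=0$.
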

In particular, $P^\dagger$ is a right inverse for $P$ on the ranges of $I_0$ and $I_\perp$. 

\begin{proof}
    The following calculations are worked out in \cite[Propositions 1, 2]{Monard2015a}\footnote{Note the typo in \cite{Monard2015a} that the spectral value $i$ of $P_+$ is NOT achieved and should be $-i$ as well.}:
    \begin{align*}
	P_+ u_{p,q} &= -i (\sgn{2q} - \sgn{2(p-q)}) \ v_{p,q} \\
	&= \left\{ \begin{array}{cc}
	    -2i\ v_{p,q} & \text{if } q>0 \text{ and } p<q, \\
	    -i\ v_{p,q} & \text{if } (q>0 \text{ and } p=q) \text{ or } (q=0 \text{ and } p<0), \\
	    0 & \text{otherwise.}
	\end{array}\right. \\
	P_-  v'_{p,q} &= -i (\sgn{2q+1} - \sgn{2p-2q-1})\ u'_{p,q} \\
	&= \left\{ \begin{array}{cc}
	    -2i\ u'_{p,q} & \text{if } q>\frac{-1}{2} \text{ and } p<q+\frac{1}{2}, \\
	    0 & \text{otherwise.}
	\end{array}\right. 
    \end{align*}
    as well as 
    \begin{align*} 
	C_+ v_{p,q} &= \left\{\begin{array}{cc}
	    i\ v_{p,q} & \text{if } q<0 \text{ and } p<q, \\ 
	    -i\ v_{p,q} & \text{if } q>0 \text{ and } p>q, \\
	    \frac{i}{2}\ v_{p,q} & \text{ if } q=0 \text{ and } p<0, \\
	    \frac{-i}{2}\ v_{p,q} & \text{ if } q>0 \text{ and } p=q, \\
	    0 & \text{otherwise}. 
	\end{array}\right. 
    \end{align*} 
    Considering the adjoints 
    \begin{align*}
	P_{+}^* = - A_+^* H_+ A_- : \V_- \to \V_+, \qquad P_-^* = - A_+^* H_- A_- : \V_+ \to \V_-, 
    \end{align*}
    similar calculations allow to establish that 
    \begin{align}
	\begin{split}
	    P_+^* v_{p,q} &= i (\sgn{2q}-\sgn{2(p-q)})\ u_{p,q}, \\
	    P_-^* u'_{p,q} &= i (\sgn{2q+1} - \sgn{2p-2q-1})\ v'_{p,q}.	    
	\end{split}
	\label{eq:Pstar}
    \end{align}
    In particular we obtain
    \begin{align*}
	\frac{1}{4} P_- P_-^* u'_{p,q} &= \frac{1}{4} (\sgn{2q+1} - \sgn{2p-2q-1})^2\  u'_{p,q} \\
	&= \left\{\begin{array}{cc}
	    u'_{p,q} & \text{if } q>\frac{-1}{2} \text{ and } p<q+\frac{1}{2}, \\
	    0 & \text{otherwise,} 
	\end{array}\right.
    \end{align*}
    which means that $\frac{1}{4} P_-^*$ inverts $P_-$ on $\V_{+,0}$. For $P_+$, because of the appearing half spectral values, we need to modify slightly using $C_+$, and arrive at the following 
    \begin{align*}
	\frac{1}{4} P_+ P_+^* (Id - 12 C_+^2) v_{p,q} = \left\{ \begin{array}{cc}
	    v_{p,q} & \text{if } q>0 \text{ and } p<q, \\
	    v_{p,q} & \text{if } (q>0 \text{ and } p=q) \text{ or } (q=0 \text{ and } p<0), \\
	    0 & \text{otherwise.}
	\end{array} \right.
    \end{align*}
    So $\frac{1}{4} P_+^* (Id - 12 C_+^2)$ inverts $P_+$ on $\V_{-,\perp}$. Adding everything together, we see that the operator $P^\dagger = \frac{1}{4} P_-^* + \frac{1}{4} P_+^* (Id - 12 C_+^2) = \frac{1}{4} P^* - 3 P_+^* C_+^2$ achieves what is required. 
\end{proof}

As recorded in the proposition below, these pseudo-inverses are, in fact, the ``filters'' in the filtered-backprojection formulas inverting $I_0$ over $L^2(M)$ and $I_\perp$ over 
\begin{align*}
    \dot{H}^1 (M):= \left\{ u\in H^1(M),\ \int_0^{2\pi} u(e^{i\beta})\ d\beta = 0\right\}.
\end{align*}
The main novelty here is that the inversion formula for solenoidal one-forms allows solenoidal potentials to be supported {\em up to the boundary}. 

\begin{proposition}\label{prop:FBP} With the pseudo-inverses $P_+^\dagger$ and $P_-^\dagger$ defined in Lemma \ref{lem:Pdagger}, we have the reconstruction formulas 
    \begin{align}
	f &= -\frac{1}{2\pi} I_\perp^\sharp P_-^\dagger I_0 f, \qquad f\in L^2(M), \label{eq:rcI0}\\
	h &= \frac{1}{2\pi} I_0^\sharp P_+^\dagger I_\perp h, \qquad h\in \dot{H}^1(M). \label{eq:rcIperp}
    \end{align}     
\end{proposition}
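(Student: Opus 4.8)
The plan is to derive both reconstruction formulas from the adjoint/intertwining structure already set up, using the explicit spectral descriptions of $P_\pm, P_\pm^*$ and of the backprojection operators $I_0^\sharp, I_\perp^\sharp$ in the fan-beam bases $\B, \B'$. The key point is that the excerpt already records: (i) $\text{Range }I_0 = \V_{+,0} = \text{Range }P_-$ and $\text{Range }I_\perp = \V_{-,\perp} = \text{Range }P_+$; (ii) $P_- P_-^\dagger$ (resp.\ $P_+ P_+^\dagger$) is the orthogonal projection onto $\V_{+,0}$ (resp.\ $\V_{-,\perp}$). So the substance is to identify $-\tfrac{1}{2\pi} I_\perp^\sharp P_-^\dagger$ as a left inverse of $I_0$ and $\tfrac{1}{2\pi} I_0^\sharp P_+^\dagger$ as a left inverse of $I_\perp$ on the stated spaces.

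First I would recall (or re-derive, as in \cite{Monard2015a}) the normal-operator identities in fan-beam form, namely that $I_\perp^\sharp$ and $I_0^\sharp$ are, up to the explicit $2\pi$ constant, the formal transposes of $I_\perp, I_0$ with respect to the measures in play, and that on the relevant range spaces one has $I_\perp^\sharp I_\perp$ and $I_0^\sharp I_0$ acting diagonally in the circular-harmonic/Fourier decomposition of $\dot H^1(M)$ and $L^2(M)$. The cleanest route is through the Pestov--Uhlmann type intertwining $P = A_-^* H A_+$ together with the factorizations relating $I_0, I_\perp$ to $H$ on $SM$: schematically, $I_\perp = $ (a backprojection) $\circ\, P_+ \circ$ (the even/odd boundary data of $I_0$), and dually. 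Concretely I would: (1) compute the action of $I_0$ and $I_\perp$ on the natural spatial bases (e.g.\ $z^j\bar z^k$, or the Zernike-type basis diagonalizing the problem on the disc) and match it against the action of $P_\mp$ on $\{u'_{p,q}\}$, $\{v_{p,q}\}$ from Lemma~\ref{lem:Pdagger}; (2) compute $I_\perp^\sharp$ and $I_0^\sharp$ on the same data-space bases; (3) chain these together with Lemma~\ref{lem:Pdagger} to see that the composite equals the identity on $L^2(M)$, resp.\ $\dot H^1(M)$, with the $-\tfrac{1}{2\pi}$, resp.\ $\tfrac{1}{2\pi}$, normalization. The parity bookkeeping ($A_\pm$, the $(-1)^p$ twists in $u_{p,q}, v_{p,q}, u'_{p,q}, v'_{p,q}$, the shift $q\mapsto p-q$ vs.\ $q\mapsto p-q-1$) is what forces the asymmetric correction $Id - 12C_+^2$ in $P_+^\dagger$, and tracking it carefully is the crux.

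I expect the main obstacle to be precisely this second formula \eqref{eq:rcIperp}: handling solenoidal one-forms whose potential $h\in\dot H^1(M)$ is allowed to be nonzero up to $\partial M$. Classically one would quotient by $H^1_0(M)$ and lose the boundary layer; here one must instead verify that $I_0^\sharp P_+^\dagger I_\perp$ reproduces $h$ exactly, including its boundary trace, which relies on the half-integer spectral values of $P_+$ (the values $-i$ on the "diagonal" modes $p=q>0$ and $q=0, p<0$) being correctly rescaled by $Id - 12C_+^2$ rather than merely projected away. I would isolate these boundary modes, check the scalar identity $\tfrac14(\text{sgn}-\text{sgn})^2(1-12c^2)=1$ on the relevant eigenvalues $c\in\{\pm i,\pm i/2\}$ of $C_+$ (already implicit in the proof of Lemma~\ref{lem:Pdagger}), and confirm that $I_0^\sharp$ acting on $\V_{-,\perp}$ composed with $I_\perp$ closes the loop. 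The formula \eqref{eq:rcI0} is the easier of the two, since $P_-^\dagger = \tfrac14 P_-^*$ needs no correction, and $I_0$ over $L^2(M)$ is the familiar case; it should follow by the same computation restricted to $\V_{+,0}$.
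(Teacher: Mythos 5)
Your plan is essentially the paper's proof: \eqref{eq:rcI0} is quoted directly from the known formula $f = \frac{1}{8\pi} I_\perp^\sharp A_+^* H A_- I_0 f$, while for \eqref{eq:rcIperp} the paper, just as you propose, splits $h = h_0 + h_\partial$ with $h_0\in H^1_0(M)$ and $h_\partial$ harmonic, observes that $I_\perp h_\partial$ lands exactly on the boundary modes $v_{k,k}$ where $C_+^2 = -\tfrac14$, and verifies by the explicit chain $I_\perp z^k = -i\pi\sqrt{2}(-1)^k v_{k,k}$, $P_+^* v_{k,k} = i\,u_{k,k}$, $I_0^\sharp u_{k,k} = (-1)^k\sqrt{2}\,z^k$ that the classical constant $\frac{1}{8\pi}$ must be quadrupled on that part --- precisely the factor that $Id - 12C_+^2$ supplies. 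The one step you leave to be confirmed --- that $I_0^\sharp P_+^* I_\perp$ acts as $8\pi\, Id$ on $H^1_0(M)$ but as $2\pi\, Id$ on the harmonic complement, which does not follow from the spectral identity of Lemma \ref{lem:Pdagger} alone --- is where the real computation lives, but your plan explicitly calls for computing the operators on bases and chaining them, so the approach is sound and matches the paper's.
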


\begin{proof}[Proof of Proposition \ref{prop:FBP}] Equation \eqref{eq:rcI0} is a direct consequence of the formula $f = \frac{1}{8\pi} I_\perp^\sharp A_+^* H A_- I_0 f$ (see \cite{Monard2015a}) and the definition of $P_-^\dagger$. \\
    On to proving \eqref{eq:rcIperp}, by \cite[Lemma 4.3]{Monard2015a}, any $h\in \dot{H}^1(M)$ decomposes into $h_0 + h_\partial$, where $h_0 \in H^1_0(M)$ and $h_\partial = \sum_{k=1}^\infty a_k z^k + b_k \zbar^k$ for some coefficients $a_k, b_k$ with $\sum_{k=1}^\infty (|a_k|^2 + |b_k|^2)(1+k)<\infty$. We already know that for $h_0\in H^1_0(M)$, the following reconstruction formula holds (see, e.g., \cite[Prop. 2.2]{Monard2015})
    \begin{align*}
	h_0 = \frac{-1}{8\pi} I_0^\sharp A_+^* H A_-I_\perp h_0 = \frac{1}{8\pi} I_0^\sharp P_+^* I_\perp h_0.
    \end{align*}
    Surprisingly, the same is only true up to a factor $\frac{1}{4}$ for the term $h_\partial$, as we now show that
    \begin{align}
	h_\partial = \frac{-1}{2\pi} I_0^\sharp A_+^* H A_- I_\perp h_\partial =  \frac{1}{2\pi} I_0^\sharp P_+^* I_\perp h_\partial.
	\label{eq:rcharmonic}	
    \end{align}
    \begin{proof}[Proof of \eqref{eq:rcharmonic}] It is enough to prove it for $h_\partial = z^k$ for any integer $k\ge 1$ and \eqref{eq:rcharmonic} follows by linearity and complex conjugation. Applying the operators one at a time, we first have $I_\perp z^k = -i\pi \sqrt{2} (-1)^k v_{k,k}$ (see, e.g., \cite[Prop. 3]{Monard2015a}). Then by \eqref{eq:Pstar}, we have $P_+^* v_{k,k} = i u_{k,k}$ so that $P_+^* I_\perp z^k = \pi\sqrt{2} (-1)^k u_{k,k}$. Finally by \eqref{eq:backproj}, we have $I_0^\sharp u_{k,k} = 2\pi \left( (u_{k,k})_\psi\right)_0 = (-1)^k \sqrt{2}\ z^k$, so that 
	\begin{align*}
	    \frac{1}{2\pi} I_0^\sharp P_+^* I_\perp z^k = \frac{\pi\sqrt{2} (-1)^k}{2\pi} I_0^\sharp u_{k,k} = \frac{\sqrt{2} (-1)^k}{2} (-1)^k \sqrt{2}\ z^k = z^k, 
	\end{align*} 
	hence \eqref{eq:rcharmonic} is proved. 
    \end{proof}
    With \eqref{eq:rcharmonic} proved, we finally return to proving \eqref{eq:rcIperp}. By virtue of \cite[Prop. 3]{Monard2015a}, we have the relations  $C_+^2 I_\perp h_0 = 0$ and $C_+^2 I_\perp h_\partial = - \frac{1}{4} I_\perp h_\partial$. Combining this with both reconstruction formulas, we deduce that 
    \begin{align*}
	\frac{1}{2\pi} I_0^\sharp P_+^\dagger I_\perp (h_0 + h_\partial) &= \frac{-1}{8\pi} I_0^\sharp A_+^* H A_- I_\perp h_0 + \frac{-1}{8\pi} I_0^\sharp A_+^* H A_- (Id - 12 C_+^2) I_\perp h_\partial \\
	&= \frac{-1}{8\pi} I_0^\sharp A_+^* H A_- I_\perp h_0 + \frac{-1}{2\pi} A_+^* H A_- I_\perp h_\partial \\
	&= h_0 + h_\partial,
    \end{align*}
    hence the proof.
\end{proof}

\subsection{Holomorphic integrating factors} \label{sec:HIF}

As a direct consequence of Proposition \ref{prop:FBP}, we can construct so-called {\em holomorphic integrating factors} for functions and solenoidal one-forms explicitly. 

\begin{proposition}\label{prop:special} Let $P^\dagger$ defined in Lemma \ref{lem:Pdagger}. For any $f_0\in L^2(M)$ and $f_s \in \dot{H}^1(M)$, the function 
    \begin{align*}
	u := -i (Id+iH) (P^\dagger I(f_0 + X_\perp f_s))_\psi,
    \end{align*}
    fiberwise holomorphic by construction, satisfies
    \begin{itemize}
	\item[(i)] $Xu = -f_0 -X_\perp f_s$
	\item[(ii)] $u_0 = -i f_s$.   
    \end{itemize}
\end{proposition}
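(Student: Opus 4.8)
The plan is to verify properties (i) and (ii) directly by using the reconstruction formulas in Proposition \ref{prop:FBP} together with the structure of fiberwise-holomorphic functions built from $(Id+iH)$. First, I would observe that $u$ is fiberwise holomorphic by construction, since it has the form $(Id+iH)(\cdot)$ applied to a function on $SM$. Writing $w := (P^\dagger I(f_0 + X_\perp f_s))_\psi$, which satisfies $Xw = 0$ on $SM$ by the definition \eqref{eq:hpsi} of $(\cdot)_\psi$, I would compute $Xu = -iX(Id+iH)w = -i(Id+iH)Xw - i[X,iH]w = [H,X]w$. By the Pestov commutator identity \eqref{eq:commutator}, $[H,X]w = X_\perp w_0 + (X_\perp w)_0$. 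Since $X w=0$ forces $w = h_\psi$ with $h = (P^\dagger I(f_0+X_\perp f_s))$ on $\partial_+SM$, the fiberwise moments $w_0$ and $(X_\perp w)_0$ are exactly the backprojections: $w_0 = \frac{1}{2\pi} I_0^\sharp h$ and $(X_\perp w)_0 = \frac{1}{2\pi} I_\perp^\sharp h$ (up to the normalizing constants used in the definition of $I_0^\sharp, I_\perp^\sharp$ in the paper).

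Next I would split $h = P_-^\dagger I_0 f_0 + P_+^\dagger I_\perp X_\perp f_s$, using that $I = I_0 + \dots$ decomposes into its even and odd angular parts acting on $f_0$ (angular degree $0$) and $X_\perp f_s$ (angular degree contributions of $f_s\in\dot H^1(M)$), and that $P^\dagger = P_+^\dagger + P_-^\dagger$ respects the splitting $L^2(\partial_+SM) = \V_+\oplus\V_-$. Then Proposition \ref{prop:FBP} gives immediately $\frac{1}{2\pi}I_\perp^\sharp P_-^\dagger I_0 f_0 = -f_0$ and $\frac{1}{2\pi}I_0^\sharp P_+^\dagger I_\perp f_s = f_s$, while the ``cross'' terms $\frac{1}{2\pi}I_0^\sharp P_-^\dagger I_0 f_0$ and $\frac{1}{2\pi}I_\perp^\sharp P_+^\dagger I_\perp f_s$ need to be identified: the first is $w_0$ evaluated against the even part, and by the range/adjoint relations between $I_0,I_\perp$ and $P_\pm$ it should vanish or combine appropriately. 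Assembling, $Xu = X_\perp w_0 + (X_\perp w)_0$ should collapse to $-f_0 - X_\perp f_s$, giving (i). For (ii), I would take the fiberwise average of $u$: since $H$ kills the zeroth mode, $u_0 = -i\,w_0 = -i\cdot\frac{1}{2\pi}I_0^\sharp h$, and the computation above that identifies the backprojection of $P_+^\dagger I_\perp f_s$ as $f_s$ (via \eqref{eq:rcIperp}) yields $u_0 = -if_s$; one must also check the backprojection of $P_-^\dagger I_0 f_0$ through $I_0^\sharp$ contributes nothing to the zeroth moment, again by the parity/range structure.

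The main obstacle I anticipate is bookkeeping the normalizing constants and the parity decomposition cleanly: one must be careful that $I(f_0 + X_\perp f_s)$ splits so that $P^\dagger$ acts as $P_-^\dagger$ on the $I_0 f_0$ piece (living in $\V_+$, with range in $\V_{+,0}$) and as $P_+^\dagger$ on the $I_\perp f_s$ piece (living in $\V_-$, with range in $\V_{-,\perp}$), and that the ``diagonal'' backprojections $I_0^\sharp P_-^\dagger I_0$ and $I_\perp^\sharp P_+^\dagger I_\perp$ are the ones that must be shown to vanish (rather than the off-diagonal ones). This requires invoking the precise statements from \cite{Monard2015a} about which angular modes $I_0$ and $I_\perp$ hit — i.e. that $\mathrm{Range}\,I_0 \subseteq \V_+$ and $\mathrm{Range}\,I_\perp\subseteq\V_-$ — so that $H_\pm$ in $P^\dagger = \frac14 P_-^* + \frac14 P_+^*(Id-12C_+^2)$ acts selectively. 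Once the modes are tracked correctly, (i) and (ii) follow purely formally from Proposition \ref{prop:FBP} and the commutator \eqref{eq:commutator}.
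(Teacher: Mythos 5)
Your plan follows essentially the same route as the paper's proof: apply the Pestov commutator \eqref{eq:commutator} to $Xu$ (using $Xw=0$), identify $w_0$ and $(X_\perp w)_0$ with the backprojections $\frac{1}{2\pi}I_0^\sharp$ and $\frac{1}{2\pi}I_\perp^\sharp$, split $P^\dagger I(f_0+X_\perp f_s) = P_-^\dagger I_0 f_0 + P_+^\dagger I_\perp f_s$ according to the $\V_\pm$ decomposition, and invoke Proposition \ref{prop:FBP}, with the ``diagonal'' terms $I_0^\sharp P_-^\dagger I_0 f_0$ and $I_\perp^\sharp P_+^\dagger I_\perp f_s$ killed by the even/odd parity of $\psi$-extensions of elements of $\V_\mp$ --- a point you correctly flag and which the paper leaves implicit. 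The one slip is a sign in the commutator step: $Xu = XHw = [X,H]w = -[H,X]w = -X_\perp w_0 - (X_\perp w)_0$, not $+[H,X]w$, and this is precisely what produces $-f_0 - X_\perp f_s$ under the paper's normalization of $I_\perp^\sharp$.
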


\begin{proof}[Proof of Proposition \ref{prop:special}] Proving claim $(i)$ amounts to computing 
    \begin{align*}
	u_0 &= -i \left( (Id+iH) (P^\dagger I(f_0 + X_\perp f_s))_\psi \right)_0 \\
	&= -i \left( (P^\dagger I(f_0 + X_\perp f_s))_\psi \right)_0 = \frac{-i}{2\pi} I_0^\sharp P^\dagger I(f_0 + X_\perp f_s) = -if_s.
    \end{align*}
    Proving claim $(ii)$ amounts to computing
    \begin{align*}
	Xu &= - X i(Id + iH) (P^\dagger I(f_0 + X_\perp f_s) )_\psi \\
	&= XH (P^\dagger I(f_0 + X_\perp f_s) )_\psi \\
	&= - [H,X] (P^\dagger I(f_0 + X_\perp f_s) )_\psi \\
	&= - \left( X_\perp (P^\dagger I(f_0 + X_\perp f_s)) )_\psi  \right)_0 - X_\perp \left( (P^\dagger I(f_0 + X_\perp f_s) )_\psi \right)_0 \\
	&=  \frac{1}{2\pi} I_\perp^\sharp P^\dagger I(f_0 + X_\perp f_s) -\frac{1}{2\pi} X_\perp I_0^\sharp P^\dagger I(f_0 + X_\perp f_s) \\
	&= - f_0 - X_\perp f_s,
    \end{align*}
    hence the result.
\end{proof}

While these will be used to construct a holomorphization operator of transport solutions in Section \ref{sec:g0gs}, integrating factors for attenuation $a$ will be used at several places throughout. 

\todo[inline]{Show that $w_a\in L^{2,X}(SM)$}

\begin{proposition}\label{prop:HIF}
    For a function $a\in \C^0(M,\Cm)$, the function defined on $SM$ by
    \begin{align}
	w_a = 2\pi i (Id + iH) n_\psi, \qquad n = -\frac{1}{8\pi} P_-^* I_0 a \in \V_-
	\label{eq:wa}
    \end{align}
    is a fiberwise odd, holomorphic solution of $Xw_a = -a$, whose restrictions to $\partial_+ SM$ and $\partial SM$  are given by
    \begin{align}
	\rho_a := w_a|_{\partial_+ SM} = \frac{1}{2} I_0 a - \frac{i}{4} P_-^* I_0 a, \qquad w_a|_{\partial SM} = \frac{1}{2} A_- I_0 a - \frac{i}{4} A_+ P_-^* I_0 a. \label{eq:rhoa}
    \end{align}    
\end{proposition}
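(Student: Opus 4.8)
The plan is to verify the three claimed properties of $w_a$ in turn: (1) that $w_a$ is fiberwise odd and holomorphic, (2) that $Xw_a = -a$, and (3) that the boundary restrictions are as stated. The guiding principle is that $w_a$ is built from the (unattenuated) filtered-backprojection formula of Proposition \ref{prop:FBP} for functions, packaged into a fiberwise holomorphic solution exactly as in Proposition \ref{prop:special}. Indeed, observe that $n = -\frac{1}{8\pi} P_-^* I_0 a = -\frac{1}{2\pi} P_-^\dagger I_0 a$ by Lemma \ref{lem:Pdagger}, so $n_\psi = -\frac{1}{2\pi}(P^\dagger I a)_\psi$ since $P^\dagger$ and $P_-^\dagger$ agree on $\text{Range } I_0 = \V_{+,0}$ (the $P_+^\dagger$ part kills this range). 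Hence $w_a = 2\pi i (Id+iH) n_\psi = -i(Id+iH)(P^\dagger I a)_\psi$, which is precisely the function $u$ from Proposition \ref{prop:special} applied to $f_0 = a$, $f_s = 0$.

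First I would record that $n\in \V_-$ because $I_0 a$ lies in $\V_{+,0}\subset \V_+$ and $P_-^* : \V_+\to \V_-$ by \eqref{eq:Pstardef}; since $\V_-$ is spanned by odd-harmonic data, $n_\psi$ is fiberwise odd, and applying $(Id+iH)$ preserves oddness (as $H$ does) while forcing holomorphicity by construction — this is the content of the first assertion. Second, the identities $Xw_a = -a$ and $(w_a)_0 = 0$ (fiberwise oddness already gives the latter, but it also follows from Proposition \ref{prop:special}(ii) with $f_s = 0$) come directly from Proposition \ref{prop:special}(i)–(ii); the key computational inputs there are the commutator formula \eqref{eq:commutator} and the two filtered-backprojection formulas \eqref{eq:rcI0}–\eqref{eq:rcIperp}, so I would simply cite Proposition \ref{prop:special} rather than redo the calculation.

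The remaining work is the boundary computation \eqref{eq:rhoa}. Restricting to $\partial_+ SM$, we have $n_\psi|_{\partial_+SM} = n$ (since $(\cdot)_\psi$ extends boundary data along geodesics and $\partial_+SM$ is where the data lives), so $\rho_a = w_a|_{\partial_+SM} = 2\pi i (Id+iH)n|_{\partial_+ SM}$, where now $H$ denotes the fiberwise Hilbert transform on $\partial SM$. Writing $n = -\frac{1}{8\pi} P_-^* I_0 a$ and using the explicit form $P_-^* = -A_+^* H_- A_-$ from Lemma \ref{lem:Pdagger}, I would compute $2\pi i n = -\frac{i}{4} P_-^* I_0 a$ and then evaluate $2\pi i\, iH n = -2\pi H n$; the point is to show $-2\pi H n = \frac{1}{2} I_0 a$, i.e. $H P_-^* I_0 a = -4 \cdot \frac{1}{2\pi}\cdot(2\pi)^{-1}\cdots$ — more cleanly, that $-2\pi H\bigl(-\frac{1}{8\pi}P_-^*I_0a\bigr) = \frac14 H P_-^* I_0 a$ equals $\frac12 I_0 a$, which reduces to the identity $H P_-^* I_0 a = 2\, I_0 a$ on $\V_{+,0}$. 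This last identity I would check on the basis elements $u'_{p,q}$ with $q>-\frac12$, $p<q+\frac12$: from \eqref{eq:Pstar}, $P_-^* u'_{p,q} = 2i\, v'_{p,q}$ on this range, and applying the fiberwise Hilbert transform $H$ on $\partial SM$ to $v'_{p,q}$ — which one expands in the $\phi'_{p,q} = e^{i\alpha}\phi_{p,q}$ basis, noting the angular harmonic content — should produce the claimed factor; combined with $I_0 a = P_- P_-^\dagger I_0 a = \frac14 P_- P_-^* I_0 a$ this closes the loop. For the $\partial SM$ restriction, the extension $(\cdot)_\psi$ over the full circle bundle introduces the even/odd scattering extensions $A_\pm$: along a geodesic the holomorphic/antiholomorphic parts of $n_\psi$ extend to the outgoing endpoint via $A_+$ and the $A_-$ pieces appear through $I_0 a = A_-^{-1}(\cdots)$ structure, yielding $w_a|_{\partial SM} = \frac12 A_- I_0 a - \frac i4 A_+ P_-^* I_0 a$. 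The main obstacle is getting the $\partial SM$ formula's $A_\pm$ bookkeeping exactly right — specifically tracking which summand of $(Id+iH)n$ extends via $A_+$ versus which reconstitutes into $A_- I_0 a$ — since on $\partial_+SM$ the distinction is invisible but on all of $\partial SM$ it is the crux; I would handle this by expanding everything in the $\B, \B'$ bases and matching harmonic content mode by mode.
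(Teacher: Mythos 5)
Your strategy is the same as the paper's: identify $w_a = -i(Id+iH)(P^\dagger I_0 a)_\psi$ with the function of Proposition \ref{prop:special} for $(f_0,f_s)=(a,0)$ (using $P^\dagger I_0 a=\tfrac14 P_-^*I_0a$ on $\V_{+,0}$), deduce $Xw_a=-a$ and holomorphicity from there, get fiberwise oddness from $n\in\V_-$, and obtain the boundary traces by letting the explicit boundary operators act on the bases $\B,\B'$. The first two parts are fine as cited.

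The one step that fails as literally written is the restriction: you set $\rho_a=2\pi i(Id+iH)n|_{\partial_+SM}$ and assert that the $A_\pm$ bookkeeping is ``invisible'' on $\partial_+SM$. It is not. The fiberwise Hilbert transform over a boundary basepoint acts on the whole circle fiber, so $H$ does not commute with restriction to $\partial_+SM$; one must first write $n_\psi|_{\partial SM}=A_+n$, apply $(Id+iH)$ on $\partial SM$, and only then restrict via $\tfrac12(A_+^*+A_-^*)$. Expanding $\tfrac12(A_+^*+A_-^*)(Id+iH)A_+$ gives four terms with leftmost factors $A_-^*A_+=0$, $A_+^*A_+=2\,Id$, $A_-^*HA_+=P_-$ (on $\V_-$) and $A_+^*HA_+$: the $\tfrac12 I_0a$ term in \eqref{eq:rhoa} comes precisely from the cross term $P_-$ via $\tfrac14P_-P_-^*=Id$ on $\V_{+,0}$, while $A_+^*HA_+P_-^*u'_{p,q}=(\sgn{2q+1}^2-\sgn{2p-2q-1}^2)v'_{p,q}=0$ kills the remaining one. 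So your target identity ``$HP_-^*I_0a=2I_0a$'' is correct only once $H$ is read as the composite $\tfrac12(A_+^*+A_-^*)HA_+$, and verifying it on the $u'_{p,q}$ amounts to exactly these two computations; with the extension $A_+n$ made explicit, your mode-by-mode plan closes the gap, and the $\partial SM$ formula follows at once by splitting $2\pi i(Id+iH)A_+n$ as $A_+(\tfrac12A_+^*\cdot)+A_-(\tfrac12A_-^*\cdot)$, which yields $-\tfrac i4A_+P_-^*I_0a+\tfrac12A_-I_0a$ with no further work.
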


\begin{proof} That $w_a$ is holomorphic is immediate and $w_a$ solves $Xw_a = -a$ as a consequence of Proposition \ref{prop:special}, and since $n\in \V_-$, $n_\psi$ is odd. We now compute, using that $(n_\psi)|_{\partial SM} = A_+ n$, 
    \begin{align*}
	\rho_a := w_a|_{\partial_+ SM} &= 2\pi i (Id + iH) (n_\psi)|_{\partial SM}|_{\partial_+ SM} \\
	&= -\frac{i}{4} (Id + iH) A_+ P_-^* I_0 a|_{\partial_+ SM} \\
	&= -\frac{i}{8} (A_+^* + A_-^*)(Id + iH) A_+ P_-^* I_0 a.
    \end{align*}
    Symmetry considerations give that $P_-^* I_0 a \in \V_-$. The sum above splits into four terms, whose leftmost factors are $A_-^* A_+ \equiv 0$, $A_+^*A_+ = 2Id$ and $A_-^* H A_+ = P_-$ (when acting on $\V_-$) and $A_+^* H A_+$, leading up to 
    \begin{align*}
	\rho_a = -\frac{i}{4} P_-^* I_0 a + \frac{1}{8} P_- P_-^* I_0 a + \frac{1}{8} A_+^* H A_+ P_-^* I_0 a. 
    \end{align*}
    By Lemma \ref{lem:Pdagger}, we see that the second term in the right-hand side equals $\frac{1}{2} I_0 a$. On to the last term, with the fact that 
    \begin{align*}
	A_+^* H A_+ v'_{p,q} = (-i) (\sgn{2q+1} + \sgn{2p-2q-1})v'_{p,q},
    \end{align*}
    we deduce that 
    \begin{align*}
	A_+^* H A_+ P_-^* u'_{p,q} &= i (\sgn{2q+1} - \sgn{2p-2q-1}) A_+^* H A_+ v'_{p,q} \\
	&= (\sgn{2q+1}^2 - \sgn{2p-2q-1}^2) v'_{p,q} \\
	&= 0,  
    \end{align*}
    hence the formula for $\rho_a$ in \eqref{eq:rhoa} holds. In addition, since $w_a$ is fiberwise odd, then $w_a|_{\partial SM}$ is the fiberwise odd extension of $\rho_a$ to $\partial SM$, and since $I_0 a \in \V_+$ and $P_-^* I_0 a \in \V_-$, this is equivalent to writing
    \begin{align*}
	w_a|_{\partial SM} = \frac{1}{2} A_- I_0 a - \frac{i}{4} A_+ P_-^* I_0 a. 
    \end{align*}
    The proof is complete.
\end{proof}

\subsection{Invariant distributions with prescribed harmonic moments} \label{sec:invariant}

The present section aims at producing fiberwise holomorphic invariant distributions $h_\psi$ (as in \eqref{eq:hpsi}) with fiberwise average $(h_\psi)_0\in L^2(\ker\dbar)$. Since $h\mapsto (h_\psi)_0$ is the adjoint of the ray transform in the $L^2(M)\to L^2(\partial_+ SM, \cos\alpha)$ setting, this can also be formulated as a surjectivity statement for this adjoint, as was initially done in \cite[Theorem 1.4]{Pestov2005} for smooth topologies and simple Riemannian surfaces. The main difference here is that the target space in the ray transform is a different one and, while leading to more explicit constructions (see Theorem \ref{thm:I0star} below), it would not be amenable to the argument in \cite{Pestov2005} since in the present setting, the normal operator $I_0^* I_0$ associated with the restriction $I_0:H_0\to L^2(\partial_+ SM)$ is not an elliptic pseudo-differential operator after being extended to a slightly larger domain. In the present setting, a direct calculation using Santal\'o's formula leads to the expression 
\begin{align}
    I_0^* h = \left( \left( \frac{h}{\cos\alpha} \right)_\psi\right)_0, \qquad I_0^*: L^2(\partial_+ SM) \to H_0.
    \label{eqLI0star}
\end{align}
Here we first aim at finding explicit preimages by $I_0^*$ of elements in $L^2(\ker \dbar)$, and in the case of the Euclidean unit disc, this is again rather explicit, by directly exhibiting the singular value decomposition of $I_0\circ \iota: L^2(\ker \dbar) \to L^2(\partial_+ SM)$, where $\iota:L^2(\ker \dbar)\to H_0$ is the inclusion map. 


\todo[inline]{Explain $I_0$ denotes the transform $I$ restricted to $H_0$ \\ Beurling transform in Astala-Iwaniec p150.}

\paragraph{Singular value decomposition (SVD) of $I_0 \circ \iota$.} We first define
\begin{align}
    Z_k (\x) = \sqrt{\frac{k+1}{2\pi^2}} (x+iy)^k, \qquad k=0,1,\dots , 
    \label{eq:Zk}
\end{align}
normalized so that $\|Z_k\|_{L^2(SM)}=1$. In particular, by construction,
\begin{align*}
    L^2(\ker \dbar) = \ell^2\ \left( \{ Z_k \}_{k=0}^\infty \right).
\end{align*}
In addition, it is computed easily (see e.g., \cite[pp449-450]{Monard2015a}), that 
\begin{align*}
    I_0 Z_k (\beta, \alpha) = \frac{(-1)^k}{\sqrt{2\pi^2} \sqrt{k+1}} e^{ik\beta} (e^{i(2k+1)\alpha} + (-1)^k e^{-i\alpha}) = \frac{(-1)^k}{\sqrt{k+1}}\ u'_{k,k},
\end{align*}
so that 
\begin{align*}
    I_0 (L^2(\ker \dbar)) = I_0 \left( \ell^2 \left( \left\{ Z_k \right\}_{k=0}^\infty \right)  \right) = h^{\frac{1}{2}} \left( \left\{ u'_{k,k} \right\}_{k=0}^\infty \right).
\end{align*}
Moreover, since $\dprod{u'_{k,k}}{u'_{n,n}}_{\partial_+ SM} = 2 \delta_{kn}$, we obtain directly that the SVD of $I_0\circ\iota: L^2(\ker \dbar) \to \ell^2 \left( \left\{ u'_{k,k} \right\}_{k=0}^\infty \right)$, is given by 
\begin{align*}
    \left(Z_k,\ \frac{(-1)^k}{\sqrt{2}} u'_{k,k},\ \sqrt{\frac{2}{k+1}} \right), \qquad k=0,1,\dots
\end{align*}
This implies in particular that the SVD of $(I_0\circ\iota)^* = \iota^*\ I_0^*:  \ell^2 \left( \left\{ u'_{k,k} \right\}_{k=0}^\infty \right) \to L^2(\ker \dbar)$ is nothing but 
\begin{align*}
    \left( \frac{(-1)^k}{\sqrt{2}} u'_{k,k},\ Z_k,\ \sqrt{\frac{2}{k+1}}  \right), \qquad k=0,1,\dots,
\end{align*}
or, in other words
\begin{align*}
    \iota^* I_0^* \left( \frac{(-1)^k}{\sqrt{2}} u'_{k,k} \right) =  \sqrt{\frac{2}{k+1}} \ Z_k, \qquad \Leftrightarrow \qquad \iota^* I_0^* \left( (-1)^k \frac{\sqrt{k+1}}{2} u'_{k,k} \right) = Z_k,
\end{align*}
where $\iota^*$ is the $L^2(SM)$-orthogonal projection onto $L^2(\ker \dbar)$. For the statement above to express the existence of invariant distributions with prescribed average, it is now absolutely necessary to remove $\iota^*$ from the equalities above, and make it a surjectivity result for $I_0^*$ and not $\iota^* I_0^\star$. To this end, we must go through the following direct calculation, whose proof is relegated to the Appendix.  
\begin{proposition}\label{prop:Wk} For any integer $k\ge 0$, we have 
    \begin{align}
	(-1)^k \frac{\sqrt{2}}{\pi} z^k = \left( \left( \frac{u'_{k,k}}{\cos \alpha} \right)_\psi\right)_0 = 2\left( \left( u_{k,k} \right)_\psi\right)_0.
	\label{eq:backproj}
    \end{align}
\end{proposition}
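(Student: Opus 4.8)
The plan is to verify the identity \eqref{eq:backproj} by a direct computation, exploiting the fact that for the Euclidean disc the invariant-distribution extension $(\cdot)_\psi$ and the fiberwise averaging $(\cdot)_0$ are both completely explicit on the Fourier basis of $L^2(\partial_+ SM)$. First I would recall that $u'_{k,k} = \phi'_{k,k} + (-1)^k \phi'_{k,-1} = (Id+\SS_A^*)\phi'_{k,k}$, with $\phi'_{k,q}(\beta,\alpha) = \frac{1}{\pi\sqrt 2} e^{i\alpha} e^{i(k\beta + 2q\alpha)}$; and that $u_{k,k} = (Id+\SS_A^*)\phi_{k,k}$ with $\phi_{k,k}(\beta,\alpha) = \frac{1}{\pi\sqrt2}e^{i(k\beta+2k\alpha)}$. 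Since $\frac{1}{\cos\alpha} e^{i\alpha}(e^{i(2k+1)\alpha} + (-1)^k e^{-i\alpha})$ can be rewritten so that the pieces are $e^{i(k\beta+2k\alpha)}$ and $(-1)^k e^{i(k\beta-2\alpha)}$ divided by $\cos\alpha$, the second equality in \eqref{eq:backproj} reduces to observing that $\left(\frac{u'_{k,k}}{\cos\alpha}\right) = 2u_{k,k}$ on $\partial_+ SM$ — i.e. multiplying $u'_{k,k}$ by $e^{-i\alpha}/\cos\alpha$ and simplifying — modulo being careful that $\frac{1}{\cos\alpha}$ times the antipodally-reflected term lands on the correct Fourier mode. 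This is a purely trigonometric manipulation using $\SS_A(\beta,\alpha) = (\beta+\pi+2\alpha, -\alpha)$ and the fact that $\cos(-\alpha)=\cos\alpha$.

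The main content is then the first equality: showing that $\left((u_{k,k})_\psi\right)_0 = (-1)^k \frac{1}{\sqrt2 \pi} z^k$ (so that after the factor $2$ one gets $(-1)^k\frac{\sqrt2}{\pi}z^k$). Here I would use the explicit description of the geodesic extension: for $h = \sum_{p,q} c_{p,q}\phi_{p,q} \in L^2(\partial_+ SM)$, the invariant distribution $h_\psi$ on $SM$ is characterized by $X h_\psi = 0$ and $h_\psi|_{\partial_+ SM} = h$, and pulling back along the geodesic flow, $h_\psi(\x,\theta)$ is obtained by following the line backward to the entry point $(\beta(\x,\theta),\alpha(\x,\theta))\in\partial_+ SM$ and evaluating $h$ there. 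For $\phi_{p,q}$ this entry-point map has a completely explicit form in terms of $z = x+iy$ and $e^{i\theta}$ (it is precisely the computation of $I_0 Z_k$ being run "in reverse", as already referenced from \cite[pp449--450]{Monard2015a}). Concretely, I expect that $(\phi_{k,k})_\psi$ and $(\SS_A^*\phi_{k,k})_\psi$ each expand as a fiberwise Fourier series whose zeroth mode I can read off, and the antipodal symmetrization is what makes the zeroth mode equal to a nonzero multiple of $z^k$ rather than something that cancels. Santal\'o's formula \eqref{eq:santalo} together with the already-computed $I_0 Z_k = \frac{(-1)^k}{\sqrt{k+1}} u'_{k,k}$ gives a shortcut: the pairing $\dprod{I_0 Z_k}{h \cos\alpha}_{\partial_+ SM} = \dprod{Z_k}{I_0^* h}_M$ lets me extract $\left(\left(\frac{u'_{k,k}}{\cos\alpha}\right)_\psi\right)_0$ tested against every $Z_n$, pinning down its anti-analytic-orthogonal-complement projection; but to get the full function (not just $\iota^* I_0^*$) I must instead do the honest pointwise computation of the extension, which is the whole point of this proposition as emphasized in the surrounding text.

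So the concrete steps, in order, are: (1) write $u_{k,k}$ and $u'_{k,k}$ explicitly in $(\beta,\alpha)$ and check the elementary identity $\frac{u'_{k,k}}{\cos\alpha} = 2 u_{k,k}$ on $\partial_+ SM$, which disposes of the second equality in \eqref{eq:backproj}; (2) recall/derive the explicit entry-point parametrization: a point $(\x,\theta)\in SM$ with $\x = (x,y)$ lies on the geodesic entering at $(\beta,\alpha)$ where $e^{i\beta}$ and $\alpha$ are given by intersecting the line $\{\x + tv\}$ with $\partial M$; express $e^{ik\beta}$ and $e^{\pm 2ik\alpha}$ at that entry point as functions of $z$ and $e^{i\theta}$; (3) substitute into $u_{k,k}(\beta,\alpha)$ to obtain $(u_{k,k})_\psi(\x,\theta)$ as an explicit function on $SM$, expand in $e^{i n\theta}$, and extract the $n=0$ term; (4) simplify using $z\bar z \le 1$ inside $M$ and the geometric-series structure that appears, to land on $(-1)^k\frac{1}{\sqrt2\pi}z^k$, then multiply by $2$. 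The main obstacle I anticipate is step (3): correctly handling the two branches coming from the two terms in $u_{k,k} = \phi_{k,k} + (-1)^k \SS_A^*\phi_{k,k}$ — equivalently, the two ends of each chord — so that the fiberwise-zeroth modes add rather than cancel, and tracking the various $(-1)^k$ and $\sqrt2$ normalizations without error. This is exactly the kind of bookkeeping the paper defers to the Appendix, so I would organize it by first treating $k=0$ as a sanity check ($u_{0,0} = \frac{1}{\pi\sqrt2}(1 + 1) = \frac{\sqrt2}{\pi}$ is constant, its $\psi$-extension is the same constant, its zeroth mode is $\frac{\sqrt2}{\pi} = 2\cdot\frac{1}{\sqrt2\pi}$, consistent with $(-1)^0\frac{\sqrt2}{\pi}z^0$), then doing the general $k$ by the explicit substitution.
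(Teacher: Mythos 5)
Your overall strategy for the outer equality (explicit entry-point parametrization, substitution into $u_{k,k}$, extraction of the fiberwise zeroth mode) is the same as the paper's, and would work. The genuine gap is in your step (1): the pointwise identity $\frac{u'_{k,k}}{\cos\alpha} = 2u_{k,k}$ on $\partial_+ SM$ is \emph{false} for $k\ge 2$. Writing $w = e^{2i\alpha}$, one has
\begin{align*}
\frac{u'_{k,k}}{\cos\alpha} = \frac{\sqrt2}{\pi} e^{ik\beta}\,\frac{w^{k+1}+(-1)^k}{w+1} = \frac{\sqrt2}{\pi} e^{ik\beta} (-1)^k \sum_{p=0}^{k} (-1)^p e^{2ip\alpha},
\end{align*}
a sum of $k+1$ Fourier modes in $\alpha$, whereas $2u_{k,k} = \frac{\sqrt2}{\pi}e^{ik\beta}\bigl((-1)^k + e^{2ik\alpha}\bigr)$ retains only the extreme modes $p=0$ and $p=k$. (The identity happens to hold for $k=0,1$, so your $k=0$ sanity check would not catch this.) The second equality in \eqref{eq:backproj} is therefore \emph{not} a trigonometric manipulation: it holds only after applying $(\cdot)_\psi$ and averaging over the fiber, and it requires the nontrivial fact that the intermediate modes die, i.e.\ that
\begin{align*}
J_{k,p}(\x) := \frac{1}{2\pi}\int_{\Sm^1} \bigl(e^{ik\beta}e^{2ip\alpha}\bigr)_\psi(\x,\theta)\,d\theta = 0 \qquad \text{for } 0<p<k.
\end{align*}
This vanishing is the same kind of computation as your step (3), but you must carry it out for \emph{all} $0\le p\le k$, not just for the two modes appearing in $u_{k,k}$: after substituting the entry-point map one finds that $e^{2ip\alpha_-}$ contributes harmonics only in the range $\{e^{im\theta}: |m|\le k-2p\}$ (for $k-2p\ge 0$; the case $k-2p<0$ follows by the substitution $\theta\mapsto\theta+\pi$, which gives $(-1)^pJ_{k,p}=(-1)^{k-p}J_{k,k-p}$), so the pairing against $e^{ik\theta}$ vanishes unless $p\in\{0,k\}$. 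Without this step you would prove only that the first and third expressions in \eqref{eq:backproj} agree, leaving the middle term — which is the one actually used to define $W_k$ and the kernel $\G$ — unproven. One further small correction: no geometric-series structure or use of $|z|\le 1$ appears in the fiberwise extraction; the zeroth (respectively $e^{ik\theta}$) coefficient comes from a binomial expansion of $\bigl(\sqrt{1-\rho^2\sin^2\theta} + i\rho\sin\theta\bigr)^{k-2p}$, in which only the pure power $(i\rho\sin\theta)^k$ survives the Fourier projection.
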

Now defining 
\begin{align}
    W_k := (-1)^k \frac{\sqrt{k+1}}{2}\ u'_{k,k} = (-1)^k \frac{\sqrt{k+1}}{2\pi\sqrt{2}} e^{ik\beta} (e^{i(2k+1)\alpha} + (-1)^k e^{-i\alpha}), \quad k=0,1,\dots,
    \label{eq:Wk}
\end{align}
and with $Z_k$ as in \eqref{eq:Zk}, Proposition \ref{prop:Wk} indeed implies
\begin{align*}
    I_0^* W_k = \left(\left(\frac{W_k}{\cos \alpha}\right)_\psi\right)_0 = Z_k, \qquad k=0,1,\dots
\end{align*}
This motivates the definition of the kernel $\G$, based on the series $\sum_{k=0}^\infty (k+1)\zeta^k = (1-\zeta)^{-2}$, convergent for $|\zeta|<1$: 
\begin{align}
    \begin{split}
	\G(z; \beta,\alpha) &:= \sum_{k=0}^\infty \overline{W_k}(\beta,\alpha) Z_k(z) \\
	&= \frac{1}{4\pi^2} \sum_{k=0}^\infty (k+1) (-ze^{-i\beta})^k (e^{-i(2k+1)\alpha} + (-1)^k e^{i\alpha}) \\
	&= \frac{1}{4\pi^2} \left( \frac{e^{-i\alpha}}{(1+ze^{-i(\beta+2\alpha)})^2} + \frac{e^{i\alpha}}{(1-z e^{-i\beta})^2} \right).
    \end{split}    
    \label{eq:Green}
\end{align}
Based on the property that $I_0^* W_k = Z_k$, we are able to fomulate the following
\begin{theorem}\label{thm:I0star}
    The operator $I_0^*:h^{-\frac{1}{2}} \left( \{u'_{k,k}\}_{k=0}^\infty \right)\to L^2(\ker \dbar)$ is surjective. More specifically, for any $f\in L^2(\ker \dbar)$, given by $f = \sum_{k=0}^\infty \dprod{f}{Z_k}_{SM} Z_k$, the function $W_f\in h^{-\frac{1}{2}} \left( \{u'_{k,k}\}_{k=0}^\infty \right)$ given by  
    \begin{align}
	W_f := \sum_{k=0}^\infty \dprod{f}{Z_k}_{SM} W_k, \quad \text{satisfies} \qquad I_0^* W_f = \left( \left( \frac{W_f}{\cos\alpha} \right)_\psi \right)_0 = f. 
	\label{eq:Wf}
    \end{align}
    Moreover, the distribution $\left( \frac{W_f}{\cos\alpha} \right)_\psi \in h^{-\frac{1}{2}} \left( \left( \frac{u'_{k,k}}{\cos\alpha} \right)_\psi, k=0\dots\infty \right)$ 
    \begin{itemize}
	\item[$(i)$] is fiberwise holomorphic, 
	\item[$(ii)$] satisfies $\left\langle \left( \frac{W_f}{\cos\alpha} \right)_\psi, e^{im\theta}Z_k \right\rangle_{SM} = 0$ for every $k\ge 1$ and $m>0$. 
    \end{itemize}
\end{theorem}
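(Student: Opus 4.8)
The plan is to verify the two claimed properties of the invariant distribution $\left(\frac{W_f}{\cos\alpha}\right)_\psi$ directly from the explicit series expansion, after first dispatching the surjectivity statement, which is essentially immediate from the SVD already established. Indeed, for $f=\sum_k \langle f,Z_k\rangle_{SM} Z_k \in L^2(\ker\dbar)$, linearity of $I_0^*$ together with the identity $I_0^* W_k = Z_k$ from Proposition \ref{prop:Wk} (reinterpreted via \eqref{eqLI0star}) gives $I_0^* W_f = f$; one should note that $W_f$ lies in the weighted space $h^{-\frac12}(\{u'_{k,k}\})$ precisely because $\|W_k\|^2$ grows like $k+1$ while $\sum_k |\langle f,Z_k\rangle_{SM}|^2 < \infty$, so the pairing $\langle W_f,\cdot\rangle$ makes sense against elements of $h^{\frac12}(\{u'_{k,k}\})$. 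The map $h\mapsto (h/\cos\alpha)_\psi$ must be interpreted distributionally on this weighted scale, which I would record as a remark: since $((u'_{k,k})/\cos\alpha)_\psi$ is a genuine (smooth, indeed fiberwise-polynomial-times-bounded) function with controlled growth in $k$, the series $\sum_k \langle f,Z_k\rangle_{SM}\, ((u'_{k,k})/\cos\alpha)_\psi$ converges in the stated weighted-$\ell^2$ sense and defines the distribution $\left(\frac{W_f}{\cos\alpha}\right)_\psi$, whose fiberwise average is $f$ by termwise application of Proposition \ref{prop:Wk}.

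For property $(i)$, fiberwise holomorphicity, I would argue that each building block $\left(\frac{u'_{k,k}}{\cos\alpha}\right)_\psi$ is fiberwise holomorphic and that this property is preserved under the (weighted) limit. To see the holomorphicity of the building blocks, recall $u'_{k,k} = (Id+\SS_A^*)\phi'_{k,k}$ and that $\phi'_{k,k}(\beta,\alpha) = \frac{1}{\pi\sqrt2} e^{i\alpha} e^{i(k\beta + 2k\alpha)}$; one then computes $\left(\frac{\phi'_{k,k}}{\cos\alpha}\right)_\psi$ and its antipodal counterpart and observes — this is the computational heart of the argument, and is exactly the kind of thing already carried out in \cite{Monard2015a} and in the Appendix computation behind Proposition \ref{prop:Wk} — that only nonnegative circular harmonics $e^{im\theta}$ with $m\ge 0$ appear. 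Since $(Id+iH)$ annihilates all such modes, each block is fiberwise holomorphic, hence so is any limit of linear combinations in the weighted topology (the harmonic projections are continuous on the relevant scale). Equivalently, and perhaps more cleanly, one can note that $\G(z;\beta,\alpha) = \sum_k \overline{W_k}(\beta,\alpha) Z_k(z)$ with the closed form given in \eqref{eq:Green}, and the reconstruction formulas $g_{k,\pm}$ of Theorem \ref{thm:reconstruction} will reuse exactly this object; I would phrase the holomorphicity as a direct consequence of the fact that, writing $z = r e^{i\phi}$ along a geodesic, $\left(\frac{W_f}{\cos\alpha}\right)_\psi(\x,\theta)$ has, as a function of $\theta$, only Fourier modes $e^{im\theta}$, $m\ge 0$.

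For property $(ii)$, I would compute $\left\langle \left(\frac{W_f}{\cos\alpha}\right)_\psi,\, e^{im\theta} Z_k\right\rangle_{SM}$ for $m>0$, $k\ge 1$. By linearity and the weighted convergence it suffices to handle $\left\langle \left(\frac{u'_{n,n}}{\cos\alpha}\right)_\psi,\, e^{im\theta} Z_k\right\rangle_{SM}$ and show it vanishes for $m>0$, $k\ge 1$. The natural tool is the integration-by-parts / transport identity: $\left(\frac{W_f}{\cos\alpha}\right)_\psi$ solves $X(\cdot)=0$, and pairing against $e^{im\theta}Z_k$ one can use Santaló's formula \eqref{eq:santalo} to reduce the $SM$-integral to a boundary integral, or — more straightforwardly — expand $\left(\frac{u'_{n,n}}{\cos\alpha}\right)_\psi$ into its explicit circular-harmonic series in $\theta$ (with spatial coefficients that are polynomials in $z$ and $\zbar$, as produced by the $\psi$-extension of $e^{ip\beta}$ times powers of $e^{i\alpha}$) and integrate against $e^{im\theta}Z_k = \text{const}\cdot e^{im\theta} z^k$ term by term; the angular integral forces matching of the harmonic index, and then the remaining spatial integral $\int_M (\text{poly in } z,\zbar)\cdot \overline{z^k}\, d\x$ over the disc vanishes by orthogonality of $\{z^j\zbar^\ell\}$ unless the polynomial has a $z^k$-component, which, from the structure of the $\psi$-extension, it does not once $m>0$ and $k\ge1$. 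I expect property $(ii)$ to be the main obstacle, because it requires making the $\psi$-extension of $u'_{n,n}/\cos\alpha$ sufficiently explicit to read off which $(\theta\text{-harmonic}, z^k)$ pairs can be nonzero; the cleanest route is probably to recognize that $\left(\frac{u'_{n,n}}{\cos\alpha}\right)_\psi$ is, up to normalization, the fiberwise-holomorphic invariant distribution whose $0$-th harmonic is $z^n$ and whose higher harmonics are obtained by applying $\eta_+$ repeatedly (so the $m$-th harmonic is $\eta_+^m$ of something), and then the pairing with $e^{im\theta}Z_k$ becomes $\langle \eta_+^m(\cdots), \text{const}\cdot z^k\rangle_M$-type, which by the adjoint relation $\eta_+^* = -\eta_-$ and $\dbar z^k = 0$ collapses to zero. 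I would write this last identity carefully, as it is both the crux and the place where sign/normalization bookkeeping is easiest to get wrong.
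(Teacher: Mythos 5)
Your treatment of the surjectivity statement and the overall architecture (reduce to the single family $\left(\frac{u'_{k,k}}{\cos\alpha}\right)_\psi$, then pass to the limit in the weighted topology) matches the paper. However, both $(i)$ and $(ii)$ rest on steps that are either unproven or would fail as written. For $(i)$, the computation you defer to does not exist where you point: the Appendix behind Proposition \ref{prop:Wk} only evaluates the \emph{fiberwise average} $J_{k,p}$ of $(e^{ik\beta}e^{2ip\alpha})_\psi$, and says nothing about the negative $\theta$-harmonics of the invariant extension in the interior. What is easy is that the boundary trace $A_+\left(\frac{u'_{k,k}}{\cos\alpha}\right)$ is a finite sum of nonnegative harmonics in $\alpha$ (Eq.~\eqref{eq:firstcal}); but holomorphicity of the boundary data does \emph{not} by itself transfer to the invariant extension, since the transport of a single harmonic mixes all harmonics. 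The paper bridges exactly this gap with Lemma \ref{lem:holo}: setting $v=(Id-iH)h_\psi$, the commutator identity \eqref{eq:commutator} gives $Xv = iX_\perp (h_\psi)_0$ (the term $(X_\perp h_\psi)_0$ drops by parity since $h\in\V_+$), and the extra hypothesis $(h_\psi)_0\in\ker\dbar$ turns this into $X\bigl(v-(h_\psi)_0\bigr)=0$ with vanishing boundary data, forcing $v=(h_\psi)_0$. This argument needs \emph{both} inputs — boundary holomorphicity and $(h_\psi)_0\in\ker\dbar$ — and you have supplied neither the lemma nor a substitute computation.

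For $(ii)$, your ``cleanest route'' is flawed on two counts. First, the higher harmonics of a fiberwise holomorphic invariant distribution are not obtained by applying $\eta_+$ repeatedly to the zeroth one: the transport equation only imposes $\eta_- w_{m+1} = -\eta_+ w_{m-1}$, which determines $w_{m+1}$ only modulo $\ker\eta_-$. Second, the adjoint relation $\eta_+^*=-\eta_-$ holds only up to boundary terms on $\partial M$, and the spatial coefficients in play (e.g.\ $z^n$) do not vanish there, so the claimed collapse of $\dprod{\eta_+^m(\cdots)}{z^k}_{M}$ to zero is unjustified. The route you mention only in passing is the one that closes: for $m$ odd the pairing vanishes by parity, since $e^{im\theta}Z_k$ is fiberwise odd while $\left(\frac{W_p}{\cos\alpha}\right)_\psi$ is fiberwise even (as $\frac{W_p}{\cos\alpha}\in\V_+$); for $m=2q$ even, the unattenuated integration by parts \eqref{eq:IBPatt} reduces the pairing to $\dprod{I[e^{i2q\theta}Z_k]}{W_p}_{\partial_+ SM}$, and the explicit SVD of \cite[Prop.~4]{Monard2015a} identifies $I[e^{i2q\theta}Z_k]$ as a multiple of $u'_{2q+k,\,q+k}$, which is orthogonal to $u'_{p,p}$ for all $q\ge1$, $k\ge1$. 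You should execute this version rather than the direct expansion of the $\psi$-extension.
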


\todo[inline]{Study the harmonic content of $(u'_{p,q}/\cos\alpha)_\psi$. Show that multiplication by a $C^0$ does not alter the distribution. }


\begin{remark} Using \eqref{eq:Green}, $W_f$ defined in \eqref{eq:Wf} also takes the integral representation
    \begin{align*}
	W_f(\beta,\alpha) = 2\pi \int_{M} f(\x)\ \overline{\G (x+iy; \beta,\alpha)}\ d^2\x, \qquad (\beta,\alpha)\in \partial_+ SM,
    \end{align*}
    where the $2\pi$ factor comes from the fact that the initial integral is over $SM$. 
\end{remark}

\begin{remark} As stated in Theorem \ref{thm:I0star}, $\left(\frac{W_f}{\cos\alpha}\right)_\psi$ makes sense as an element in the $h^{-\frac{1}{2}}$ span of $\left( \frac{u'_{k,k}}{\cos\alpha} \right)_\psi$, an orthogonal family of norm $2$ in $L^2(SM)$, as is readily seen by
    \begin{align*}
	\left\langle \left( \frac{u'_{k,k}}{\cos\alpha} \right)_\psi, \left( \frac{u'_{\ell,\ell}}{\cos\alpha} \right)_\psi\right\rangle_{SM} &= \int_{\partial_+ SM} \int_0^{2\cos\alpha} \left( \frac{u'_{k,k}}{\cos\alpha} \frac{u'_{\ell,\ell}}{\cos\alpha} \right)_\psi (\phi_t(\beta,\alpha))\ dt\ \cos\alpha\ d\alpha\ d\beta \\
	&= 2 \left\langle u'_{k,k}, u'_{\ell,\ell} \right\rangle_{\partial_+ SM} = 4\delta_{k,l}. 
    \end{align*}
\end{remark}


\begin{proof}[Proof of Theorem \ref{thm:I0star}] 
    The first statement is a direct consequence of Proposition \ref{prop:Wk}, and we now prove $(i)$ and $(ii)$. \\
    \noindent {\bf Proof of $(i)$.} In light of Lemma \ref{lem:holo} below, it is enough to show that $A_+ \left( \frac{W_k}{\cos\alpha} \right)$ is holomorphic on the fibers of $\partial SM$, which means that its harmonic content in $\alpha$ only consists of nonnegative harmonics. In order to check, it should be noted that $\frac{W_k}{\cos\alpha}$, initially defined on $\partial_+ SM$, belongs to $\V_+$, which means that extending it to $\partial_- SM$ by evenness is the same as extending by evenness w.r.t. $\alpha\mapsto \alpha+\pi$, and judging by the given expression, this simply consists of extending the expression we already have to $\partial SM$. Then the calculation \eqref{eq:firstcal} applies to $A_+ \left(\frac{u'_{k,k}}{\cos\alpha}\right)$ on the whole of $\partial SM$, i.e. 
    \begin{align*}
	A_+ \left(\frac{u'_{k,k}}{\cos\alpha}\right) = \frac{\sqrt{2}}{\pi} e^{ik\beta} (-1)^k \sum_{p=0}^k (-1)^p e^{2ip\alpha}, \qquad (\beta,\alpha) \in \partial SM, 
    \end{align*}
    hence $A_+\left(\frac{W_k}{\cos\alpha}\right) = (-1)^k \frac{\sqrt{k+1}}{2} A_+\left(\frac{u'_{k,k}}{\cos\alpha}\right)$ only has nonnegative Fourier modes in $\alpha$. Since in addition $\left(\left( \frac{W_f}{\cos\alpha} \right)_\psi \right)_0= f\in L^2(\ker \dbar)$, by virtue of Lemma \ref{lem:holo}, $\left( \frac{W_f}{\cos\alpha} \right)_\psi$ is fiber-holomorphic on $SM$. \\
    {\bf Proof of $(ii)$.} It suffices to show that for every $p\ge 0$, 
    \begin{align*}
	\left\langle \left( \frac{W_p}{\cos\alpha} \right)_\psi, e^{im\theta}Z_k \right\rangle_{SM} = 0, \qquad k\ge 0, \quad m>0.
    \end{align*}
    For $m$ odd, this is already clear because then $e^{im\theta} Z_k$ is fiberwise odd, while, since $\frac{W_k}{\cos\alpha}\in \V_+$, $\left( \frac{W_k}{\cos\alpha} \right)_\psi$ is fiberwise even. Therefore it remains to check orthogonality for $m = 2q$ even. In this case, the integration by parts formula \eqref{eq:IBPatt} with $a=0$ reads
    \begin{align*}
	\dprod{e^{i2q\theta} Z_k}{\left( W_p/\cos\alpha \right)_\psi}_{L^2(SM)} = \dprod{I[e^{i2q\theta} Z_k]}{W_p}_{L^2(\partial_+ SM)}.
    \end{align*}
    Now using \cite[Proposition 4]{Monard2015a}, $I[e^{i2q\theta} Z_k]$ is proportional to $u'_{2q+k,q+k}$, and since $W_p$ is a multiple of $u'_{p,p}$, their inner product always vanishes. $(ii)$ is proved. 
\end{proof}

\begin{lemma}\label{lem:holo} Suppose $h\in \V_+$ is such that $A_+ h$ is fiberwise holomorphic on $\partial SM$ and $(h_\psi)_0 \in \ker \dbar$. Then $h_\psi$ is fiber-holomorphic on $SM$.     
\end{lemma}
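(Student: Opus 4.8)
The plan is to transfer the fiberwise-holomorphic property from the boundary $\partial SM$ into the interior $SM$ by propagating it along the geodesic flow, using that $h_\psi$ is flow-invariant ($X h_\psi = 0$) together with the given interior constraint on the fiberwise average $(h_\psi)_0 \in \ker\dbar$. Write $v := h_\psi$, so $Xv = 0$ on $SM$ and $v|_{\partial_+ SM} = h$. The goal is $(Id + iH)v = 0$ on $SM$. First I would reduce to a statement about the positive Fourier modes: decompose $v = v_0 + \sum_{k\ge 1}(v_{k}e^{ik\theta} + v_{-k}e^{-ik\theta})$; fiberwise holomorphicity is equivalent to $v_{-k} = 0$ for all $k\ge 1$. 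The hypothesis $A_+h$ fiberwise holomorphic on $\partial SM$ says that the boundary trace of $v$ has no negative fiber-harmonics on $\partial_+SM$ (after even extension via $\mathbb{S}$), and the hypothesis $(v)_0 = v_0 \in \ker\dbar$ pins down the zeroth mode in the interior.

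The key mechanism is the structure of the transport equation $Xv = 0$ written in Fourier modes. Recalling $X = \eta_+ + \eta_-$ with $\eta_\pm$ lowering/raising the fiber degree by one, the equation $Xv=0$ couples consecutive modes: $\eta_- v_{-(k-1)} + \eta_+ v_{-(k+1)} = 0$ for $k\ge 1$ on the negative side, and similarly on the positive side, with the zeroth mode satisfied by $\eta_+ v_{-1} + \eta_- v_{1} = 0$. I would argue by descending induction (or equivalently by a maximum-principle / energy argument along the flow): the negative modes $w := \sum_{k\ge 1} v_{-k}e^{-ik\theta}$ satisfy a transport equation $Xw = -\eta_+ v_{-1}$ that is driven only by $v_{-1}$, and $v_{-1}$ in turn is controlled by the interior condition on $v_0$ (since $\eta_+ v_{-1} = -\eta_- v_1$ and the operator $\eta_+$ is injective on $H^1_0$-type spaces via the Poincaré inequality $\|u\|^2 \le 4C_P\|\eta_+u\|^2$ recorded earlier). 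The cleanest route is probably: apply $(Id + iH)$ to the equation $Xv = 0$ and use the Pestov commutator $[H,X]u = X_\perp u_0 + (X_\perp u)_0$ from \eqref{eq:commutator}. This yields $X[(Id+iH)v] = i[H,X]v = i(X_\perp v_0 + (X_\perp v)_0)$; since $v_0 \in \ker\dbar$ one checks $X_\perp v_0 + (X_\perp v)_0$ is purely of fiber-degrees $\pm1$ and in fact its holomorphic/antiholomorphic parts organize so that $(Id+iH)v$ solves a closed transport equation with zero (or holomorphic) right-hand side, hence is determined by its boundary value $(Id+iH)h|_{\partial_+SM} = (Id+iH)(A_+h)|_{\partial_+SM} = 0$.

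The main obstacle I anticipate is making the ``$(Id+iH)v$ solves a closed transport equation'' step rigorous at the level of distributions, since $v = h_\psi$ is only a distribution on $SM$ (it blows up like $1/\cos\alpha$ near the glancing region), so one cannot naively integrate by parts or restrict to boundary fibers. The way around this is to work mode-by-mode with the explicit pairings against smooth test functions $e^{im\theta}Z_k$ (as is done in the proof of Theorem \ref{thm:I0star}$(ii)$ via the integration-by-parts formula \eqref{eq:IBPatt} with $a=0$), reducing the claim $v_{-k}=0$ to the vanishing of finitely many inner products $\langle v, e^{im\theta}(\text{antiholomorphic monomial})\rangle_{SM}$, each of which equals, after integration by parts, a pairing of $h$ against a ray-transform of an antiholomorphic function, which lands in $\langle u'_{p,q}\rangle$-modes orthogonal to the holomorphic content of $A_+h$. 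In other words, I would deduce the interior holomorphicity from the boundary holomorphicity of $A_+h$ by testing against the complete orthogonal system and matching Fourier supports in data space, exactly mirroring the computation already carried out for $W_f$ in Theorem \ref{thm:I0star}.
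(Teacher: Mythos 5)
Your main route is essentially the paper's proof: apply the (anti)holomorphic projection to $h_\psi$, use the Pestov commutator \eqref{eq:commutator} (with $(X_\perp h_\psi)_0=0$ by the evenness coming from $h\in\V_+$, and $\dbar (h_\psi)_0=0$ converting $iX_\perp (h_\psi)_0$ into $X(h_\psi)_0$), and conclude by uniqueness for the transport equation with the boundary value supplied by the holomorphicity of $A_+h$. The only point to tighten is that the flow-invariant quantity is not the projection itself but the projection minus $(h_\psi)_0$ (which is why the conclusion is $(Id-iH)h_\psi=(h_\psi)_0$ rather than $0$); the mode-by-mode testing you offer as a fallback is not needed and is not what the paper does.
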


\begin{proof}[Proof of Lemma \ref{lem:holo}]
    We must show that $v := (Id - iH) h_\psi = (h_\psi)_0$. The function $v$ satisfies 
    \begin{align*}
	X v = X(Id -iH) h_\psi = i[H,X] h_\psi = i X_\perp (h_\psi)_0 + \cancel{i (X_\perp h_\psi)_0},
    \end{align*}
    where the last term vanishes because $h\in \V_+$ (thus $h_\psi$ is fiberwise even, thus $X_\perp h_\psi$ is fiberwise odd). Since $(h_\psi)_0\in \ker \dbar$ and since $e^{-i\theta} \dbar = \frac{1}{2} (X- iX_\perp)$, then $i X_\perp (h_\psi)_0 = X(h_\psi)_0$, so the equation above can be rewritten as
    \begin{align}
	X( v - (h_\psi)_0) = 0.
	\label{eq:transvv}
    \end{align}
    At the boundary, 
    \[ (h_\psi)_0|_{\partial SM} = (A_+ h)_0 = (Id-iH) A_+ h = v|_{\partial SM}, \]
    so $(v - (h_\psi)_0)|_{\partial SM} = 0$, then by \eqref{eq:transvv}, $v = (h_\psi)_0$ on $SM$. Lemma \ref{lem:holo} is proved. 
\end{proof}

\todo[inline]{Formulated as it is, Lemma \ref{lem:holo} holds on any orientable Riemannian surface with boundary $(M,g)$, with $\{X,X_\perp,V\}$ the canonical framing of its unit tangent bundle $SM$. }

\subsection{Reconstruction of $g_m\in \hd_m$} \label{sec:gm}

We now explain how to reconstruct $g_m\in \hd_m$ from knowledge of $I_a(g'+g_m)$, where, if $m\ge 2$, $g'\in L^2_{(m-1)}(SM)$ and if $m=1$, $g' = g_0 + X_\perp g_s$ where $g_0\in L^2(M)$ and $g_s\in H^1_0(M)$. Recall that $g_m$ can be written as
\[ g_{m}(\x,\theta) = e^{im\theta} g_{m,+}(\x)  + e^{-im\theta} g_{m,-}(\x),\quad g_{m,+}\in L^2(\ker\dbar),\quad g_{m,-} \in L^2(\ker \partial). \]

\paragraph{Reconstruction of $e^{im\theta} g_{m,+} \in \hd_{m,+}$.} Since $\hd_{m,+} = \{e^{im\theta}f(\x),\ f\in L^2( \ker\dbar)\}$, it is clear that a Hilbert orthonormal basis of $\hd_{m,+}$ is $\{e^{im\theta} Z_k\}_{k=0}^\infty$ with $Z_k$ defined in \eqref{eq:Zk}. By Parseval's, we then have  
\begin{align}
    e^{im\theta} g_{m,+} = \sum_{k=0}^\infty \dprod{e^{im\theta} g_{m,+}}{e^{im\theta} Z_k}_{SM} e^{im\theta} Z_k.
    \label{eq:fmrep}
\end{align}
We now explain how to recover the inner products above from known data. Choose as integrating factor $\overline{w_{\overline{a}}}$, defined via Proposition \ref{prop:HIF}, that is, $\overline{w_{\overline{a}}}$ is a fiberwise antiholomorphic, odd solution of $Xw = -a$, with $\overline{w_{\overline{a}}}|_{\partial_+ SM} = \overline{\rho_{\overline{a}}}$. Then the integration by parts formula \eqref{eq:IBPatt} with integrand $g'+g_m$, integrating factor $\overline{w_{\overline{a}}}$ and invariant distribution $\phi = e^{im\theta} \left( \frac{W_k}{\cos\alpha} \right)_\psi = \left( e^{im(\beta+\alpha+\pi)} \frac{W_k}{\cos\alpha} \right)_\psi$ reads:
\begin{align*}
    \dprod{e^{-\overline{w_{\overline{a}}}}(g'+g_m)}{\phi}_{SM} = \dprod{e^{-\overline{\rho_{\abar}}} I_a (g'+g_m)}{(-1)^m e^{im(\beta+\alpha)} W_k}_{\partial_+ SM}.
\end{align*}
The crucial observation now comes from simplifying the left-hand side above by considerations of harmonic content: $e^{-\overline{w_{\overline{a}}}}$ is antiholomorphic and of the form $e^{-\overline{w_{\overline{a}}}} = 1 + \O_{<0}$, so if $m\ge 2$, $e^{-\overline{w_{\overline{a}}}}(g + g_m) = e^{im\theta} g_{m,+} + \O_{<m}$ and $\phi = e^{im\theta} Z_k + \O_{>m}$, so the left-hand side in the last equation simplifies into
\begin{align*}
    \dprod{e^{-\overline{w_{\overline{a}}}}(g'+g_m)}{\phi}_{SM} = \dprod{e^{im\theta} g_{m,+}}{e^{im\theta} Z_k}_{SM}.
\end{align*}
The result is still true when $m=1$ and $g' = g_0 + X_\perp g_s = g_0 - \frac{1}{i} e^{-i\theta} \dbar g_s + \frac{1}{i} e^{i\theta} \partial g_s$, in which case the only potentially troublesome term $\dprod{e^{i\theta} \partial g_s}{e^{i\theta}Z_k}_{SM}$ is still zero, as can be seen by integrating by parts on $M$. 

Combining these observations with \eqref{eq:fmrep}, we obtain a reconstruction formula for $g_{m,+}$ given by 
\begin{align}
    g_{m,+} = \sum_{k=0}^\infty \dprod{e^{-\overline{\rho_{\abar}}} I_a (g'+g_m)}{(-1)^m e^{im(\beta+\alpha)}  W_k}_{\partial_+ SM} Z_k.
    \label{eq:fmrecons}
\end{align}
Using the kernel $\G$ defined in \eqref{eq:Green}, we can also write the following integral representation: 
\begin{align}
    g_{m,+} (\x) = (-1)^m \int_{\partial_+ SM} e^{-\overline{\rho_{\abar}}} I_a (g'+g_m) (\beta,\alpha) e^{-im (\beta+\alpha)} \G (x+iy; \beta,\alpha) \ d\beta\ d\alpha.
    \label{eq:fmrecons_int}
\end{align}

\begin{remark}
    In the case of zero attenuation, writing $f_m(\x,\theta) = g_{m,+}(\x)e^{im\theta}$, the reconstruction formula \eqref{eq:fmrecons_int} gives 
    \begin{align*}
	f_m (\x,\theta) = (-1)^m e^{im\theta} \int_{\partial_+ SM} I f_m (\beta,\alpha) e^{-im (\beta+\alpha)} \G (x+iy; \beta,\alpha) \ d\beta\ d\alpha.
    \end{align*}
    With the notation of the current paper, we then recover, via a different way, the formulas presented in \cite[Theorem 2.4]{Monard2015a}.
\end{remark}

\paragraph{Reconstruction of $e^{-im\theta} g_{m,-}\in \hd_{m,-}$.} We have the following obvious identity
\begin{align*}
    \overline{I_a (g'+g_m)} = I_{\overline{a}} (\overline{g'} + \overline{g_m}),
\end{align*}
where $\overline{g'}\in L^2_{(m-1)}(SM)$ and $\overline{g_m}\in \hd_m(M)$, with decomposition 
\begin{align*}
    \overline{g_m} = e^{-im\theta} \overline{g_{m,+}} + e^{im\theta} \overline{g_{m,-}}. 
\end{align*}
Using the formulas above, we can then reconstruct $\overline{g_{m,-}}$ from $I_{\overline{a}} (\overline{g'} + \overline{g_m})$ via \eqref{eq:fmrecons_int} {\it mutatis mutandis}: 
\begin{align*}
    \overline{g_{m,-}} (\x) = (-1)^m \int_{\partial_+ SM} e^{-\overline{\rho_{a}}} I_{\overline{a}} (\overline{g'} + \overline{g_m}) (\beta,\alpha)  e^{-im (\beta+\alpha)} \G (x+iy; \beta,\alpha) \ d\beta\ d\alpha.
\end{align*}
Complex-conjugating, we arrive at: 
\begin{align}
    g_{m,-}(\x) = (-1)^{m} \int_{\partial_+ SM} e^{-\rho_{a}} I_a (g'+g_m)(\beta,\alpha)  e^{im (\beta+\alpha)} \overline{\G (x+iy; \beta,\alpha)} \ d\beta\ d\alpha.
    \label{eq:fmrecons_int3}
\end{align}

\begin{remark}[Speed-up of formula \eqref{eq:fmrecons_int}]
    As it stands, formula \eqref{eq:fmrecons_int} can be sped up, noticing that $\G(z; \beta,\alpha) = \frac{Id+\SS_A^*}{2} \left( \frac{e^{i\alpha}}{2\pi^2 (1-ze^{-i\beta})^2} \right)$. Since $(Id + \SS_A^*)/2$ is self-adjoint, equation \eqref{eq:fmrecons_int} becomes
    \begin{align*}
	g_{m,+} (\x) &= \frac{(-1)^m}{2\pi^2} \int_{\partial_+ SM} \frac{Id + \SS_A^*}{2} \left( e^{-\overline{\rho_{\abar}}} I_a (g'+g_m) e^{-im (\beta+\alpha)} \right) \frac{e^{i\alpha}}{(1-(x+iy)e^{-i\beta})^2} \ d\beta\ d\alpha, \\
	&= \frac{(-1)^m}{2\pi^2} \int_0^{2\pi} \frac{e^{-im\beta}}{(1-(x+iy)e^{-i\beta})^2} \int_{-\frac{\pi}{2}}^{\frac{\pi}{2}} \frac{Id + \SS_A^*}{2} \left( e^{-\overline{\rho_{\abar}}} I_a (g'+g_m)   e^{i(-m+1)\alpha} \right)\ d\alpha\ d\beta,
    \end{align*}
    (similarly for \eqref{eq:fmrecons_int3}), where the integral in $\alpha$ does not depend on $\x$. This is significantly faster than \eqref{eq:fmrecons_int}, which requires integrating against the non-separable kernel $\G(z; \beta,\alpha)$. 
\end{remark}

\todo[inline]{Write down the real-valued case.}

\subsection{Reconstruction of $g_0$ and $g_s$} \label{sec:g0gs}

The approach above showed that we can reconstruct the ``residual terms'' first, from highest order to lowest. After their forward transform is successively removed from the data, we are them left with reconstructing $(g_0,g_s)$ from $I_a [g_0 + X_\perp g_s]$, which is the purpose of this section. Such an inversion method was first proposed in the context of simple surfaces in \cite{Assylbekov2017}, though the present Euclidean case allows for even more expliciteness, and we will repeat the arguments here for completeness. With the right inverse $P^\dagger$ of $P$ constructed in Section \ref{sec:boundary}, we first construct a so-called {\em holomorphization} operator, adapting \cite[Proposition 6.1]{Assylbekov2017}. Define the operator $\Bh :L^2(SM) \to L^2(\partial_+ SM)$\footnote{It is also proved in \cite{Assylbekov2017} to be a mapping $\Bh:C^\infty(SM)\to C^\infty(\partial_+ SM)$.} by
\begin{align}
    \Bh h := \frac{1}{2} \left[ (Id - iH) h + i (Id + iH) (A_+ P^\dagger A_-^\star (Id-iH)h) \right]|_{\partial_+ SM}, \qquad h\in L^2(\partial SM).
    \label{eq:B}
\end{align}
Then we have the following

\begin{proposition}\label{prop:holomorphization} The operator $\Bh$ defined in \eqref{eq:B} satisfies: for any $f\in C^\infty(SM)$ and any smooth solution $u$ of $Xu = -f$, the function $\uh := u - (\Bh (u|_{\partial SM}))_\psi$ satisfies $X\uh = -f$ and 
    \begin{enumerate}
	\item If $f = \O_{\ge -1}$, then $\uh$ is holomorphic.
	\item If, additionally $f_{-1} = 0$, then $\uh_0=0$. 
    \end{enumerate}
    Similarly, defining $\Ba h := \overline{\Bh (\overline h)}$, the function $\ua := u - (\Ba (u|_{\partial SM}))_\psi$ satisfies $X\ua = -f$ and 
    \begin{enumerate}
	\item[1'.] If $f = \O_{\le 1}$, then $\ua$ is anti-holomorphic.
	\item[2'.] If, additionally $f_{1} = 0$, then $\ua_0 =0$. 
    \end{enumerate}    
\end{proposition}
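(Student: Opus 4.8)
The plan is to verify directly that $\uh := u - (\Bh(u|_{\partial SM}))_\psi$ solves the same transport equation and then to chase harmonic content fiber by fiber. Since $\Bh(u|_{\partial SM})$ is a boundary function and $(\cdot)_\psi$ produces an invariant function (i.e.\ $X(h_\psi)=0$), we get $X\uh = Xu = -f$ for free. The substance is in the two holomorphicity/average claims, which I would handle by computing $(Id+iH)\uh$ and $\uh_0$ using the structure of $\Bh$ in \eqref{eq:B}.

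First I would record the two defining features that make $\Bh$ a ``holomorphization'': writing $h = u|_{\partial SM}$, one checks that $(Id+iH)(\Bh h)_\psi$ and $(Id-iH)(\Bh h)$ match the obstruction to $u$ being holomorphic. Concretely, the antiholomorphic part $(Id-iH)u$ satisfies a transport equation $X[(Id-iH)u] = -(Id-iH)f - 2i[X_\perp u_0 + (X_\perp u)_0]$ via the commutator \eqref{eq:commutator}; when $f = \O_{\ge -1}$, the negative modes of $(Id-iH)u$ are driven only by the $\O_{-1}$ part of $f$ together with the $X_\perp$ terms, whose fiberwise averages live in $H_0$ and whose backprojection is precisely what $P^\dagger A_-^\star$ is designed to invert (Lemma \ref{lem:Pdagger} and Proposition \ref{prop:FBP}). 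So the second summand in \eqref{eq:B}, namely $i(Id+iH)(A_+ P^\dagger A_-^\star (Id-iH)h)$, when extended by $(\cdot)_\psi$, exactly cancels the nonpositive harmonic content of $(Id-iH)u$ away from the $0$-mode; meanwhile the first summand $\frac12(Id-iH)h$ supplies the $0$-mode correction. I would then conclude claim~1 by showing $(Id+iH)\uh = 0$: the fiberwise-holomorphic part of $u$ is untouched (it is annihilated by $(Id+iH)$ applied to the first term and $(Id+iH)u$ contributes nothing new), and the nonpositive modes are removed by construction. For claim~2, the extra hypothesis $f_{-1}=0$ kills the remaining source term in the transport equation for $\uh_0$, and combined with the boundary matching built into the $\frac12(Id-iH)h$ term one gets $\uh_0 = 0$; here I would use that $X\uh_0 = -f_0$ is not quite it — rather, $\uh$ holomorphic plus $X\uh=-f$ with $f_{-1}=0$ forces, via the $e^{-i\theta}$ mode of the transport equation, $\dbar \uh_0 = 0$, and then the boundary condition $\uh_0|_{\partial M}$ (read off from $\Bh$) vanishes, so $\uh_0 \equiv 0$ by uniqueness for $\dbar$. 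The primed statements $1'$ and $2'$ follow by complex conjugation, since $\Ba h = \overline{\Bh\bar h}$ and conjugation swaps holomorphic with antiholomorphic and $\O_{\ge -1}$ with $\O_{\le 1}$.

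The main obstacle I anticipate is bookkeeping the boundary operators correctly: one must track how $A_\pm$, $A_\pm^\star$, $H_\pm$ and the scattering relation interact when restricting $(\cdot)_\psi$ back to $\partial SM$ (e.g.\ $(h_\psi)|_{\partial SM} = A_+ h$ for the even part, with the analogous odd statement), and verify that $P^\dagger$ really does invert $P$ on the relevant range $\V_{+,0}\oplus\V_{-,\perp}$ where the data actually lands — this is where one leans on Lemma \ref{lem:Pdagger} and the range identifications $\mathrm{Range}\,I_0 = \mathrm{Range}\,P_-$, $\mathrm{Range}\,I_\perp = \mathrm{Range}\,P_+$. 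The rest is the harmonic-content argument via \eqref{eq:commutator}, which is routine once the boundary-operator identities are in place; I would cite \cite[Proposition 6.1]{Assylbekov2017} for the structural template and adapt the computation to the explicit Euclidean $P^\dagger$ of Lemma \ref{lem:Pdagger}.
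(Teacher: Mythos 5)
Your overall strategy is the paper's: split off the fiberwise antiholomorphic part of $u$ via $(Id-iH)$, use the commutator \eqref{eq:commutator} to see what transport equation it satisfies, and recognize the second summand of $\Bh$ as the device that restores holomorphicity. But there is a genuine gap at the central step, namely how the $H_{-1}$ source term is disposed of. When $f=\O_{\ge -1}$, the equation for $u^{(-)}:=(Id-iH)u$ reads $Xu^{(-)}=-2f_{-1}-f_0+i(X_\perp u)_0+iX_\perp u_0$, and the term $2f_{-1}$ is a general one-form, \emph{not} of the ``function $+\,X_\perp(\text{function})$'' type to which Proposition \ref{prop:special} and the pseudo-inverse $P^\dagger$ apply (recall $P^\dagger$ is only a right inverse of $P$ on $\V_{+,0}\oplus\V_{-,\perp}=\mathrm{Range}\,I_0\oplus\mathrm{Range}\,I_\perp$). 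The mechanism your sketch never supplies is the Hodge decomposition $2f_{-1}=Xg+X_\perp h$ with $g\in H^1_0(M)$ and $h\in\dot{H}^1(M)$: the potential part is absorbed into the unknown as $u^{(-)}+g$ without changing its boundary trace (since $g|_{\partial M}=0$), and the residual source $(f_0-i(X_\perp u)_0)+X_\perp(h-iu_0)$ is then exactly of the admissible form, so that $A_-^*(u^{(-)}|_{\partial SM})$ lands in the subspace where $P^\dagger$ inverts $P$. Without this step your assertion that the data is ``precisely what $P^\dagger A_-^\star$ is designed to invert'' does not go through.

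The same omission undermines your argument for Claim 2. The functions $g,h$ above are exactly what control the fiberwise average: one finds $\uh_0=-\tfrac12(g+ih)$, which vanishes precisely because $f_{-1}=0$ forces $g=h=0$. Your alternative route ($\dbar\uh_0=0$ from the $H_{-1}$ projection of $X\uh=-f$, then uniqueness from $\uh_0|_{\partial M}=0$) is sound in its first half, but the boundary assertion is exactly the point at issue: in general $\uh_0|_{\partial M}=-\tfrac{i}{2}\,h|_{\partial M}\neq 0$, and showing it vanishes under the hypothesis $f_{-1}=0$ requires identifying $h$ --- i.e., the very computation you skipped. I would also note that $\uh$ is holomorphic not because the correction ``cancels the nonpositive harmonic content of $(Id-iH)u$,'' but because each of the three pieces $u^{(+)}$, $-g$ (a zeroth harmonic) and $u'$ (holomorphic by Proposition \ref{prop:special}) is separately holomorphic, while the complementary combination $\tfrac12(u^{(-)}+g-u')$ is flow-invariant and is identified with $(\Bh(u|_{\partial SM}))_\psi$.
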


\begin{proof}[Proof of Proposition \ref{prop:holomorphization}] 
    Let $f\in C^\infty(SM)$ and $u$ a solution of $Xu = -f$. Write 
    \begin{align*}
	u = \frac{1}{2} (u^{(+)} + u^{(-)}), \qquad u^{(\pm)} := (Id \pm iH) u,
    \end{align*}
    where $(Id - iH) u$ solves the PDE
    \begin{align*}
	X(Id - iH) u = (Id-iH) Xu + [X,Id-iH] u = -2f_{-1} - f_0 + i (X_\perp u)_0 + i X_\perp u_0.
    \end{align*}
    Applying the Hodge decomposition to the one-form $2f_{-1}$, there exists $g\in H^1_0(M)$ and $h\in \dot{H}^1(M)$ such that $2f_{-1} = Xg + X_\perp h$, in which case the previous equation can be rewritten as
    \begin{align*}
	X (u^{(-)} + g) = - (f_0 - i(X_\perp u)_0) - X_\perp (h - iu_0). 
    \end{align*}
    Upon integrating along each line, we make appear
    \begin{align}
	A_-^* (u^{(-)}|_{\partial SM}) = A_-^* (u^{(-)} + g)|_{\partial SM} = I ( (f_0 - i (X_\perp u)_0) + X_{\perp} (h-iu_0) ). 
	\label{eq:Amstar}
    \end{align}
    This motivate that we define 
    \[ u' = - i(Id + iH) (P^\dagger A_-^* (u^{(-)}|_{\partial SM}) )_\psi = - i(Id + iH) (P^\dagger I ( (f_0 - i (X_\perp u)_0) + X_{\perp} (h-iu_0) ).  \]
    $u'$ is holomorphic and by virtue of Proposition \ref{prop:special}, $u'$ also solves
    \begin{align*}
	Xu' = - (f_0 - i (X_\perp u)_0) - X_\perp (h-iu_0).
    \end{align*}
    and $u'_0 = -ih -u_0$. We then rewrite $u$ as 
    \begin{align*}
	u = \frac{1}{2} (u^{(+)} - g + u') + \frac{1}{2} (u^{(-)} + g - u'),
    \end{align*}
    where the first summand $\uh := \frac{1}{2} (u^{(+)} - g + u')$ is holomorphic, and where the second summand satisfies $X \left( \frac{1}{2}(u^{(-)} + g - u')\right) = 0$, so that it is equal to some $h_\psi$, where $h = \frac{1}{2} (u^{(-)} + g - u')|_{\partial_+ SM} = \Bh (u|_{\partial SM})$ by construction. Therefore, Claim 1 is proved. As for Claim 2, if $f_{-1} = 0$, then the Hodge decomposition above becomes $h = g = 0$, and we read
    \[ 2 \uh_0 = (u^{(+)} - g + u')_0 = u_0 - g - ih - u_0 = 0. \]
    Thus Proposition \ref{prop:holomorphization} is proved. 
\end{proof}

Out of the holomorphization operator, we are able to derive reconstruction formulas for $g_0$ and $g_s$. Such formulas were derived in \cite{Assylbekov2017} and we repeat the proof here for completeness. 

\begin{theorem}[Reconstruction of $(g_0,g_s)$]\label{thm:inversionfh0} Let $a\in C^\infty(\Dm)$. Define $w_a$ and $\overline{w_{\overline{a}}}$ following Eq. \eqref{eq:wa}, $\rho_a:= w_a|_{\partial_+ SM}$ and $\overline{\rho_{\overline{a}}}:= \overline{w_{\overline{a}}}|_{\partial_+ SM}$, and let $\Bh$ and $\Ba$ as above. Then the functions $(g_s,g_0)\in H^1_0(M)\times L^2(M)$ can be reconstructed from data $\I := I_{a} (g_0 + X_\perp g_s)$ (extended by zero on $\partial_- SM$) via the following formulas:
    \begin{align*}
	g_0 &= -\eta_+ \Dh_{-1} - \eta_- \Da_1 - \frac{a}{2} \left( \Dh_0 + \Da_0 + i(g_+ - g_-) \right), \\
	g_s &= \frac{1}{2} (g_+ + g_-) - \frac{i}{2} (\Dh_0 - \Da_0),
    \end{align*}
    where we have defined $\Dh := e^{w_a} (\Bh(\I e^{-\rho_a}))_\psi$, $\Da := e^{\overline{w_{\overline{a}}}} (\Ba(\I e^{-\overline{\rho_{\overline{a}}}}))_\psi$, and where $\partial g_+ = \dbar g_- = 0$, so that $g_\pm$ are expressed as Cauchy formulas in terms of their boundary conditions
    \begin{align*}
	g_+|_{\partial M} = -i (\I - \Dh|_{\partial SM})_0, \qquad g_-|_{\partial M} = i (\I - \Da|_{\partial SM})_0.
    \end{align*}
\end{theorem}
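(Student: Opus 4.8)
The plan is to start from the transport equation $Xu + au = -(g_0 + X_\perp g_s)$ with $u|_{\partial_- SM} = 0$, so that $\I = I_a(g_0 + X_\perp g_s) = u|_{\partial_+ SM}$. First I would gauge away the attenuation: using the fiberwise holomorphic integrating factor $w_a$ from Proposition \ref{prop:HIF} with $Xw_a = -a$ and $w_a|_{\partial_+ SM} = \rho_a$, set $\tilde u := u e^{-w_a}$, which solves $X\tilde u = -(g_0 + X_\perp g_s)e^{-w_a}$ with $\tilde u|_{\partial_+ SM} = \I e^{-\rho_a}$. The right-hand side $f := (g_0 + X_\perp g_s)e^{-w_a}$ has angular content: since $X_\perp g_s = \frac{1}{i}(\eta_+ - \eta_-)g_s$ has only modes $\pm 1$, and $e^{-w_a} = 1 + \O_{\ge 1}$ is fiberwise holomorphic, the negative harmonics of $f$ are exactly $f_{-1} = \frac{-1}{i}\eta_- g_s = i\,\eta_- g_s$ (coming from the $X_\perp g_s$ term times the constant part of $e^{-w_a}$), so $f = \O_{\ge -1}$ with $f_{-1} = i\,\eta_- g_s = i e^{-i\theta}\dbar g_s$. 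Thus Proposition \ref{prop:holomorphization} applies: $\uh := \tilde u - (\Bh(\tilde u|_{\partial SM}))_\psi = \tilde u - (\Bh(\I e^{-\rho_a}))_\psi$ is fiberwise holomorphic and solves $X\uh = -f$. Undoing the integrating factor, $\Dh := e^{w_a}\uh = u - e^{w_a}(\Bh(\I e^{-\rho_a}))_\psi$ satisfies $X\Dh + a\Dh = -(g_0 + X_\perp g_s)$ and, being $e^{w_a}$ (holomorphic, $=1+\O_{\ge 1}$) times a holomorphic function, $\Dh$ is itself fiberwise holomorphic. Symmetrically, using $\overline{w_{\overline a}}$ (fiberwise antiholomorphic, $Xw = -a$) and $\Ba$, one gets $\Da := e^{\overline{w_{\overline a}}}(\Ba(\I e^{-\overline{\rho_{\overline a}}}))_\psi$ with $u - \Da$ antiholomorphic solving the same attenuated equation; so $\Da$ is antiholomorphic and $X\Da + a\Da = -(g_0 + X_\perp g_s)$... wait, more precisely $\Da$ plays the dual role, being the antiholomorphic gauge representative.

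Next I would extract the equations mode by mode. From $\Dh$ holomorphic with $X\Dh + a\Dh = -(g_0 + X_\perp g_s)$, write $X\Dh = \eta_+\Dh + \eta_-\Dh$ and collect harmonics. Since $\Dh = \Dh_0 + \Dh_1 + \Dh_2 + \cdots$, the mode $-1$ of the equation reads $\eta_-\Dh_0 = -(X_\perp g_s)_{-1} = -\frac{1}{i}(-\eta_- g_s) = -i\,\eta_- g_s$... I need to be careful with signs here; more robustly, the mode $0$ gives $\eta_+\Dh_{-1} + \eta_-\Dh_1 + a\Dh_0 = -g_0$, but $\Dh_{-1}=0$ since $\Dh$ is holomorphic — no: holomorphic means $(Id+iH)\Dh=0$, i.e. only modes $\ge 0$ (with the sign convention in the paper), so $\Dh_{-1}=0$. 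Then I would instead use both $\Dh$ and $\Da$ together. The key algebraic step: the difference $\Dh - \Da$ solves $X(\Dh-\Da) + a(\Dh-\Da) = 0$, and comparing fiberwise averages with the transport solution $u$ gives the boundary data for $g_\pm$. Specifically, define $g_+ := u_1 e^{-i\theta}\cdot(\ldots)$ — rather, the standard move (as in \cite{Assylbekov2017}) is: since $\uh$ is holomorphic and $\uh_0$ relates to $g_s$, one recovers $g_s$ from $\Dh_0, \Da_0$ and the mode-$0$ parts; and $g_\pm$ are the holomorphic/antiholomorphic functions whose boundary values are forced by $\I - \Dh|_{\partial SM}$ and $\I - \Da|_{\partial SM}$ at fiberwise average level, because $(\Bh(\tilde u|_{\partial SM}))_\psi$ has a computable fiberwise average via Proposition \ref{prop:holomorphization} Claim 2 once we note $f_{-1} = i\eta_- g_s \ne 0$ in general, which is precisely why $\uh_0 \ne 0$ and encodes $g_s$.

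Then I would assemble: the mode-$0$ component of the equation $X\Dh + a\Dh = -(g_0 + X_\perp g_s)$ is $\eta_+\Dh_{-1} + \eta_-\Dh_1 + a\Dh_0 = -g_0$, and similarly from $\Da$ (antiholomorphic, so $\Da_{\ge 1}=0$, $\Da = \Da_0 + \Da_{-1} + \cdots$) the mode $0$ is $\eta_+\Da_{-1} + \eta_-\Da_1 + a\Da_0 = -g_0$ — but only one of $\Dh, \Da$ contributes each of $\eta_+(\cdot)_{-1}$, $\eta_-(\cdot)_1$. Combining the two (averaging, and using $\Dh_{-1}=0=\Da_1$) yields $g_0 = -\eta_+\Dh_{-1} - \eta_-\Da_1 - \frac{a}{2}(\Dh_0 + \Da_0) - \frac{a}{2}(\text{mode-0 remainder})$, where the remainder is expressed through the holomorphic/antiholomorphic residual functions $g_\pm$ as $i(g_+ - g_-)$ — here $g_\pm$ arise as $\Dh_0 - (\text{transport }u_0)$-type corrections, satisfying $\partial g_+ = \dbar g_- = 0$ because $X(\Dh - u)$ and $X(\Da-u)$ vanish on the relevant modes after the attenuation is accounted for, so their fiberwise-average components are (anti)holomorphic, and their boundary traces are $-i(\I - \Dh|_{\partial SM})_0$ and $i(\I - \Da|_{\partial SM})_0$ respectively, recovered by the Cauchy integral formula on the disc. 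Finally $g_s$ comes from $\uh_0 = -i f_{-1}$-primitive type relation: since $X\Dh = \eta_+\Dh + \eta_-\Dh$ and the mode $-1$ of $-(g_0+X_\perp g_s)$ is $-\frac1i(-\eta_- g_s) = i\eta_- g_s$... the upshot after matching with $\Da$ is $g_s = \frac12(g_+ + g_-) - \frac{i}{2}(\Dh_0 - \Da_0)$.

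The main obstacle I expect is bookkeeping the fiberwise averages and the appearance of the residual (anti)holomorphic functions $g_\pm$: one must verify carefully that $\Dh - u$ (resp. $\Da - u$), after removing the integrating factor, has a fiberwise average that is genuinely holomorphic (resp. antiholomorphic) — this uses Proposition \ref{prop:holomorphization} Claim 1 together with the fact that $e^{w_a}$ is holomorphic and $=1+\O_{\ge 1}$, so it does not spoil the mode-$0$ and mode-$(-1)$ structure — and that the boundary values of these averages are exactly $\mp i(\I - \Dh|_{\partial SM})_0$ and $\pm i(\I - \Da|_{\partial SM})_0$, which follows from $u|_{\partial_+ SM} = \I$ and $(\cdot)_\psi|_{\partial SM}$ restricting correctly. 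Once the holomorphic/antiholomorphic splitting and these boundary identifications are pinned down, Cauchy's formula recovers $g_\pm$ and the mode-$0$/mode-$(\pm1)$ identities of the attenuated transport equation give $g_0$ and $g_s$ as stated.
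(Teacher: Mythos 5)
Your overall strategy is the right one and is essentially the paper's: conjugate by the holomorphic integrating factor $e^{-w_a}$, apply the holomorphization operator of Proposition \ref{prop:holomorphization}, conjugate back, repeat antiholomorphically, then read off $g_s$ and $g_0$ from the $H_{\pm 1}$ and $H_0$ projections of the transport equation, with the residual (anti)holomorphic functions $g_\pm$ pinned down by their boundary traces via Cauchy's formula. However, there is a genuine error in the execution that derails the derivation of $g_0$: you conflate the correction term $\Dh := e^{w_a}(\Bh(\I e^{-\rho_a}))_\psi$ (the object in the theorem, computable from data) with the holomorphized solution $\uh := u - \Dh$. It is $\uh$, not $\Dh$, that is fiberwise holomorphic and solves $X\uh + a\uh = -g_0 - X_\perp g_s$; likewise $\ua = u - \Da$ is antiholomorphic, while $\Da$ itself is not. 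Consequently your claim ``$\Dh_{-1}=0=\Da_1$'' is false for the theorem's $\Dh,\Da$; what is true is $\uh_{-1} = u_{-1} - \Dh_{-1} = 0$ and $\ua_1 = u_1 - \Da_1 = 0$, i.e.\ $u_{-1} = \Dh_{-1}$ and $u_1 = \Da_1$. This is precisely the step that converts the unknown interior modes $u_{\pm 1}$ into data-computable quantities and puts the (generally nonzero) terms $-\eta_+\Dh_{-1}-\eta_-\Da_1$ into the formula for $g_0$. As written --- keeping those two terms while asserting they vanish --- your argument is self-contradictory and would collapse to $g_0 = -\tfrac{a}{2}(\cdots)$, which is clearly wrong (take $a=0$).

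The correct bookkeeping is: project $Xu+au = -g_0 - X_\perp g_s$ onto $H_0$ to get $g_0 = -\eta_+ u_{-1} - \eta_- u_1 - a u_0$, substitute $u_{-1}=\Dh_{-1}$ and $u_1 = \Da_1$, and express $u_0 = \tfrac12(\uh_0+\Dh_0+\ua_0+\Da_0)$ using the relations obtained by projecting the $\uh$-equation onto $H_{-1}$ and the $\ua$-equation onto $H_{1}$, namely $\eta_-\uh_0 = \tfrac1i \eta_- g_s$ and $\eta_+\ua_0 = -\tfrac1i\eta_+ g_s$. These give $g_s - i\uh_0 = g_+$ and $g_s + i\ua_0 = g_-$ with $g_\pm$ (anti)holomorphic; since $g_s|_{\partial M}=0$, the functions $g_\pm$ are determined from $(\I - \Dh|_{\partial SM})_0$ and $(\I-\Da|_{\partial SM})_0$ by Cauchy's formula, the half-sum yields the stated expression for $g_s$, and $\uh_0+\ua_0 = i(g_+-g_-)$ completes the formula for $g_0$. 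These $H_{\pm1}$ projections and the boundary identification of $g_\pm$ are exactly the steps you left as ``the upshot after matching''; they must be carried out for the proof to close.
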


\todo[inline]{The $g_\pm$ terms pretty much imply that their boundary traces satisfy some conditions w.r.t. the Hilbert transform of the domain. This is reminiscent of \cite[Lemma 4.1]{Kazantsev2007}}

\begin{remark} Another way to view the formula for $g_s$ is as the orthogonal projection of $- \frac{i}{2} (\Dh_0 - \Da_0)$ onto $H^1_0(M)$, for the inner product 
    \begin{align*}
	(u,v)_{\dot{H}^1(M)} := \int_M \nabla u \cdot \overline{\nabla v}. 
    \end{align*} 
\end{remark}

\begin{proof}[Proof of Theorem \ref{thm:inversionfh0}]
    Since $e^{-w_a}$ is a holomorphic solution of $X u - au =0$, we have
    \begin{align*}
	X (u e^{-w_a}) = - (g_0 + X_\perp g_s) e^{-w_a} = - b (x,v), 
    \end{align*}
    where $b$ is of the form $b_{-1} + b_0 + \O_{\ge 1}$ with, in particular, $b_{-1} = -\frac{1}{i} \eta_- g_s$. Thanks to Proposition \ref{prop:holomorphization}, defining $v:= ue^{-w_a}$, the function $\vh = v - (\Bh(v|_{\partial SM}))_\psi$ is holomorphic and satisfies $X \vh = -b$. Then defining $\uh := e^{w_a} \vh = u - e^{w_a} (\Bh( u e^{-w_a}|_{\partial SM}))_\psi = u - \Dh$, $\uh$ solves the equation 
    \begin{align}
	X \uh + a \uh = -g_0 -X_\perp g_s.    
	\label{eq:transuh}
    \end{align}
    Similarly using $e^{-\overline{w_{\overline{a}}}}$, an antiholomorphic solution of $X u - au =0$, the function $\ua = u - e^{\overline{w_{\overline{a}}}} (\Ba( ue^{-\overline{w_{\overline{a}}}}|_{\partial SM}))_\psi = u-\Da$ is antiholomorphic and solves
    \begin{align}
	X \ua + a \ua = -g_0-X_\perp g_s.
	\label{eq:transua}
    \end{align}
    Projecting \eqref{eq:transuh} onto $H_{-1}$ and \eqref{eq:transua} onto $H_1$, we obtain 
    \begin{align*}
	\eta_- \uh_0 = \frac{1}{i} \eta_- g_s &\qquad \Leftrightarrow \qquad e^{-i\theta}\ \dbar (g_s -i\uh_0) = 0, \\
	\eta_+ \ua_0 = -\frac{1}{i} \eta_+ g_s &\qquad \Leftrightarrow \qquad e^{i\theta}\ \partial (g_s + i\ua_0) = 0.
    \end{align*}
    This implies the relations: 
    \begin{align}
	g_s - i\uh_0 = g_+ \in \ker \partial, \qquad g_s + i\ua_0 = g_- \in \ker \dbar.
	\label{eq:relh0}    
    \end{align}
    which, since $g_s$ vanishes at the boundary, completely determines $g_\pm$ from their boundary values, which are in turn determined from the boundary values of $\uh$ and $\ua$ (via a Cauchy formula), which are known from data. Taking the half-sum, we obtain
    \begin{align*}
	g_s = \frac{1}{2} (g_+ + g_-) - \frac{i}{2} (\uh_0 - \ua_0) = \frac{1}{2} (g_+ + g_-) - \frac{i}{2} (\Dh_0 - \Da_0),
    \end{align*}
    where the right-hand side is completely determined by data. On to the determination of $g_0$, we project the equation $Xu + au = -g_0 - X_\perp g_s$ onto $H_0$ to make appear
    \begin{align*}
	g_0 = -\eta_+ u_{-1} - \eta_- u_1 - a u_0,
    \end{align*}
    and show how to determine each term from the data. Since $\uh$ is holomorphic, then $\uh_{-1} = 0 = u_{-1} - \Dh_{-1}$, so $u_{-1} = \Dh_{-1}$. Since $\ua$ is antiholomorphic, $\ua_1 = 0 = u_1 - \Da_1$, so $u_1 = \Da_1$. Finally, 
    \begin{align*}
	u_0 = \frac{1}{2} (\uh_0 + \Dh_0 + \ua_0 + \Da_0) \stackrel{\eqref{eq:relh0}}{=} \frac{1}{2} (\Dh_0 + \Da_0) + \frac{i}{2} (g_+ - g_-).  
    \end{align*}
    We arrive at the following formula for $g_0$ 
    \begin{align*}
	g_0 = -\eta_+ \Dh_{-1} - \eta_- \Da_1 - \frac{a}{2} \left( \Dh_0 + \Da_0 + i(g_+ - g_-) \right). 
    \end{align*}
    Theorem \ref{thm:inversionfh0} is proved.
\end{proof}

\todo[inline]{Combining all formulas, does the holomorphization operator admit a simple expression ? Or a easy-to-describe action on a given basis ?}

\appendix

\section{Proof of Proposition \ref{prop:Wk}}

\begin{proof}[Proof of Proposition \ref{prop:Wk}] A first calculation shows that 
    \begin{align}
	\begin{split}
	    \frac{u'_{k,k}}{\cos\alpha} = \frac{\sqrt{2}}{\pi} e^{ik\beta} \frac{e^{i(2k+1)\alpha} + (-1)^k e^{-i\alpha}}{e^{i\alpha} + e^{-i\alpha}} &= \frac{\sqrt{2}}{\pi} e^{ik\beta} (-1)^k \frac{1-(-e^{2i\alpha})^{k+1}}{1-(-e^{2i\alpha})} \\
	    &= \frac{\sqrt{2}}{\pi} e^{ik\beta} (-1)^k \sum_{p=0}^k (-1)^p e^{2ip\alpha},
	\end{split}
	\label{eq:firstcal}	
    \end{align}
    thus upon defining 
    \begin{align}
	J_{k,p} (\x) := \frac{1}{2\pi} \int_{\Sm^1} \left( e^{ik\beta} e^{2ip\alpha} \right)_\psi (\x,\theta)\ d\theta, \qquad 0\le p\le k,
	\label{eq:Jkp}
    \end{align}
    we have 
    \begin{align*}
	\left( \left( \frac{u'_{k,k}}{\cos\alpha} \right)_\psi \right)_0 = (-1)^k \frac{\sqrt{2}}{\pi} \sum_{p=0}^k (-1)^p J_{k,p}, \qquad 2 \left( \left( u_{k,k} \right)_\psi \right)_0 = (-1)^k \frac{\sqrt{2}}{\pi} \left( J_{k,0} + (-1)^k J_{k,k} \right). 
    \end{align*}
    The proof will then be complete once we prove that
    \begin{align}
	J_{k,0}(\x) = (-1)^k J_{k,k}(\x) = \frac{1}{2} (x+iy)^k, \quad\text{and} \quad J_{k,p}(\x) = 0, \quad p\notin \{0,k\}.
	\label{eq:WTSprop}
    \end{align}
    We rewrite 
    \begin{align*}
	J_{k,p} (\x) = \frac{1}{2\pi} \int_{\Sm^1} e^{ik\beta_-(\x,\theta)} e^{2ip\ \alpha_-(\x,\theta)}\ d\theta,
    \end{align*}
    where $(\beta_-(\x,\theta),\alpha_-(\x,\theta))\in \partial_+ SM$ denote the fan-beam coordinates of the unique geodesic passing through $(\x,\theta)\in SM$. We compute $(\beta_-, \alpha_-)$ explicitly after parameterizing $\x = \rho e^{i\beta}$ in polar coordinates. To this end, we will also need $\tau(\rho e^{i\beta},\theta)$, the first arrival time to the boundary of the geodesic passing through $(\rho e^{i\beta},\theta)$. We have, by direct computation of the unique positive root $t$ to the equation $|\rho e^{i\beta} + t e^{i\theta}|^2 = 1$, that 
    \begin{align*}
	\tau (\rho e^{i\beta}, \theta) = - \rho \cos(\theta-\beta) + \sqrt{1-\rho^2 \sin^2 (\theta-\beta)} = \tau (\rho,\theta-\beta). 
    \end{align*}
    Then, $\beta_-(\rho e^{i\beta},\theta)$ is such that 
    \begin{align*}
	e^{i\beta_- (\rho e^{i\beta}, \theta)} = \rho e^{i\beta} + \tau (\rho e^{i\beta}, \theta + \pi) e^{i(\theta+\pi)},  
    \end{align*}
    which yields the relation
    \begin{align*}
	e^{i\beta_-(\rho e^{i\beta}, \theta)} = - e^{i\theta} \left( \sqrt{1-\rho^2 \sin^2 (\theta-\beta)} + i \rho \sin (\theta-\beta) \right).
    \end{align*}
    On to the calculation of $\alpha_- (\x,\theta)$, recalling that $\theta = \beta_- + \alpha_- + \pi$ and using the previous relation, we arrive at
    \begin{align*}
	e^{i\alpha_-( \rho e^{i\beta}, \theta)} = - e^{i (\theta-\beta_-( \rho e^{i\beta}, \theta))} = \left( \sqrt{1-\rho^2 \sin^2 (\theta-\beta)} + i \rho \sin (\theta-\beta) \right)^{-1}. 
    \end{align*}
    On to the computation of $J_{k,p}$ where we first restrict to $k-2p\ge 0$:
    \begin{align*}
	2\pi J_{k,p} (\rho e^{i\beta}) &= \int_{\Sm^1} e^{ik \beta_-(\rho e^{i\beta}, \theta)} e^{2ip\ \alpha_-(\x,\theta)}\ d\theta \\
	&= (-1)^k \int_{\Sm^1} e^{ik\theta} \left( \sqrt{1-\rho^2 \sin^2 (\theta-\beta)} + i \rho \sin (\theta-\beta) \right)^{k-2p}\ d\theta \\
	&= (-1)^k e^{ik\beta} \int_{\Sm^1} e^{ik\theta} \left( \sqrt{1-\rho^2 \sin^2 \theta} + i \rho \sin \theta \right)^{k-2p}\ d\theta \\
	&= (-1)^k e^{ik\beta} \sum_{q=0}^{k-2p} \binom{k-2p}{q} \int_{\Sm^1} e^{ik\theta} (1-\rho^2 \sin^2 \theta)^{\frac{q}{2}} (i \rho \sin \theta)^{k-2p-q}\ d\theta. 
    \end{align*}
    In the sum above, the integrands are odd w.r.t. $\theta\mapsto \theta+\pi$ whenever $q$ is odd, and therefore these terms vanish. In addition, when $q$ is even, the harmonic content of $(1-\rho^2 \sin^2 \theta)^{\frac{q}{2}} (i \rho \sin \theta)^{k-2p-q}$ lies in the span of $\{ e^{im\theta},\ -k+2p \le m \le k-2p\}$. In particular, if $p\ne 0$ (with $k-2p>0$), the inner product of such an integrand with $e^{ik\theta}$ is zero, and thus $J_{k,p} = 0$. Now, when $p=0$, we compute: 
    \begin{align*}
	J_{k,0} (\rho e^{i\beta}) &= (-1)^k e^{ik\beta} \frac{1}{2\pi} \int_{\Sm^1} e^{ik\theta} \left( \sqrt{1-\rho^2 \sin^2 \theta} + i \rho \sin \theta \right)^{k}\ d\theta \\
	&= (-1)^k e^{ik\beta}  c_{-k} \left( \left(\sqrt{1-\rho^2\sin^2\theta} + i\rho\sin\theta\right)^k \right) \\
	&= (-1)^k e^{ik\beta}  \sum_{q=0}^k \binom{k}{q} c_{-k} \left( (1-\rho^2\sin^2\theta)^\frac{q}{2} (i\rho\sin\theta)^{k-q} \right),
    \end{align*} 
    where by $c_{-k}(g)$ we denote the coefficient in front of $e^{-ik\theta}$ in the Fourier series expansion of $g$. Now, for $q$ odd, the function $\theta\mapsto (1-\rho^2\sin^2\theta)^\frac{q}{2} (i\rho\sin\theta)^{k-q}$ has the opposite parity as $k$ w.r.t. the involution $\theta\mapsto \theta+\pi$, so the $c_{-k}$ coefficient vanishes in the case of $q$ odd. If $q$ is even, say $q=2s$, we rewrite
    \begin{align*}
	( 1-\rho^2\sin^2\theta)^\frac{q}{2} (i\rho\sin\theta)^{k-q} = (1-\rho^2\sin^2\theta)^s (i\rho\sin\theta)^{k-2s}.
    \end{align*}
    Expanding the first factor in the right hand side using the binomial formula, we obtain a series in powers of $\sin\theta$, where the only term with a nonzero $c_{-k}$ coefficient is 
    \begin{align*}
	(-\rho^2\sin^2\theta)^s (i\rho\sin\theta)^{k-2s} = (i\rho\sin\theta)^k = \rho^k (-1)^k 2^{-k} e^{-ik\theta} + \sum_{\ell\ne k} c_{\ell} e^{i\ell \theta}.  
    \end{align*}
    We can then write $c_{-k} \left( (1-\rho^2\sin^2\theta)^\frac{q}{2} (i\rho\sin\theta)^{k-q} \right) = (-1)^k \rho^k 2^{-k}\ \frac{1+(-1)^q}{2}$. Back to the calculation of $J_{k,0}$, we then obtain 
    \begin{align*}
	J_{k,0} (\rho e^{i\beta}) &= (-1)^k e^{ik\beta}  \sum_{q=0}^k \binom{k}{q} (-1)^k \rho^k 2^{-k}\ \frac{1+(-1)^q}{2}  \\
	&=  \frac{1}{2} (\rho e^{i\beta})^k 2^{-k} \left( (1+1)^k + (1-1)^k \right) \\
	&= \frac{1}{2} (\rho e^{i\beta})^k.
    \end{align*}
    In order to treat the missing values $k-2p<0$, changing variables $\theta\mapsto \theta+\pi$ in the definition of $J_{k,p}$ and using the fact that 
    \begin{align*}
	(\beta_-(\x,\theta+\pi), \alpha_- (\x,\theta+\pi)) = \SS_A (\beta_- (\x,\theta), \alpha_-(\x,\theta)) = (\beta_-(\x,\theta) + \pi + 2\alpha_- (\x,\theta), -\alpha_- (\x,\theta)),
    \end{align*}
    we arrive at the relation 
    \begin{align*}
	(-1)^p J_{k,p} = (-1)^{k-p} J_{k,k-p}.
    \end{align*}
    In particular, this implies that $(-1)^k J_{k,k} (\rho e^{i\beta}) = J_{k,0} (\rho e^{i\beta}) = \pi (\rho e^{i\beta})^k$, and that for all other values $0<p<k$, $J_{k,p} = 0$. Proposition \ref{prop:Wk} is proved.    
\end{proof}

\bibliographystyle{siam}

\end{document}